\numberwithin{equation}{section}
\newtheorem{theorem}{Theorem}[section]
\newtheorem{lemma}[theorem]{Lemma}
\newtheorem{proposition}[theorem]{Proposition}
\newtheorem{corollary}[theorem]{Corollary}
\theoremstyle{definition}
\newtheorem{remark}[theorem]{Remark}
\newtheorem{remarks}[theorem]{Remarks}
\newtheorem{example}[theorem]{Example}
\newtheorem{examples}[theorem]{Examples}
\newtheorem{definition}[theorem]{Definition}
\newcommand{\pb}{\ar@{}[dr]|{\mbox{\LARGE{$\lrcorner$}}}} 
\newcommand{\lra}{\longrightarrow}
\newcommand{\mk}{\mathbf{k}}
\newcommand{\C}{\mathcal{C}}
\newcommand{\D}{\mathcal{D}}
\newcommand{\M}{\mathcal{M}}
\newcommand{\Cdga}[1]{\mathbf{Cdga}(#1)}
\newcommand{\sMod}{\mathbf{\Sigma Mod}}
\newcommand{\sModc}{\mathbf{\Sigma_{>0} Mod}}
\newcommand{\sModu}{\mathbf{\Sigma_{>1} Mod}}
\newcommand{\lMod}{\mathbf{\Lambda Mod}}
\newcommand{\lModc}{\mathbf{\Lambda_{>0} Mod}}
\newcommand{\lModu}{\mathbf{\Lambda_{>1} Mod}}
\newcommand{\Op}{\mathbf{Op}}
\newcommand{\dgVect}{\mathbf{dgVect_k}}
\newcommand{\gVect}{\mathbf{gVect_k}}
\newcommand{\tree}{\mathbf{Tree}}
\newcommand{\dirlim}{\underset{\longrightarrow}{\mathrm{colim}}\,}
\newcommand{\Aut}{\mathrm{Aut}\,}
\newcommand{\im}{\mathrm{im}\,}
\newcommand{\id}{\mathrm{id}}
\newcommand{\Lie}{\mathcal Lie}
\newcommand{\Com}{\mathcal Com}
\newcommand{\Ass}{\mathcal Ass}
\newcommand{\Ger}{\mathcal Ger}
\newcommand{\Mag}{\mathcal Mag}
\newcommand{\under}[1]{\underline{#1}}
\begin{document}

\title{Up-to-homotopy algebras with strict units}

\author{Agust\'{\i} Roig}
\address[A. Roig]{Dept. Matemàtiques \\ Universitat Polit\`{e}cnica de Catalunya, UPC and BGSMath \\ Diagonal 647, 08028 Barce\-lo\-na, Spain. https://orcid.org/0000-0002-4589-8075 }

\thanks{ Roig partially supported by projects MTM2015-69135-P(MINECO/FEDER), 2014SGR634 and 2017SGR932}

\keywords{Up-to-homotopy algebra, operad, operad algebra, minimal model}
\subjclass[2010]{18D50, 55P62}

\begin{abstract} We prove the existence of minimal models à la Sullivan for operads with nontrivial arity zero. So up-to-homotopy algebras with strict units are just operad algebras over these minimal models. As an application, we give another proof of the formality of the \emph{unitary} $n$-little disks operad over the rationals.
\end{abstract}

\date{\today}
\maketitle
\tableofcontents

\section{Introduction}

\subsection{} In the beginning, in Stasheff's seminal papers \cite{Sta63}, $A_\infty$-spaces (algebras) had points (units) in what was subsequently termed the zero arity of the operad $\Ass$.\footnote{Therefore, in our notation, we should write it as $u\Ass$, or $\Ass_+$: the \emph{unitary} associative operad.} Stasheff called them \emph{degenerations}. They were still present in \cite{May72} and \cite{BoVo73}, for instance. However, after that, points or units generally disappeared and for a while people working with operads assumed as a starting point $P(0) = \emptyset$, in the topological setting, or $P(0) = 0$ in the algebraic one: see for instance \cite{GK94}. This may have been caused because of the problems posed by those points (units), including

\begin{itemize}
  \item[(1)] \cite{Hin03} had to correct his paper \cite{Hin97} about the existence of a model structure in the category of operads of complexes over an arbitrary commutative ring, excluding the arity zero of the operads---or to consider just the case of characteristic zero.
  \item[(2)] \cite{Bur18} explains how the bar construction of a dg associative algebra with unit is homotopy equivalent to the trivial coalgebra, thus destroying the usual bar-cobar construction through which one usually builds minimal models for operads in Koszul duality theory.
  \item[(3)] \cite{Mar96} (see also \cite{MSS02}) constructs minimal models for operads of chain complexes over a field of zero characteristic, carefully excluding operads with non-trivial arity zero. This allows him to implicitly replace the somewhat \lq\lq wild" general free operad $\Gamma(M)$ for the tamer one that we denote by $\Gamma_0(M)$.
\end{itemize}

More recently, the situation changed, and people have turned their efforts to problems involving non-trivial arity zero. In the topological context, we have the works \cite{MT14}, or \cite{FTW18}, for instance. In the algebraic context we can mention \cite{FOOO09a}, \cite{FOOO09b}, \cite{Pos11}, \cite{Lyu11}, \cite{HM12}, \cite{Bur18}\dots And coping with both, \cite{Mur16}, or \cite{Fre17a} and \cite{Fre17b}.

In introducing points (units) back in the theory of up-to-homotopy things, there are two main possibilities: either you consider \emph{strict} ones, as in Stasheff's original papers \cite{Sta63}, or in \cite{May72}, \cite{Fre17a}, \cite{Fre17b}, \cite{FTW18}, \cite{Bur18}, or you consider \emph{up-to-homotopy} ones, or other relaxed versions of them: \cite{BoVo73}, \cite{FOOO09a}, \cite{FOOO09b}, \cite{Pos11}, \cite{Lyu11}, \cite{HM12}, \cite{MT14}\dots Or you can do both: \cite{KS09}.

In this paper, we work in the algebraic and strict part of the subject. The contribution we add to the present panorama is to prove the existence of minimal models à la Sullivan $P_\infty$ for operads $P$ on cochain complexes over a characteristic zero field $\mk$, with non-trivial arity zero in cohomology, $HP(0) = \mk$. In doing so, we extend the works of Markl \cite{Mar96}, \cite{Mar04} (see also \cite{MSS02}) which proved the existence of such models for non-unitary operads, $P(0) = 0$. Our models include the one of \cite{Bur18} for the unitary associative operad $\Ass_+ = u\Ass$. More precisely, our main result says:

\newtheorem*{I1}{\normalfont\bfseries Theorem $\textbf{\ref{existencia+}}$}
\begin{I1}
Every cohomologically unitary $HP(0) = \mk$, cohomologically connected $HP(1) = \mk$ operad with unitary multiplication $P\in \Mag_+ \backslash \Op $ has a Sullivan minimal model $P_\infty \stackrel{\sim}{\longrightarrow} P$. This minimal model is unitary $P_\infty(0) = \mk$, connected $P_\infty(1) = \mk$ and has a unitary multiplication.
\end{I1}

Here, $\Mag_+$ is the \emph{unitary magmatic} operad: we recall its definition in section \ref{Sigmaoperads}. The restriction condition \emph{operad with unitary multiplication} just means that $P$ is an operad together a morphism $\varphi : \Mag_+ \longrightarrow P$. That is, the unit $1 \in P(0)$ acts in fact as a unit; in other words, there is at least one operation $m \in P(2)$ such that $1$ is a unit for the operation, $m \circ_i 1 = \id, i=1,2$. Therefore, the hypothesis of being (cohomologically) unitary for $P$ is not actually an empty condition.

In the non-unitary case, the importance of such minimal models is well known. For instance, they provide a \emph{strictification} of up-to-homotopy algebras, in that for an operad $P$ (with mild hypotheses), up-to-homotopy $P$-algebras are the same as strict, regular $P_\infty$-algebras. We show how $A_\infty$-algebras with strict units are exactly $(\Ass_+)_\infty = su\Ass_\infty$-algebras.

As an application too, we offer another proof of the formality of the \emph{unitary} $n$-little disks operad $\D_{n+}$ over the rationals. This fills the gap in our paper \cite{GNPR05} noticed by Willwacher in his speech at the 2018 Rio International Congress of Mathematicians \cite{Wil18}.

\subsection{} Markl's mimicking of the Sullivan's original algorithm for dg commutative algebras to non-unitary operads relies on the fact that, when restricted to operads which are non-unitary $P(0) = 0$\footnote{In fact, we show how there is only the need to assume \emph{cohomologically} non-unitary operads, $HP(0) = 0$, in his case.} and cohomologically connected $HP(1) = \mk$, their minimal model is a free graded operad $P_\infty = \Gamma (M)$ over a $\Sigma$-module $M$ which is trivial in arities $0$ and $1$, $M(0) = M(1) = 0$. In this situation, the restricted free graded operad $\Gamma (M)$ has tamer behavior than the \lq\lq wild" general one. We call it $\Gamma_0 (M)$.

Now the point is that, if we want to construct the minimal model à la Sullivan for cohomologically unitary and cohomologically connected operads $HP(0) = HP(1) = \mk$, \emph{keeping the units strict}, we can  also assume that the generating module $M$ also has trivial arities $0$ and $1$. This possibility has been recently made feasible thanks to Fresse's $\Lambda$-modules and $\Lambda$-operads, \cite{Fre17a}.

We recall the definitions of $\Lambda$-modules and $\Lambda$-operads in section $2$, but to put it succinctly, we strip out of the operad all the structure carried by the elements of $P(0)$ and add it to the underlying category of $\Sigma$-modules. For instance, the action of a unit $1 \in \mk = P(0)$ on an arbitrary element $\omega \in P(m)$, $\omega \mapsto \omega \circ_i 1 \in P(m-1)$ becomes part of the structure of the underlying module as a \emph{restriction operation} $\delta_i: P(m) \longrightarrow P(m-1)$. The enhanced category of $\Sigma$-modules with these operations is the category of $\Lambda$-modules. The forgetful functor from operads to $\Lambda$-modules has a left adjoint $\Gamma_+$ which is the one we use to build our minimal models for cohomologically unitary operads. Notice that, as a consequence, the $\Lambda$-structure, or which is the same, the action of the units, becomes fixed and is inherited by the free unitary operad $\Gamma_+$. So the units of our minimal models and their algebras are strict: up-to-homotopy units are excluded in this treatment of the subject.

\subsection{} As with our paper \cite{CR19}, a comparison with the minimal models of operads obtained thanks to the \emph{curved} Koszul duality \cite{Bur18}, \cite{HM12} might be in order. Let us point out a slight advantage of our approach: in order to construct the minimal model of an operad $P$ through the Sullivan algorithm, $P$ does \emph{not} need to fulfill any Koszul duality, curved or otherwise, not even to be quadratic. We only need the simpler conditions on its cohomology $HP(0) \in \{0, \mk \}$, $HP(1) = \mk$, and a unitary multiplication in the cohomologically unitary case.

\subsection{} The contents of the paper are as follows. In section two, we recall some general definitions and facts about $\Sigma$ and $\Lambda$ modules and operads. Section three does the same with trees, free operads and the two particular instances of them we use in the present paper.  Section four contains the basic homotopy theory of operads we need: extensions and their cofibrant properties, and homotopies between morphisms of operads. Section five is devoted to the proof of our main results: the existence and uniqueness of minimal models for dg operads in the non-unitary and unitary case. In section six, we prove the formality mentioned result and check different issues raised by our main results.

Besides Fresse's $\Lambda$-structures, the second main technical device which allows us to transfer the constructions from the non-unitary case to the unitary one are a \emph{simplicial-like} structure for operads with unitary multiplication and a \emph{Kan-like condition} that certain families of elements appearing in our operads verify. The simplicial-like structure and Kan-like condition are the origins of the need for a \emph{unitary multiplication} in the operads to which our results apply.

\section{Notations and conventions}

\subsection{} Throughout this paper, $\mk$ denotes a field of zero characteristic.

Except for a brief appearance of the little disks operad at the end of the paper, all of our operads live in two categories:  $\C = \dgVect $, or $\C = \gVect$, the categories or \emph{dg vector spaces} (also, cochain complexes, differential of degree $+1$) and \emph{graded vector spaces}, over $\mk$. If necessary, we will use the notation $\sMod^\C$, and$ \Op^\C$ for the categories of $\Sigma$-modules and operads with coefficients in $\C$; otherwise, we will omit $\C$ everywhere. Alternatively, we will call their objects \emph{dg operads} and \emph{graded operads}, respectively.

We denote by $0$ the initial object of $\C$ and also by  $\mk$ the unit object of the standard tensor product. $1 \in \mk$ denotes the unit of the field $\mk$. $\id$ denotes the identity of an object in any category and also the image of $1 \in \mk$ by the unit morphism of the operad $\eta : \mk \longrightarrow P(1)$.

\subsection{} Let $C \in \C$ be a dg vector space or a graded space. If $c\in C^n$, we say that $c$ has \emph{degree} $n$ and note it as $|c| = n$. A morphism of complexes $\varphi: C \longrightarrow D$ is a \emph{quasi-isomorphism}, \emph{quis} for short, if it induces an isomorphism in cohomology $\varphi_* = H\varphi : HC \longrightarrow HD$. Given a morphism $\varphi: C \longrightarrow D$ of complexes, we denote by $C\varphi$ the \textit{cone of $\varphi$}. This is the cochain complex given by $C\varphi^n=C^{n+1}\oplus D^n$ with differential

$$
\begin{pmatrix}
-\partial_C   &   0  \\
-\varphi      &   \partial_D
\end{pmatrix}  \ .
$$

We will also denote by $ZC\varphi$, $BC\varphi$ and $HC\varphi = H(C,D)$ the graded vector spaces of the \emph{relative cocycles}, \emph{relative coboundaries} and \emph{relative cohomology}, respectively. The morphism $\varphi$ is a quasi-isomorphism if and only if $HC\varphi =0$.

\subsection{$\Sigma$-modules} Let us recall some definitions and notations about operads (see \cite{KrMay95}, \cite{MSS02}, \cite{Fre17a}).

Let $\Sigma$ be the {\it symmetric groupoid\/}, that is, the category whose objects are the  sets $ \under{n} =  \{1,\dots , n \} $ for $n\ge 1$. For $n=0$, we put $\under{0} = \emptyset$, the empty set. As for the morphisms, $ \Sigma (\under{n},\under{n}) =  \Sigma_n$, and $ \Sigma (\under{}, \under{n}) = \emptyset$  if $m\neq n$, where  $\Sigma_n = \Aut\{1,\dots , n \} $ are the symmetric groups. We will also need to consider its full subcategories $\Sigma_{>1} \subset \Sigma_{>0} \subset \Sigma$, without the $\under{0}, \under{1}$ objects, and the $\under{0}$ object, respectively.

The category of contravariant functors from $\Sigma$ to $\mathcal{C}$ is called the category of $\Sigma$-{\it modules\/} ($\Sigma$-sequences in \cite{Fre17a}) and denoted by $\sMod$. We identify its objects with sequences of objects in $\mathcal{C}$, $M=\left(M(l)\right)_{l\geq 0} = (M(0), M(1), \dots , M(l), \dots)$, with a right $\Sigma_l$-action on each $M(l)$. So, every $M(l)$ is a $\mk [\Sigma_l]$-module, or $\Sigma_l$-module for short. If $\omega$ is an element of $M(l)$, $l$ is called the {\it arity\/} of $\omega$. We say that a $\Sigma$-module $E$ is of \emph{homogeneous arity} $p$ if $E(l) = 0$ for $l \neq p$. If $\omega \in M(l)^p$, we say that $\omega$ has \emph{arity-degree} $(l,p)$. If $M$ and $N$ are $\Sigma$-modules, a {\it morphism of $\Sigma$-modules} $f:M \longrightarrow N$ is a sequence of $\Sigma_l$-{equivariant\/} morphisms $f(l): M(l) \longrightarrow N(l), \ l\geq 0$. Such a morphism is called a \emph{quasi-isomorphism} if every $f(l) : M(l) \longrightarrow N(l)$ is a quasi-isomorphism of complexes for all $l\geq 0$.

If $M$ is a $\Sigma$-module, it's clear that cocycles, coboundaries and cohomology, $ZM, BM, HM$, inherit a natural $\Sigma$-module structure too. In the same vein, if $\varphi : M \longrightarrow N$ is a morphism of $\Sigma$-modules, also its cone $C\varphi$ and the relative cocycles, coboundaries and cohomology, $ZC\varphi, BC\varphi, HC\varphi = H(M,N)$ inherit natural $\Sigma$-module structures. Finally, it is equally clear that projections from cocycles to cohomology $\pi : Z \longrightarrow H$ are morphisms of $\Sigma$-modules, with these inherited structures.

We also use the categories $\sModc$ and $\sModu$ of contravariant functors from $\Sigma_{>0}$ and $\Sigma_{>1}$ to $\C$. We can also consider $\sModu$ and $\sModc$ as the full subcategories of $\sMod$ of those $\Sigma$-modules $M$ such that $M(0) = M(1) = 0$ and $M(0) = 0$, respectively.

\begin{remark}\label{projectius} We are going to resort to the fact that over the group algebras $\mk [\Sigma_n] $ all modules are projective. So, for any $\Sigma$-module $M$ and any $n$, $M(n)$ is a projective $\Sigma_n$-module. This is a consequence of Maschke's theorem.
\end{remark}

\subsection{$\Sigma$-operads}\label{Sigmaoperads} $\Op$ denotes the category of $\Sigma$-operads. Operads can be described as $\Sigma$-modules together with either \emph{structure morphisms} \cite{MSS02} (also called \emph{full composition products} \cite{Fre17a}), $\gamma_{l;m_1,\dots ,m_l} : P(l)\otimes P(m_1) \otimes \dots \otimes P(m_l)\longrightarrow P(m)$, or, equivalently, \emph{composition operations} \cite{MSS02} (also called \emph{partial composition products} \cite{Fre17a}), $
\circ_i : P(l) \otimes P(m) \longrightarrow P(l+m-1)$, and a \emph{unit} $\eta : \mk \longrightarrow P(1)$, satisfying equivariance, associativity and unit axioms (see \cite{KrMay95}, \cite{MSS02}, \cite{Fre17a}). If $P$ and $Q$ are operads, a \emph{morphism of operads} $\varphi : P \longrightarrow Q$ is a morphism of $\Sigma$-modules which respects composition products and units. A morphism of operads is called a \emph{quasi-isomorphism} if it is so by forgetting the operad structure.

We call an operad $P \in \Op$:

\begin{itemize}
  \item[(a)] \emph{Non-unitary} if $P(0) = 0 $, and we denote by   $\Op_0$ the subcategory of \emph{non-unitary operads}.
  \item[(b)] \emph{Unitary} if $P(0) = \mk$, and we denote by  $\Op_+$ the subcategory of \emph{unitary operads}. To emphasize, we sometimes denote them by $P_+$. Morphisms $\phi : P \longrightarrow Q$ in $\Op_+$ are required to be the identity in the zero arity, $\phi(0) = \id : P(0) \longrightarrow Q(0)$. We call them \emph{unitary morphisms}.
  \item[(c)] \emph{Connected} if $P(1) = \mk$. In order not to add more unnecessary notation, we still denote by $\Op_0$ and $\Op_+$ the subcategories of $\Op$ of \emph{non-unitary and connected} operads and \emph{unitary and connected operads}, respectively.
  \item[(d)] \emph{With unitary multiplication}, if there is a unitary morphism $\phi: \Mag_+ \longrightarrow P$, and we denote its category by $\Mag_+ \backslash \Op$. Notice that this condition entails the existence of a multiplication operation $m \in P$ such that $m \circ_1 1 = \id = m \circ_2 1$. Below, we recall the definition of the \emph{unitary magmatic operad} $\Mag_+$.
\end{itemize}

Two  basic operations we perform on our operads, when possible, are the following:

\begin{itemize}
  \item[(a)] Let $P$ be a connected operad. Denote by $\overline{P}$ its \emph{augmentation ideal}. It is the $\Sigma$-module

      $$
      \overline{P}(l) =
      \begin{cases}
        0 , & \mbox{if } l =0,1, \\
        P(l), & \mbox{otherwise}.
      \end{cases}
      $$

  \item[(b)] We say that a non-unitary operad $P$ admits a \emph{unitary extension} when we have a unitary operad $P_+$, which agrees with $P$ in arity $l> 0$ and  composition operations extend the composition operations of $P$. In this case, the canonical embedding $i_+ : P \longrightarrow P_+$ is a morphism in the category of operads.
\end{itemize}

Later on, we recall when a non-unitary operad admits such a unitary extension.

Some particular operads that appear in our paper are the following. We are going to describe the using the graded free operad functor $\Gamma$. We recall some stuff about $\Gamma$ in section \ref{free}.

\begin{itemize}
  \item[$\bullet$] The (non-unitary) \emph{magmatic operad} $\Mag$ is the operad of magmas: algebras with just one arity two operation, no relations at all. That is, $\Mag$ is the free graded operad

      $$
      \Mag = \Gamma (\mu, \sigma\cdot \mu) \ ,
      $$

      \noindent where $\mu$ is an arity two operation and $\sigma \in \Sigma_2$ is the permutation $(2 \ 1)$. As a dg operad, we necessarily have $\partial \mu = 0$. The \emph{unitary magmatic operad} $\Mag_+$ is its unitary extension:

      $$
      \Mag_+ = \dfrac{\Gamma (1, \mu, \sigma\cdot\mu)}{\langle \mu \circ_1 1 - \id,\ \mu \circ_2 1 - \id \rangle} \ .
      $$

      \noindent It adds a unit $1 \in \Mag_+(0) = \mk$ to $\Mag$ \emph{and} the relation $\mu_2 \circ_1 1 = \id = \mu_2 \circ_2 1$. As dg operad, we necessarily have $\partial 1 = \partial \mu = 0$.

  \item[$\bullet$] The (non-unitary) \emph{associative operad} $\Ass$ is the operad of associative algebras:

  $$
  \Ass = \dfrac{\Gamma (\mu, \sigma\cdot\mu)}{\langle \mu \circ_1 \mu - \mu \circ_2 \mu \rangle} \ .
  $$

  \noindent As a dg operad, we necessarily have $\partial \mu = 0$. Its unitary extension $\Ass_+$ is the \emph{unitary associative operad}:

  $$
  \Ass_+ = \dfrac{\Gamma (1, \mu, \sigma\cdot\mu)}{\langle \mu \circ_1 \mu - \mu \circ_2 \mu \ ,         \mu \circ_1 1 - \id,\ \mu \circ_2 1 - \id \rangle} \ .
  $$

  \noindent As dg operad, we necessarily have $\partial 1 = \partial \mu = 0$.

  \item[$\bullet$] The (non-unitary) \emph{commutative operad} $\Com$ is the operad of commutative algebras:

  $$
  \Com = \dfrac{\Gamma (\mu)}{\langle \mu \circ_1 \mu - \mu \circ_2 \mu, \ \mu \circ_1 \mu - \mu\circ_2\mu \rangle} \ .
  $$

  \noindent Again, as a dg operad, $\partial \mu = 0$. We are not going to use its unitary extension
\end{itemize}

\subsection{$\Lambda$-modules} Following \cite{Fre17a}, in order to produce minimal models for our unitary operads, we split the units in $P(0)$ out of them. However, we do not want to forget about this arity zero term, so we \lq\lq include" the data of the units in the $\Sigma$-module structure as follows. Let $\Lambda$ denote the category with the same objects as $\Sigma$ but with morphisms $\Lambda (\under{m}, \under{n}) = \left\{  \text{injective  maps}\  \under{m} \longrightarrow \under{n} \right\}$. We also consider its subcategories $\Lambda_{>1} \subset \Lambda_{>0}\subset \Lambda$, defined in the same way as the ones of $\Sigma$. So, a $\Lambda$-structure on $M \in \sMod$ is just a contravariant functorial morphism $u^* : M(n) \longrightarrow M(m)$ for every injective map $u: \underline{m} \longrightarrow \underline{n}$. If $M$ is the $\Lambda$-module associated with a unitary operad $P_+$, then, for every $\omega\in P_+(n)$, we have $u^*(\omega) = \omega (1, \dots, 1, \id, 1 , \dots , 1, \id , 1, \dots, 1)$, with $\id$ placed at the $u(i)$-th variables, for $i = 1, \dots , m$.

All these $u^*$ can be written as compositions of the \emph{restriction operations} that come from some particular injective maps  $u = \delta^i: \underline{n-1} \longrightarrow \underline{n}$:

$$
\delta^i(x) =
\begin{cases}
  x, & \mbox{if } x= 1, \dots , i-1 \\
  x+1, & \mbox{if } x= i, \dots , n-1  \ .
\end{cases}
$$

Again, if $M = P_+$ and we put $\delta_i = (\delta_i^*)$, this means $\delta_i (\omega) = \omega (\id, \dots, \id, 1 , \id , \dots , \id)$, with $1$ in the $i$-th variable. For instance, an augmentation $\varepsilon : M(n) \longrightarrow \mk$, $\varepsilon (\omega) = \omega (1, \dots, 1)$, can be written as a composition like $\varepsilon = \delta_1 \circ \delta_1 \circ \dots $. So whenever we want to define a $\Lambda$-structure on $M$, we can restrict ourselves to define those $\delta_i : M(n) \longrightarrow M(n-1), i = 1, \dots , n, n\geq 1$, subjected to the natural contravariant functorial constraints $(\delta^i\delta^j)^* = \delta_j\delta_i$, equivariant relations..., that we can find in \cite{Fre17a}, p. 71.

$\lMod$ denotes the category of contravariant functors from $\Lambda$ to $\C$ and it is called the category of $\Lambda$-modules ($\Lambda$-sequences in \cite{Fre17a}). We still have the obvious notions of arity, morphisms and the full subcategories $\lModu$ and $\lModc$ for $\Lambda$-modules.

Since restriction operations commute with differentials, cocycles, coboundaries and cohomology, $ZM$, $BM$, $HM$ inherit natural structures of $\Lambda$-modules from a $\Lambda$-module. The same is true if we have a morphism $\varphi : M \longrightarrow N$ of $\Lambda$-modules: its cone $C\varphi$, relative cocycles, coboundaries and cohomology, $ZC\varphi, BC\varphi, HC\varphi = H(M, N)$ inherit natural $\Lambda$-structures. Nevertheless, we are \emph{not} going to use this natural structure, but to \emph{chose} the more convenient one at each step of Sullivan's algorithm: see definition \ref{OpdefKS+}.

\subsection{$\Lambda$-operads} Let $P_+$ be an unitary operad $P_+(0) = \mk$. We can associate to $P_+$ a non-unitary one $P = \tau P_+$, its truncation,

$$
P (l) =
\begin{cases}
  0, & \mbox{if } l = 0, \\
  P_+(l), & \mbox{otherwise}.
\end{cases}
$$

together with the following data:

\begin{itemize}
  \item[(1)] The composition operations $\circ_i : P_+(m)\otimes P_+(n) \longrightarrow P_+(m+n-1)$ of $P_+$ , for $m, n >0$.
  \item[(2)] The restriction operations $u^*: P_+(n) \longrightarrow P_+(m)$, for every $u\in \Lambda (\under{m},\under{n})$, for $m,n >0$. These restrictions are defined as $u^*(\omega) = \omega (1, \dots, 1, \id, 1 , \dots , 1, \id , 1, \dots, 1)$, with $\id$ placed at the $u(i)$-th variables, for $i = 1, \dots , m$.
  \item[(3)] The augmentations $\varepsilon : P_+(m) \longrightarrow \mk = P_+(0)$,   $\varepsilon (\omega) = \omega (1,  \dots , 1)$, for $m>0$.
\end{itemize}

A non-unitary operad $P$, together with the structures $(1), (2), (3)$ is called a $\Lambda$-operad.

\begin{remark} Truncation makes sense also for operads having arbitrary zero arity $P(0)$. See \ref{twominimalmodels}.
\end{remark}

According to \cite{Fre17a}, p. 58, every unitary operad $P_+$ can be recovered from its non-unitary truncation $P$ with the help of these data, which define the category of $\Lambda$-operads, $\Lambda \Op_0$, and its corresponding variants (see \cite{Fre17a}, page 71). That is, we have an  isomorphism of categories

$$
\tau : \Op_{+} =  \mathbf{\Lambda}\Op_0 / \Com: (\ )_+  \ .
$$

Here, $(\ )_+$ denotes the \emph{unitary extension} associated with any non-unitary and augmented $\Lambda$-operad (see \cite{Fre17a}, p. 81). Namely, if $P \in \mathbf{\Lambda}\Op_0 / \Com$, its unitary extension $P_+$ is the $\Sigma$-operad defined by

$$
P_+ (l) =
\begin{cases}
  \mk, & \mbox{if } l = 0, \\
  P(l), & \mbox{otherwise}.
\end{cases}
$$

And the unitary operad structure is recovered as follows:

\begin{itemize}
  \item[(1)] Composition operations $\circ_i : P_+(m)\otimes P_+(n) \longrightarrow P_+(m+n-1)$ for $m, n >0$ are those of $P$.
  \item[(2)] For $n>1$, the \emph{restriction operation} $u^* = \delta_i: P(n) \longrightarrow P(n-1)$ gives us the partial composition operation $\_\circ_i 1: P_+(n)\otimes P_+(0) \longrightarrow P_+(n-1), \ i=1,\dots , n$.
  \item[(3)] The augmentation $\varepsilon : P(1) \longrightarrow \mk$ gives the unique partial composition product $P_+(1)\otimes P_+(0) \longrightarrow P_+(0)$.
\end{itemize}

Let us end this section with a couple of easy remarks.

\begin{lemma}\label{extensionfunctor} The unitary extension functor $(\ )_+$ commutes with cohomology and colimits. That is, $H(P_+) = (HP)_+$ and $\dirlim_n (P_{n})_+ = (\dirlim_n P_n)_+$.
\end{lemma}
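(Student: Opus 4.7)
My plan is to reduce both statements to arity-wise verifications on the underlying $\Sigma$-modules, together with a short check that the operad and $\Lambda$-structures match. The key observation is that the unitary extension $(\ )_+$ is defined arity-wise (it leaves arities $l>0$ untouched and simply places $\mk$ in arity $0$), that cohomology of a $\Sigma$-module is computed arity-wise, and that the sequential colimits in question are filtered, so they too are computed arity-wise on the underlying modules.

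For the cohomology identity $H(P_+)=(HP)_+$ I would check it arity by arity. For $l>0$ we have $P_+(l)=P(l)$, so $H(P_+)(l) = HP(l) = (HP)_+(l)$. In arity $0$, $P_+(0)=\mk$ is concentrated in degree zero with zero differential, whence $H(P_+)(0) = \mk = (HP)_+(0)$. To upgrade this to an isomorphism of unitary operads it is enough to recall that the restriction operations $\delta_i$ and the partial compositions $\circ_i$ commute with the differentials by definition, so they descend to cohomology; the induced $\Lambda$-operad structure on $H(P_+)$ is exactly the one produced by the recipe of the previous paragraphs applied to the (non-unitary) $HP$, and then re-extended via $(\ )_+$.

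For the colimit identity, the cleanest argument is to invoke the isomorphism of categories $\tau:\Op_+\cong \mathbf{\Lambda}\Op_0/\Com$ recalled above: since $(\ )_+$ is one side of an equivalence of categories, it automatically preserves all limits and colimits. A more hands-on alternative is to note that a sequential diagram is filtered, so the colimit of operads is computed on the underlying $\Sigma$-modules. For $l>0$ we get
$$\bigl(\dirlim_n (P_n)_+\bigr)(l) = \dirlim_n P_n(l) = \bigl(\dirlim_n P_n\bigr)_+(l),$$
while in arity $0$ every transition map is forced to be the identity $\mk\to\mk$ (any morphism of unitary operads preserves the unit, and the only endomorphism of $\mk$ doing so is the identity), hence $\dirlim_n \mk = \mk = (\dirlim_n P_n)_+(0)$.

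The only potential subtlety, which I do not expect to be a real obstacle, is the bookkeeping to verify that the operad composition, the $\Lambda$-structure and the augmentation on both sides agree under these arity-wise identifications. This is routine because each structural operation on $(\ )_+$ is described directly in terms of either a composition in $P$, a restriction $\delta_i$, or the augmentation $\varepsilon$, each of which interacts in the expected way with both cohomology and filtered colimits.
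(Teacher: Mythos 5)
Your proposal is correct and follows essentially the paper's route: the cohomology identity is the same arity-wise check the paper treats as obvious, and your main colimit argument (that $(\ )_+$ preserves colimits because of its categorical relationship with the truncation $\tau$, via the isomorphism $\Op_+ \cong \mathbf{\Lambda}\Op_0/\Com$) is the same as the paper's appeal to $\tau$ being right adjoint to $(\ )_+$. Your hands-on arity-wise alternative for sequential colimits is a fine bonus, and is anyway immediate here since the transition maps are identities in arity $0$ by construction.
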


\begin{proof} Commutation with cohomology is obvious. Commutation with colimits is a consequence of $(\ )_+$ having a right adjoint, namely the truncation functor $\tau$.
\end{proof}

As a consequence, $(\ )_+$ is an exact functor.

\begin{remark} The initial object of the category of general operads $\Op$ is the operad $I$

$$
I(l) =
\begin{cases}
  \mk , & \mbox{if } l = 1, \\
  0, & \mbox{otherwise} \ ,
\end{cases}
$$

and the obvious operad structure. It is also the initial object of the subcategory of non-unitary connected operads $\Op_0$. We denote it also by $I_0$. Its unitary extension $I_+$

$$
I_+(l) =
\begin{cases}
  \mk , & \mbox{if } l = 0, 1, \\
  0, & \mbox{otherwise} \ ,
\end{cases}
$$

with the only possible non-zero partial composition operation being the identity, is the initial object of the subcategory of unitary and connected operads, $\Op_+$.
\end{remark}

\section{Free operads}

We recall in this section the definition of the general free operad and of two of its particular instances we are going to use. We start with a review of trees. Trees are useful to represent elements (operations) of operads, its composition products, and to produce an accurate description of the free operad.

\subsection{Trees} When we speak of trees, we adhere to the definitions and conventions of \cite{Fre17a}, appendix I. We include a summary here, for the reader's convenience.

\begin{definition} An $r$-\emph{tree} $T$ consists of:

\begin{itemize}
\item[(a)] A finite set of \emph{inputs}, $\underline{r} = \left\{i_1, \dots , i_r \right\}$ and an \emph{output} $0$.
\item[(b)] A set of \emph{vertices} $v \in V(T)$.
\item[(c)] A set of \emph{edges} $e\in E(T)$, oriented from the \emph{source} $s(e) \in V(T) \sqcup \underline{r}$ towards the \emph{target} $t(e) \in V(T) \sqcup \left\{0 \right\}$.
\end{itemize}

These items are subjected to the following conditions:

\begin{itemize}
  \item[(1)] There is one and only one edge $e_0 \in E(T)$, the \emph{outgoing edge of the tree}, such that $t(e_0) = 0$.
  \item[(2)] For each $i \in \underline{r}$, there is one and only one edge $e_i \in E(T)$, the \emph{ingoing edge of the tree indexed by $i$}, such that $s(e_i) = i$.
  \item[(3)] For each vertex $v \in V(T)$, there is one and only one edge $e_v \in E(T)$, \emph{the outgoing edge of the vertex $v$}, such that $s(e_v) = v$.
  \item[(4)] Each $v \in V(T)$ is connected to the output $0$ by a chain of edges $e_v, e_{v_{n-1}}, \dots , e_{v_1}, e_{v_0}$ such that $v = s(e_v), t(e_v)= s(e_{v_{n-1}}), t(e_{v_{n-1}}) = s(e_{v_{n-2}}), \dots , t(e_{v_2}) = s(e_{v_1}), t(e_{v_1}) = s(e_{v_0})$ and $t(e_{v_0}) = 0$.
\end{itemize}
\end{definition}

Some fundamental examples of trees:

\begin{itemize}
  \item[(1)] The $r$-\emph{corolla} $Y_r $ is the only tree having just one vertex, $r$ inputs, and one output.
  \item[(2)] The \emph{unit tree} is the only tree without vertices; just one input and one output.
  \item[(3)] \emph{Corks} are trees without inputs, just one output, and just one vertex.
\end{itemize}

An operation of $r$ variables (an element of arity $r$) $w\in P(r)$ can be depicted as a tree of $r$ inputs. The unit tree represents the identity $\id \in P(1)$ and a cork can be thought as a unit $1 \in P(0)$.

However, we are going only only to consider trees that fulfill the following additional  property

\begin{itemize}
  \item[(5)] For each vertex $v\in V(T)$, we have at least one edge $e\in E(T)$ such that $t(e) = v$,
\end{itemize}

In other words, except for the unit $1 \in \mk = P(0)$, our trees don't have real \emph{corks}, because the only time composition operations involve grafting a cork onto a tree, we apply the reduction process described in \cite{Fre17a}, A.1.11, consisting of re-indexing the inputs and removing the cork where we apply it.

To a vertex $v$ we also associate a set of \emph{ingoing edges}: those edges whose target is $v$. Let's denote its cardinal by $r_v = \sharp \left\{ e\in E(T)  \ | \ t(e) = v\right\}$. The extra condition $(5)$ is equivalent to the requirement that $r_v \geq 1$, for every $v\in V(T)$. Trees satisfying it are called \emph{open} trees. In fact, in the constructions of our two particular instances of the free operad, we are going to find only vertices satisfying $r_v \geq 2$. A \emph{reduced} tree is a tree for  which every vertex satisfies this extra condition is called \emph{reduced}.

Let us denote by $\tree (r)$ the category whose objects are $r$-trees and whose morphisms are just isomorphisms. $\widetilde{\tree  (r)}$ denote the full  subcategory of \emph{reduced} trees. For $r\geq 2$, $Y_r \in\widetilde{\tree  (r)} $.

\subsection{The general free operad}\label{free} The forgetful functor $U : \Op \longrightarrow \sMod$ has a left adjoint, the \emph{free operad functor}, $\Gamma : \sMod \longrightarrow \Op$. Arity-wise it can be computed as

$$
\Gamma (M)(l) = \dirlim_{T \in \tree(l)} M(T) \ ,
$$

Here, $M(T)$ denotes the \emph{treewise tensor product} of the $\Sigma$-module $M$ over a tree $T$:

$$
M(T) = \bigotimes_{v\in V(T)} M(r_v) \ .
$$

Of course, this free operad has the well-known universal property of free objects; that is, every morphism of operads $\phi : \Gamma (M) \longrightarrow P$ is uniquely determined by its restriction $\phi_{| M}$. (See \cite{Fre17a}, prop. 1.2.2, for instance.)

\subsection{Two particular instances of the free operad} We need two particular, smaller instances of the free operad.

First, because of \cite{Fre17a}, the restriction of the general free operad $\Gamma$ to $\Sigma$-modules satisfying $M(0) = M(1) = 0$ is a non-unitary and connected operad $\Gamma (M) \in \Op_0$. So the general free operad functor restricts to a smaller one, the \emph{free non-unitary and connected operad}, which we denote $\Gamma_0$. It is the left adjoint of the obvious forgetful functor:

      $$
      \xymatrix{
      {\Op_0}  \ar@<.5ex>[r]^-{U}    &
      \sModu  \ar@<.5ex>[l]^-{\Gamma_0}
      }  \ .
      $$

This is the free operad used by Markl in constructing his minimal models à la Sullivan of non-unitary and cohomologically connected operads (see \cite{Mar96} and \cite{MSS02}).

Second, if $M \in \lModu/\overline{\Com}$, then the general free operad $\Gamma (M)$ inherits the additional structure of an augmented, connected, and unitary $\Lambda$-operad (\cite{Fre17a}, prop. A.3.12). Hence, because of the isomorphism of categories between $\Lambda$-operads and unitary $\Sigma$-operads, it has a unitary extension. Let us denote it by $\Gamma_ {+1}(M) = \Gamma (M)_+$ . It is the left adjoint of the forgetful functor $\overline{U}$ which sends each operad $P$ to its augmentation ideal $\overline{P}$:

      $$
      \xymatrix{
      {\Op_+ = \mathbf{\Lambda}\Op_0 / \Com}  \ar@<.5ex>[r]^-{\overline{U}}    &
      \lModu/\overline{\Com}  \ar@<.5ex>[l]^-{\Gamma_+}
      }  \ .
      $$

A little road map for these categories and functors, where $\iota$ denotes the natural inclusions:

$$
\xymatrix{
{\sMod}\ar@<.5ex>[rr]^{\Gamma}      &         & {\Op}  \ar@<.5ex>[ll]^{U} &        \\
{\sModu} \ar[u]^{\iota} \ar@<.5ex>[r]^-{\Gamma_0}    &  {\Op_0} \ar[ur]^{\iota}  \ar@<.5ex>[l]^-{U} &     & {\Op_+} \ar[ul]^{\iota} \ar[ll]^{\tau}  \ar@{=}[r]  &  {\mathbf{\Lambda}\Op_0/\Com} \ar@<.5ex>[r]^{\overline{U}}  & {\lModu/\overline{\Com} \ .}  \ar@<.5ex>[l]^{\Gamma_+}
}
$$

\begin{remark} Let us point out how $\Gamma_+(M)$ inherits its $\Lambda$-structure from the one of $M \in \lModu/\overline{\Com}$. The $\Lambda$-structure of the latter consists of the restriction operations and the augmentations

$$
\delta_i : M(n) \longrightarrow M(n-1) \ , i = 1, \dots , n \ ,
\qquad \text{and} \qquad
\varepsilon : M(n) \longrightarrow \mk
$$

for $n \geq 2$ ($M(0) = M(1) = 0$). The corresponding morphisms on $\Gamma_+(M)$ are obtained as follows (see \cite{Fre17a}, page 83):

\begin{itemize}
  \item[$\bullet$] Restriction operations $\delta_i : \Gamma_+(M)(n) \longrightarrow \Gamma_+(M)(n-1)$ are determined on $M(n)$ by:
      \begin{itemize}
        \item[(1)] the augmentation $\varepsilon : M(2) \longrightarrow \mk = \Gamma_+(M)(1)$,
        \item[(2)] the internal restriction morphisms $\delta_i : M(n) \longrightarrow M(n-1)$, for $n\geq 3$.
      \end{itemize}
  \item[$\bullet$] The augmentation $\varepsilon : \Gamma_+(M)(n) \longrightarrow \mk$ is determined by $\varepsilon : M(n) \longrightarrow \mk$.
\end{itemize}

Notice that, as consequence $\delta_0 = \delta_1 = \varepsilon : \Gamma_+(M)(2) \longrightarrow \Gamma_+(M)(1) = \mk$. This means that free unitary operads $\Gamma_+(M)$ are naturally operads with a unitary multiplication whenever we define $\varepsilon : M(2) \longrightarrow \mk$ to be the generator of $\mk$ on one element $\omega \in M(2)$. Then, for $\omega \in \Gamma_+(M)$, we have $\omega \circ_1 1 = \id = \omega \circ_2 1$.
\end{remark}

The key point that encompasses the possibility of constructing minimal models for operads in both cases we are studying, cohomologically non-unitary and unitary, is that, since $M(0) = M(1) = 0$, there are no arity zero and one trees in the colimit defining the free operad. In this case, the only morphisms in the subcategory of open trees $\widetilde{\tree(l)} $ are trivial isomorphisms. As a consequence, this colimit is reduced to a direct sum \cite{Fre17a}, proposition A.3.14. Hence, for $\Gamma = \Gamma_0, \Gamma_+$,

      $$
      \Gamma (M)(l) = \bigoplus_{T \in \widetilde{\tree(l)}} M(T) \ .
      $$

All this leads to the following

\begin{lemma}\label{lamadredelcordero} For every module $M$ with $M(0) = M(1) = 0$, and every homogeneous module $E$ of arity $p > 1$, $\Gamma_0$ and $\Gamma_+$ verify:
\begin{itemize}
  \item[(a)]

$$
\Gamma_0(M)(l) =
\begin{cases}
   0 , & \mbox{if } l= 0, \\
   \mk , & \mbox{if } l = 1,  \\
   \Gamma (M) (l) , & \mbox{if } l \neq 0,1
\end{cases}
\qquad \text{and} \qquad
\Gamma_+(M) (l) =
\begin{cases}
   \mk, & \mbox{if } l= 0, \\
   \mk , & \mbox{if } l = 1, \\
   \Gamma (M) (l) , &\mbox{if } l \neq 0,1 \ .
\end{cases}
$$
  \item[(b)] For $\Gamma = \Gamma_0, \Gamma_+$,
$$
\Gamma (M \oplus E) (l) =
\begin{cases}
  \Gamma (M) (l), & \mbox{if } l < p, \\
  \Gamma (M) (p) \oplus E , & \mbox{if } l = p.
\end{cases}
$$
\end{itemize}
\end{lemma}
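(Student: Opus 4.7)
The plan is to leverage the description of the free operad displayed just before the lemma,
$$
\Gamma(M)(l) \;=\; \bigoplus_{T \in \widetilde{\tree(l)}} M(T)
\qquad (\Gamma = \Gamma_{01},\, \Gamma_{+1}),
$$
together with the observation that, since $M(0) = M(1) = 0$, the only trees contributing non-trivially to this sum are the reduced ones whose vertices all satisfy $r_v \geq 2$---let me call them \emph{strictly reduced}. Indeed, any reduced tree containing a vertex $v$ with $r_v \leq 1$ has that vertex labelled by $M(r_v) = 0$, collapsing the whole tensor product to zero.

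Part (a) is then essentially bookkeeping in small arities. For $\Gamma_{01}(M)$, the only arity-one tree with a non-zero treewise tensor product is the unit tree (which has no vertices), contributing the empty tensor product $\mk$; in arity zero, there are no reduced trees at all. For $l \geq 2$ the indexing category $\widetilde{\tree(l)}$ is identical to the one that appears for the general $\Gamma(M)(l)$, so the two coincide. For $\Gamma_{+1}(M) = \Gamma(M)_+$, the unitary extension construction forces $\Gamma_{+1}(M)(0) = \mk$ by definition, while for $l \geq 1$ one has $\Gamma_{+1}(M)(l) = \Gamma_{01}(M)(l)$.

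The core content of part (b) is a combinatorial fact about strictly reduced trees, which I would prove by an easy induction on the subtree rooted at a vertex: for every $v \in V(T)$ the number of inputs of $T$ lying above $v$ is at least $r_v$, because each of the $r_v$ ingoing edges at $v$ either issues from an input (one leaf) or from a vertex $w$ whose subtree, by induction, contributes at least $r_w \geq 2$ leaves. Two consequences follow. First, if $l < p$, no vertex of a strictly reduced $T \in \widetilde{\tree(l)}$ can satisfy $r_v = p$; otherwise $T$ would possess at least $p > l$ inputs. Second, if $l = p$ and some vertex $v$ has $r_v = p$, then all $p$ inputs of $T$ sit directly above $v$, and any vertex $v'$ lying strictly below $v$ would have a unique ingoing edge (the one from $v$), violating $r_{v'} \geq 2$; hence $v$ is the root and $T = Y_p$, the $p$-corolla.

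With this combinatorial control in hand, part (b) follows by distributing the tensor product at each vertex: $(M \oplus E)(T) = \bigotimes_{v \in V(T)}(M(r_v) \oplus E(r_v))$ expands into a sum of terms that make an independent choice of $M$ or $E$ at every vertex, with the $E$-choice at $v$ contributing non-trivially only when $r_v = p$ (since $E$ is of homogeneous arity $p$). For $l < p$, the first consequence kills every $E$-summand and we recover $\Gamma(M \oplus E)(l) = \Gamma(M)(l)$. For $l = p$, the unique strictly reduced tree admitting a non-trivial $E$-contribution is $T = Y_p$, which produces exactly one additional summand $E$ attached to its root vertex; all other strictly reduced trees reproduce verbatim their contribution to $\Gamma(M)(p)$. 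The main obstacle is precisely this combinatorial control---ruling out any exotic tree that might sneak an $E$-factor into arities $\leq p$ beyond the corolla $Y_p$---and it is handled by the inductive leaf-count above.
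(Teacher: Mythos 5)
Your proof is correct and follows essentially the same route as the paper's: the direct sum over reduced trees, the vertexwise expansion of $(M\oplus E)(T)$, and the observation that a vertex with $r_v = p$ forces $l \geq p$, with the corolla $Y_p$ as the only contributing tree at $l = p$. The only difference is that you make explicit, via the leaf-count induction, the combinatorial facts the paper merely asserts, and you spell out part (a), which the paper leaves to the cited description of the free operad.
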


\begin{proof} Let's compute:

$$
  (M\oplus E)(T) =  \bigotimes_{v\in V(T)} (M(r_v)\oplus E(r_v) )
    = \left(  \bigotimes_{ \substack{ v\in V(T)  \\   r_v \neq p}  } M(r_v) \right) \otimes \left( \bigotimes_{ \substack{ v\in V(T) \\ r_v = p  }  } \left(  M(r_v) \oplus E \right) \right) \ .
$$

Thus,

$$
  \Gamma (M\oplus E)(m)
    = \bigoplus_{T \in \widetilde{\tree (m)}}  \left[ \left( \bigotimes_{ \substack{ v\in V(T)  \\   r_v \neq p}  } M(r_v) \right) \otimes \left(  \bigotimes_{ \substack{ v\in V(T) \\ r_v = p  }  } \left(  M(r_v) \oplus E \right)   \right)  \right] \ .
$$

If $m<p$,  $ \bigotimes_{ \substack{ v\in V(T) \\ r_v = p  }  } \left(  M(r_v) \otimes E \right) = \mk $, since there are no trees with $m < p$ ingoing edges and a vertex with $p$ ingoing edges (there are no corks, since $M(0) = 0$). Hence, in this case, we simply have

$$
\Gamma (M\oplus E)(m) = \bigoplus_{T \in \widetilde{\tree (m)}}   \left( \bigotimes_{ \substack{ v\in V(T)  \\   r_v \neq p}  } M(r_v) \right) = \Gamma (M) (m) \ .
$$

For $ m = p$, we split our sum over all trees in two terms: one for the corolla $Y_p$ and another one for the rest:

\begin{multline*}
\Gamma (M\oplus E)(p) =
\left[ \left( \bigotimes_{ \substack{ v\in V(Y_p)  \\   r_v \neq p}  } M(r_v) \right) \otimes \left(  \bigotimes_{ \substack{ v\in V(Y_p) \\ r_v = p  }  } \left(  M(r_v) \oplus E \right)   \right)  \right]
\oplus  \\
\bigoplus_{ \substack{  T \in \widetilde{\tree (m)} \\ T \neq Y_p  } }  \left[ \left( \bigotimes_{ \substack{ v\in V(T)  \\   r_v \neq p}  } M(r_v) \right) \otimes \left(  \bigotimes_{ \substack{ v\in V(T) \\ r_v = p  }  } \left(  M(r_v) \oplus E \right)   \right)  \right]
\end{multline*}

And we have:

\begin{itemize}
  \item[(1)] The set of vertices $ v\in V(Y_p) \ ,   r_v \neq p  $ is empty. So for the first tensor product we get: $ \bigotimes_{ \substack{ v\in V(Y_p)  \\   r_v \neq p}  } M(r_v) = \mk  $.
  \item[(2)] There is just one vertex in $v\in V(Y_p)$ with $ r_v = p$. Hence, $\bigotimes_{ \substack{ v\in V(Y_p) \\ r_v = p  }  }   (M(r_v) \oplus E) = M(p)\oplus E $.
  \item[(3)] We leave  $\bigotimes_{ \substack{ v\in V(T)  \\   r_v \neq p}  } M(r_v)  $ as it is.
  \item[(4)] As for  $ \bigotimes_{ \substack{ v\in V(T) \\ r_v = p  }  } \left(  M(r_v) \oplus E \right) $, since we are also assuming $r_v \geq 2$, this set of vertices is empty for every tree. So we only get $\mk$ for every vertex $v$.
\end{itemize}

All in all,

$$
\Gamma (M\oplus E)(p) = \left(  M(p) \oplus E \right)\ \oplus   \bigoplus_{ \substack{  T \in \widetilde{\tree (m)} \\ T \neq Y_p  } }  \left[  \bigotimes_{ \substack{ v\in V(T)  \\   r_v \neq p}  } M(r_v)  \right] = \Gamma (M)(p)  \oplus E \ .
$$

\end{proof}

\begin{remark} So, for $M(0) = M(1) = 0$, and forgetting the $\Lambda$-structure when there is one, it's clear that both $\Gamma_0(M)$ and $\Gamma_+(M)$ agree with the general free operad $\Gamma (M)$, outside arities $0$ and $1$. By definition, also $\Gamma_+(M) = \Gamma_0 (M)_+$, when $M$ has a $\Lambda$-module structure.
\end{remark}

\section{Basic operad homotopy theory}

The Hinich \cite{Hin97} model structure on the category of operads can be restricted to the subcategories of non-unitary and connected $\Op_0$, unitary and connected $\Op_+$ and then to their comma categories such as $\Mag_* \backslash \Op$. However, in order to talk about minimal models, we do not need the full power of the whole model structure, but just three elements: weak equivalences, or \emph{quis}, homotopy, and cofibrant (minimal) objects. This basic homotopy theory can be formalized under the name of \emph{Cartan-Eilenberg}, or \emph{Sullivan} categories (see \cite{GNPR10}). We develop here this basic homotopy theory for operads.

Cofibrant, \emph{Sullivan}, operads are built by the known procedure of \lq\lq attaching cells". We have to distinguish between the non-unitary case and the unitary one from the very definition stage of this attachment procedure because, in the second case, we also need to deal with the restriction operations; that is, the action of the units. For this, our strategy is the following. First, we treat the non-unitary case. Then, we see the necessary changes we have to perform on it, once we add the data of the restriction operations.

\subsection{Lifting properties: the non-unitary case}

For the sake of lightening the notation, in this section $\Gamma$ refers to its restriction to non-unitary, connected operads, $\Gamma_0$.

\begin{definition}\label{OpdefKS} (See \cite{MSS02}, cf \cite{GNPR05}) Let $n\geq 2$ be an integer. Let $P \in \Op_0$ be free as a graded operad, $P= \Gamma (M)$, where $M $ is a graded $\Sigma$-module, with $M(0)=M(1)=0$. An \textit{arity $n$ principal extension} of $P$ is the free graded operad

$$
P \sqcup_d \Gamma (E) :=\Gamma (M \oplus E) \ ,
$$

where $E$ is an arity-homogeneous $\Sigma_n$-module with zero differential and $d:E\longrightarrow ZP(n)^{+1}$  a map of $\Sigma_n$-modules of degree $+1$. The differential $\partial$ on $P\sqcup_d \Gamma (E)$ is built upon the differential of P, $d$, and the Leibniz rule.
\end{definition}

\begin{remark} In the context of commutative dg algebras, the analogous construction is called a \emph{Hirsch extension} \cite{GM13}, or a \emph{KS-extension} \cite{Hal83}.
\end{remark}

\begin{lemma}\label{diferencialHirsch} $P \sqcup_d \Gamma (E) $ is a dg operad, and the natural inclusion $\iota : P \longrightarrow P \sqcup_d \Gamma (E) $ is a morphism of dg operads.
\end{lemma}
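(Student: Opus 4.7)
The plan is to use the universal property of the free operad for derivations: for any graded $\Sigma$-module $N$, a derivation of degree $k$ on $\Gamma(N)$ is uniquely determined by its values on the generators $N$. Since part (b) of Lemma \ref{lamadredelcordero} tells us that $\Gamma(M \oplus E) = \Gamma(M) \sqcup \Gamma(E)$ as graded operads agrees with $P$ in arities $< n$ and equals $P(n) \oplus E$ in arity $n$, we have a well-defined graded map on generators
$$
\partial : M \oplus E \longrightarrow \Gamma(M \oplus E)
$$
given by $\partial_{|M} = \partial_P{}_{|M}$ and $\partial_{|E} = d$ (viewing $d(e) \in ZP(n)^{+1} \subset \Gamma(M\oplus E)(n)^{+1}$). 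By the universal property this extends uniquely to a derivation of degree $+1$ of the free graded operad $\Gamma(M \oplus E)$, which we still denote by $\partial$. Since $\partial$ and $\partial_P$ are both derivations of $\Gamma(M) \subset \Gamma(M\oplus E)$ agreeing on the generators $M$, by uniqueness they coincide on all of $P$.

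The key point is then to verify $\partial^2 = 0$. Observe that $\partial^2$ is a derivation of degree $+2$ on the free operad $\Gamma(M \oplus E)$, so again by the universal property it suffices to check that it vanishes on the generators $M \oplus E$. For $m \in M$, $\partial^2(m) = \partial_P^2(m) = 0$ because $(P, \partial_P)$ was already a dg operad. For $e \in E$,
$$
\partial^2(e) = \partial(d(e)) = \partial_P(d(e)) = 0,
$$
where the second equality holds because $d(e) \in P(n)^{+1}$ and $\partial$ restricts to $\partial_P$ on $P$, and the last equality is precisely the hypothesis that $d$ lands in the cocycles $ZP(n)^{+1}$. That $\partial$ is compatible with the partial composition products $\circ_i$ (i.e.\ satisfies the graded Leibniz rule) is built in from the construction: a derivation of the free operad is, by definition, a graded map satisfying Leibniz for both tensor products and partial compositions.

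Finally, the inclusion $\iota : P \longrightarrow P \sqcup_d \Gamma(E) = \Gamma(M \oplus E)$ is the image under the free operad functor $\Gamma$ of the $\Sigma$-module inclusion $M \hookrightarrow M \oplus E$, so it is automatically a morphism of graded operads. Compatibility with the differentials, $\partial \circ \iota = \iota \circ \partial_P$, holds on the generators $M$ by the very definition of $\partial$, and hence extends to all of $P$ because both sides are derivations (equivalently, morphisms of dg operads on the domain) agreeing on a generating set. The only subtle step, and the one that genuinely uses the hypothesis on $d$, is the verification that $\partial^2$ vanishes on $E$; the rest is an application of the universal property of the free operad for derivations.
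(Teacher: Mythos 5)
Your proof is correct and is exactly the standard verification that the paper leaves implicit (its proof is simply ``This is clear''): extend $\partial$ as a derivation from the generators $M\oplus E$, check $\partial^2=0$ on generators using that $d$ lands in $ZP(n)^{+1}$, and note that $\iota$ commutes with differentials because both are determined on generators. No gaps; the hypothesis on $d$ is used precisely where you say it is.
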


\begin{proof} This is clear.
\end{proof}

\begin{lemma}[Universal property of principal extensions]\label{propuniversalHirsch}
Let $P \sqcup_d \Gamma (E)$ be a principal extension of a free-graded operad $P = \Gamma (M)$, and let
$\varphi : P \to Q$ be a morphism of operads. A morphism
$\psi : P\sqcup_d \Gamma (E)\lra Q$ extending $\varphi$ is uniquely determined by a morphism of $\Sigma_n$-modules
$f : E\to Q(n)$  satisfying $\partial f = \varphi d$.
\end{lemma}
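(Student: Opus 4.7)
The plan is to reduce the claim to two almost tautological statements: first, the universal property of $\Gamma_{01}$ classifies morphisms of \emph{graded} operads out of $\Gamma(M \oplus E)$; second, the Leibniz rule forces compatibility with the differential to be tested only on generators.

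First I would work at the graded level. Since $P \sqcup_d \Gamma(E) = \Gamma(M \oplus E)$ as a graded operad, the adjunction $\Gamma_{01} \dashv U$ recalled in section $3$ identifies morphisms of graded operads $\Gamma(M \oplus E) \to Q$ with morphisms of graded $\Sigma$-modules $M \oplus E \to UQ$. Requiring $\psi$ to extend $\varphi$ fixes its restriction to $M$ as $\varphi|_M$, so the only remaining datum is a $\Sigma_n$-equivariant graded map $f : E \to Q(n)$, using that $E$ is homogeneous of arity $n$. This yields a unique graded-operad extension $\psi$ for every choice of $f$.

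Next I would translate differential compatibility into the stated condition $\partial f = \varphi d$. The two maps $\psi \circ \partial_{P \sqcup_d \Gamma(E)}$ and $\partial_Q \circ \psi$ are both $\psi$-derivations $\Gamma(M \oplus E) \to Q$ of degree $+1$, and the differential on the extension was built in lemma \ref{diferencialHirsch} precisely by extending via Leibniz from its values on $M \oplus E$. Hence the two derivations coincide on all of $\Gamma(M \oplus E)$ as soon as they coincide on the generating $\Sigma$-module $M \oplus E$. On $M$ the equality reduces to $\varphi \partial = \partial \varphi$, which is automatic because $\varphi$ is a dg operad map. On $E$, since the internal differential of $E$ is zero and $\partial|_E = d : E \to P(n)^{+1}$, we obtain $\psi(\partial e) = \varphi(d(e))$ and $\partial_Q \psi(e) = \partial_Q f(e)$, so the required equality becomes exactly $\varphi d = \partial f$.

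The only delicate step is the derivation-extension lemma: a degree $+1$ $\psi$-derivation out of $\Gamma(M \oplus E)$ is determined by its restriction to the generating module $M \oplus E$. This is the operadic analogue of the standard fact for free algebras and follows from the treewise description $\Gamma(M \oplus E)(l) = \dirlim_{T \in \widetilde{\tree(l)}} (M \oplus E)(T)$, by induction on the number of vertices of $T$ and the Leibniz rule applied to each grafting. Once this is granted, the bijection between dg extensions $\psi$ and maps $f$ satisfying $\partial f = \varphi d$ is immediate.
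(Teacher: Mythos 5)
Your proposal is correct and is exactly the standard argument the paper leaves implicit: the paper's own proof of Lemma \ref{propuniversalHirsch} is literally ``This is clear.'' Your two steps --- the free-operad adjunction identifying graded morphisms out of $\Gamma(M\oplus E)$ with their values on generators (with the restriction to $M$ pinned down by $\varphi$), and the Leibniz-rule/derivation-on-generators argument reducing dg-compatibility to $\partial f = \varphi d$ on $E$ --- are precisely the routine verifications being appealed to.
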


\begin{proof} This is clear.
\end{proof}

\begin{lemma}\label{liftingthroughextensions} Let $\iota: P \longrightarrow P \sqcup_d \Gamma (E)$ be an arity $n$ principal-extension and
$$
\xymatrix{
{P}   \ar[r]^{\varphi}  \ar[d]_{\iota}     &     Q \ar@{>>}[d]^{\rho}_{\wr} \\
{P\sqcup_d \Gamma(E)}  \ar[r]^{\psi}\ar@{-->}[ur]^{\psi'}  &     R
}
$$
a solid commutative diagram of operad morphisms, where $\rho$ is a surjective quasi-isomorphism.
Then, there is an operad morphism $\psi'$ making both triangles commute.
\end{lemma}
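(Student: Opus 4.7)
The plan is to invoke the universal property of principal extensions (Lemma \ref{propuniversalHirsch}) to reduce the problem to lifting a single $\Sigma_n$-equivariant map out of $E$. By that lemma, extending $\varphi$ to a morphism $\psi' : P \sqcup_d \Gamma(E) \to Q$ amounts to giving a $\Sigma_n$-equivariant degree-zero map $f : E \to Q(n)$ satisfying $\partial f = \varphi d$. To moreover ensure $\rho \psi' = \psi$, since both morphisms already agree on $P$ (via $\rho\varphi = \psi\iota$), it is enough to require $\rho f = \psi|_E$. Thus the whole lemma reduces to constructing such an $f$.

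The construction proceeds in two standard steps. First, I would lift $\psi|_E : E \to R(n)$ through the surjection $\rho$ to obtain a $\Sigma_n$-equivariant degree-zero map $g : E \to Q(n)$ with $\rho g = \psi|_E$; this is possible because $\rho$ is degree-wise surjective and $E$ is a projective $\Sigma_n$-module by Maschke's theorem (Remark \ref{projectius}). In general $g$ will not satisfy $\partial g = \varphi d$, so I consider the defect
$$
\omega \,=\, \varphi d - \partial g \,:\, E \longrightarrow Q(n)^{+1} \ .
$$
A short check shows $\omega$ is $\Sigma_n$-equivariant, $\rho\omega = 0$ (using $\partial \psi(e) = \psi(de) = \psi\iota d(e) = \varphi d(e)$ in $R$ because $\partial e = d(e)$ in $P \sqcup_d \Gamma(E)$), and $\partial \omega = 0$ (because $d$ takes values in cocycles). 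Hence $\omega$ lands in $Z(\ker\rho)(n)^{+1}$.

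The second step is to correct $g$ by a primitive of $\omega$. Since $\rho$ is a surjective quasi-isomorphism, its kernel is acyclic, so every cocycle of $\ker\rho$ is a coboundary. Using once more that $E$ is $\Sigma_n$-projective, I lift $\omega : E \to B(\ker\rho)(n)^{+1}$ along the surjection $\partial : (\ker\rho)(n) \twoheadrightarrow B(\ker\rho)(n)^{+1}$ to a $\Sigma_n$-equivariant map $h : E \to (\ker\rho)(n)$ with $\partial h = \omega$. Setting $f := g + h$ then gives a $\Sigma_n$-equivariant map with $\rho f = \psi|_E$ and $\partial f = \varphi d$, as required. Lemma \ref{propuniversalHirsch} produces the desired $\psi'$; the identities $\rho\psi' = \psi$ and $\psi'\iota = \varphi$ hold by construction on the generators $P \cup E$, hence everywhere.

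The main (and only real) subtlety is keeping all lifts $\Sigma_n$-equivariant: both the lift of $\psi|_E$ through $\rho$ and the lift of $\omega$ through $\partial$ on $\ker\rho$. In positive characteristic this would genuinely be an obstacle, but in our characteristic-zero setting it is taken care of by Maschke's theorem, which makes every $\mk[\Sigma_n]$-module projective. No tree combinatorics or arity-wise induction enters here, because the generators $E$ are homogeneous of arity $n$ and the universal property of $\Gamma$ handles the rest of the operad freely.
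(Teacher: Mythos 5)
Your proposal is correct. The reduction via the universal property (Lemma \ref{propuniversalHirsch}) to finding a $\Sigma_n$-equivariant $f:E\to Q(n)$ with $\partial f=\varphi d$ and $\rho f=\psi|_E$ is exactly the paper's reduction, and your two defect computations ($\partial\omega=0$ because $d$ lands in $ZP(n)^{+1}$, and $\rho\omega=0$ via $\rho\varphi=\psi\iota$ and the chain-map property of $\psi$ — note the last term in your parenthetical should strictly read $\rho\varphi d(e)$ rather than $\varphi d(e)$, a purely notational slip) are sound. Where you diverge from the paper is in how the two constraints are solved simultaneously: the paper packages them into a single lifting problem of $\mk[\Sigma_n]$-modules, namely lifting $\lambda=(\varphi d\ \ \psi|_E)^t:E\to ZC\rho(n)$ through $\id\oplus\rho(n):ZC\id_{Q(n)}\to ZC\rho(n)$, and proves by an explicit element chase (using $HC\rho=0$ and surjectivity of $\rho$) that this map is surjective on relative cocycles, after which one application of projectivity finishes. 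You instead lift $\psi|_E$ first, then kill the differential defect using acyclicity of $\ker\rho$ (via the long exact sequence of $0\to\ker\rho\to Q\to R\to 0$), applying Maschke-type projectivity twice. Both arguments rest on the same inputs — $\rho$ a surjective quis and $E$ projective over $\mk[\Sigma_n]$ — so the difference is one of bookkeeping: the cone formulation keeps everything in a single lift and is the form the paper reuses verbatim in the unitary case (Lemma \ref{liftingthroughextensions+}), while your lift-then-correct version avoids the mapping-cone formalism at the cost of an extra lifting step and the kernel-acyclicity observation.
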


\begin{proof} Thanks to the previous lemma \ref{propuniversalHirsch}, we can build $\psi'$ from a $\mk[\Sigma_n]$-linear map $f: E\longrightarrow Q(n)$, verifying $\rho f = \psi_{| E}$ and $\varphi d =\partial f$. The reader interested in the details can take the proof of the analogous result in \cite{CR19}, lemma 3.4, and make the obvious changes.
\end{proof}

\subsection{Lifting properties: the unitary case}

For the sake of lightening the notation, in this section $\Gamma$ refers to its restriction to unitary, connected operads, $\Gamma_+$.

\begin{definition}\label{OpdefKS+} Let $n\geq 2$ be an integer. Let $P \in \Op_+$ be free as a graded operad, $P= \Gamma (M)$, where $M $ is a graded $\Sigma$-module, with $M(0)=M(1)= 0$. A \textit{unitary arity $n$ principal extension} of $P$ is the free graded operad

$$
P \sqcup_d^\delta \Gamma (E) :=\Gamma (M \oplus E) \ ,
$$

where $E$ is an arity-homogeneous $\Sigma_n$-module with zero differential and:

\begin{enumerate}
  \item[(a)] $d:E\longrightarrow ZP(n)^{+1}$  is a map of $\Sigma_n$-modules of degree $+1$. The differential $\partial$ on $P\sqcup_d \Gamma (E)$ is built upon the differential of P, $d$ and the Leibniz rule.
  \item[(b)] $\delta_i : E \longrightarrow M(n-1), i = 1, \dots , n$ are morphisms of $\mk$-graded vector spaces, compatible with $d$ and the differential of $P$, in the sense that, for all $i=1, \dots , n$ we have commutative diagrams

      $$
      \xymatrix{
      {E} \ar[r]^{d} \ar[d]^{\delta_i}   &  {ZP(n)^{+1}} \ar[d]^{\delta_i}  \\
      {M(n-1)}   \ar[r]^{\partial}       &   {P(n-1)^{+1}} \ .
      }
      $$

      \noindent They have also to be compatible with the $\Lambda$-structure of $P$, from arity $n-1$ downwards.
\end{enumerate}
\end{definition}

\begin{lemma}\label{diferencialHirsch+} $P \sqcup_d^\delta \Gamma (E) $ is a unitary dg operad and the natural inclusion $\iota : P \longrightarrow P \sqcup_d^\delta \Gamma (E) $ is a morphism of unitary dg operads.
\end{lemma}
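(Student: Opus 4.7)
The plan is to upgrade $M\oplus E$ to a $\Lambda$-module, apply the unitary free operad functor $\Gamma_{+1}$, and then verify that the differential extends in a way compatible with all the structure. First I would endow $M\oplus E$ with a $\Lambda$-structure: on $M$ I keep the one inherited from $P=\Gamma_{+1}(M)$, and on $E$ (which is homogeneous of arity $n$) I use the prescribed maps $\delta_i : E\to P(n-1)$ of hypothesis (b) as the restriction operations. The required $\Sigma_n$-equivariance is part of the data, and because $M(0)=M(1)=0$ while $E$ lives only in arity $n$, there are no new $\Lambda$-identities between the two summands beyond those already holding in $P$. Applying $\Gamma_{+1}$ then produces a unitary graded operad, which by Lemma \ref{lamadredelcordero} coincides as a graded $\Sigma$-operad with $\Gamma(M\oplus E)$ in arities different from $0$ and $1$.

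The second step is to extend the differential of $P$ to $\Gamma_{+1}(M\oplus E)$ by declaring $\partial_{|E}:=d$ and propagating through the Leibniz rule on treewise tensor products. The only point that has to be verified here is $\partial^2=0$: on $M$ it is given, and on $E$ it reduces to $\partial\circ d$, which vanishes because $d$ takes values in $ZP(n)^{+1}$. Equivariance of $\partial$ and its compatibility with the partial composition products are automatic from the Leibniz rule, exactly as in Lemma \ref{diferencialHirsch}.

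The only genuinely new ingredient relative to the non-unitary case is the compatibility of $\partial$ with the $\Lambda$-structure. On the $M$-summand this is inherited from $P$; on $E$ it is exactly the commutative square stipulated in (b), $\partial\circ\delta_i=\delta_i\circ d$. Propagating by the Leibniz rule along trees and using the fact that the restriction operations in $\Gamma_{+1}$ are built out of the $\delta_i$ on generators together with the partial compositions (to which $\partial$ is a derivation) then shows that all restriction operations commute with $\partial$ on the whole free operad. This is the step I expect to be the main obstacle, though it is really just bookkeeping, since the obstruction to extending naively disappears once the square in (b) is assumed.

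Finally, the natural inclusion $\iota : P\to P\sqcup_d^\delta\Gamma(E)$ is induced by the $\Lambda$-submodule inclusion $M\hookrightarrow M\oplus E$; by the universal property of $\Gamma_{+1}$ it is a morphism of unitary graded operads, and since $\partial$ agrees with the original differential of $P$ on the $\Gamma_{+1}(M)$-summand and the $\Lambda$-structure restricts back to the one on $P$, the map $\iota$ is a morphism of unitary dg operads.
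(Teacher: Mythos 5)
Your proposal is correct and is essentially the paper's (implicit) argument: the paper dismisses this lemma with ``This is clear,'' and your write-up supplies exactly the routine checks intended --- $\partial^2=0$ because $d$ lands in $ZP(n)^{+1}$, propagation by the Leibniz rule along treewise composites, commutation of $\partial$ with the restriction operations coming from the commutative square in condition (b), and the inclusion $\iota$ respecting all structure. One cosmetic caveat: since the prescribed maps $\delta_i:E\to P(n-1)$ land in the operad rather than in $(M\oplus E)(n-1)$, the pair $M\oplus E$ is not literally an object of $\lModu$, so the construction should be read (as the paper's own Definition \ref{OpdefKS+} tacitly does) as extending the $\Lambda$-structure of $P$ to the free graded operad $\Gamma(M\oplus E)$ through the composition rules you invoke, rather than as an application of the functor $\Gamma_{+1}$ to a $\Lambda$-module.
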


\begin{proof} This is clear.
\end{proof}

\begin{lemma}[Universal property of unitary principal extensions]\label{propuniversalHirsch+}
Let $P \sqcup_d^\delta \Gamma (E)$ be a unitary principal extension of a free-graded unitary operad $P = \Gamma (M)$, and let
$\varphi : P \to Q$ be a morphism of unitary operads. A morphism
$\psi : P\sqcup_d^\delta \Gamma (E)\lra Q$ extending $\varphi$ is uniquely determined by a morphism of $\Sigma_n$-modules $f : E\to Q(n)$  satisfying $\partial f = \varphi d$ and making commutative the diagrams

      $$
      \xymatrix{
      {E} \ar[r]^{f} \ar[d]^{\delta_i}   &  {Q(n)} \ar[d]^{\delta_i}  \\
      {M(n-1)}   \ar[r]^{\varphi}       &   {Q(n-1)}
      }
      $$

for all $i=1, \dots , n$.
\end{lemma}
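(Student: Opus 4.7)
The plan is to mirror the proof of Lemma~\ref{propuniversalHirsch}, augmented with the bookkeeping needed for the $\Lambda$-structure. The engine is the adjunction $\Gamma_{+1} \dashv \overline{U}$: since $P \sqcup_d^\delta \Gamma(E) = \Gamma_{+1}(M \oplus E)$ as a graded unitary operad, a morphism of graded unitary operads to $Q$ is freely determined by a morphism of the generating $\Lambda$-module $M \oplus E$ into $\overline{Q}$. Because $\psi$ is required to extend $\varphi$, its restriction to $M$ is forced to be $\varphi_{|M}$, leaving only the $\Sigma_n$-equivariant map $f := \psi_{|E} : E \to Q(n)$ to specify.

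I would then translate the two required compatibilities into conditions on $f$. Applying $\partial\,\psi = \psi\,\partial$ to $e \in E$ gives $\partial f(e) = \psi(de) = \varphi(de)$, which is exactly $\partial f = \varphi d$; applying $\delta_i \psi = \psi \delta_i$ to $e \in E$ gives $\delta_i f(e) = \psi(\delta_i e) = \varphi(\delta_i e)$, which is the commutativity of the diagrams in the statement. Conversely, starting from an $f$ satisfying both conditions, the universal property of $\Gamma_{+1}$ produces a unique morphism of graded unitary operads $\psi$ extending $\varphi$ with $\psi_{|E} = f$. The fact that $\psi$ commutes with differentials on all of the free operad follows, as in Lemma~\ref{diferencialHirsch+}, by propagating $\partial \psi = \psi \partial$ from the generators via the Leibniz rule; this step is identical to the non-unitary case.

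The only genuinely new point is to verify that the $\Lambda$-compatibility $\delta_i \psi = \psi \delta_i$ also propagates from the generators $M \oplus E$ to all of $\Gamma_{+1}(M \oplus E)$, and this is where I expect the real work to lie. One must invoke the $\Lambda$-operad axioms of \cite{Fre17a} that relate the restrictions $\delta_i$ with the partial composition products $\circ_j$, and then argue by induction on the size of the tree representing a composite element: the restriction of such a composite unwinds, via those axioms, into operadic composites of restrictions of its constituent pieces. Since $\psi$ respects $\circ_j$ and, by assumption, $\varphi$ both respects $\circ_j$ and commutes with the $\delta_i$'s, the equation $\delta_i \psi = \psi \delta_i$ holds on each piece and therefore on the whole composite. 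Once this bookkeeping is carried out, the statement reduces to a direct transcription of Lemma~\ref{propuniversalHirsch} into the unitary setting.
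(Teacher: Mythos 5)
Your argument is correct and is essentially the paper's (the paper simply declares the lemma ``clear''): extend over the generators via freeness, force $\psi_{|M}=\varphi_{|M}$, translate the two compatibilities into $\partial f=\varphi d$ and $\delta_i f=\varphi\delta_i$, and propagate them to all of $\Gamma(M\oplus E)$ by the Leibniz rule and by tree induction using the axioms relating the restriction maps $\delta_i$ to the partial compositions $\circ_j$. One small caution: the adjunction $\Gamma_{+1}\dashv\overline{U}$ cannot be invoked verbatim, since the extension data $\delta_i:E\to P(n-1)=\Gamma(M)(n-1)$ need not land in the generating module $M\oplus E$, so $M\oplus E$ is not literally an object of $\lModu$; but this does no harm, because your final paragraph — building $\psi$ from the free graded ($\Sigma$-)operad property and then checking $\delta_i\psi=\psi\delta_i$ on composites by induction — is exactly the verification that is actually needed.
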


\begin{proof} This is clear.
\end{proof}

\begin{lemma}\label{liftingthroughextensions+} Let $\iota: P \longrightarrow P \sqcup_d^\delta \Gamma (E)$ be a unitary arity $n$ principal-extension and
$$
\xymatrix{
{P}   \ar[r]^{\varphi}  \ar[d]_{\iota}     &     Q \ar@{>>}[d]^{\rho}_{\wr} \\
{P\sqcup_d^\delta \Gamma(E)}  \ar[r]^{\psi}\ar@{-->}[ur]^{\psi'}  &     R
}
$$
a solid commutative diagram of morphisms of operads, where $\rho$ is a surjective quasi-isomorphism between operads with unitary multiplication. Then, there is an operad morphism $\psi'$ making both triangles commute.
\end{lemma}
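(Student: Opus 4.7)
The plan is to mirror the proof of Lemma~\ref{liftingthroughextensions}, enhanced with the additional requirement coming from the $\Lambda$-structure. By the universal property of unitary principal extensions (Lemma~\ref{propuniversalHirsch+}), producing $\psi'$ extending $\varphi$ is equivalent to producing a morphism of $\Sigma_n$-modules $f : E \to Q(n)$ satisfying $\partial f = \varphi d$ together with $\delta_i f = \varphi \delta_i$ for every $i = 1, \dots, n$. The lower triangle then commutes if and only if moreover $\rho f = \psi_{|E}$.

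First, as in the non-unitary argument, I would reformulate the conditions $\partial f = \varphi d$ and $\rho f = \psi_{|E}$ as a lifting problem in $\mk[\Sigma_n]$-modules. The pair $(\varphi d, \psi_{|E})$ defines a morphism $\lambda : E \to ZC\rho(n)$ exactly as in the proof of Lemma~\ref{liftingthroughextensions}, and the two conditions amount to lifting $\lambda$ through the natural surjection $\id \oplus \rho : ZC\id_{Q(n)} \to ZC\rho(n)$. If such a lift $\mu = (\alpha\ f)$ exists, then $f$ is the sought morphism for the differential part and the upper and lower triangle commutations go through verbatim as in the non-unitary case.

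Next, I would enhance this diagram with the $\Lambda$-structure. By part (b) of Definition~\ref{OpdefKS+}, the maps $\varphi\delta_1,\dots,\varphi\delta_n : E \to Q(n-1)$ provide prescribed restriction data compatible with $d$ and $\partial$. The problem then becomes to lift $\lambda$ through $\id\oplus\rho$ not merely into the relative cocycles, but into the sub-$\Sigma_n$-module consisting of those relative cocycles whose $\delta_i$-images in $Q(n-1)$ coincide with the already fixed elements $\varphi\delta_i(e)$. The main obstacle is precisely the surjectivity, in this $\Lambda$-enhanced setting, of the restricted map between constrained spaces of relative cocycles. In the non-unitary case, surjectivity followed directly from $\rho$ being both a quasi-isomorphism and an epimorphism; in the unitary case, one must, given an arbitrary relative cocycle for $\rho$ and prescribed $\Lambda$-data below, find a preimage in $Q(n)$ whose restrictions realize the prescribed data.

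This is exactly what the \emph{Kan-like condition} in the appendix is designed to provide, and it is here that the hypothesis $Q,R \in \Ass_+\backslash\Op$ is essential: the associative multiplication $m_2$ with $1$ as a strict unit lets one fill compatible families of lower-arity elements into a single arity-$n$ element realizing them simultaneously under the $\delta_i$, while keeping control over the image under $\rho$ and the behavior under $\partial$. Once $f$ has been constructed, the remaining verifications that $\psi'\iota = \varphi$ (upper triangle) and $\rho\psi' = \psi$ (lower triangle) are identical to those in Lemma~\ref{liftingthroughextensions}, now invoking the universal property~\ref{propuniversalHirsch+} in place of~\ref{propuniversalHirsch}.
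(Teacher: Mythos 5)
Your overall strategy is the paper's: keep the $f\colon E\to Q(n)$ produced by the non-unitary argument (Lemma~\ref{liftingthroughextensions}) and then use the Kan-like machinery of the appendix to correct it so that it also commutes with the restriction operations. Concretely, the paper measures the failure by the differences $\omega_i = \delta_i f e - \varphi\delta_i e$, checks that these are cocycles, lie in $\ker\rho$, and satisfy the Kan-like relation $\delta_i\omega_j=\delta_{j-1}\omega_i$ for $i<j$ (this uses that $\varphi,\psi,\rho$ are already $\Lambda$-morphisms), fills them by Lemma~\ref{enhancedKan-likeresult} applied to the submodule-subsimplicial complex $ZQ\cap\ker\rho$, and replaces $f$ by $f'e=fe-\omega$. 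Your formulation of this step as lifting $\lambda$ into ``the sub-$\Sigma_n$-module of relative cocycles whose $\delta_i$-images coincide with $\varphi\delta_i(e)$'' is imprecise: that constraint set depends on $e$ and is an affine subspace, not a $\Sigma_n$-submodule, so there is no module-lifting statement to invoke; the difference-and-correct formulation is what actually works, and the compatibility (Kan-like) condition on the family has to be verified, not just asserted.

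The genuine gap is equivariance. Lemma~\ref{propuniversalHirsch+} requires $f$ to be a morphism of $\Sigma_n$-modules, but the filling algorithm of Lemma~\ref{Kan-likeresult} is built from the maps $\delta_i$ and $s_i$, which are only $\mk$-linear and satisfy the twisted relations $\delta_i(\sigma\cdot\nu)=\delta_{\sigma^{-1}(i)}(\nu)$ and $s_i(\sigma\cdot\nu)=s_{\sigma^{-1}(i)}(\nu)$; consequently the correction $e\mapsto\omega(e)$ is $\mk$-linear but not $\Sigma_n$-equivariant, and your corrected $f'$ need not be a map of $\Sigma_n$-modules. Your proposal, by casting everything as a lifting problem in $\mk[\Sigma_n]$-modules, silently assumes this comes for free, which it does not. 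The paper repairs it by the standard characteristic-zero averaging, $\widetilde{\omega}(e)=\frac{1}{n!}\sum_{\sigma\in\Sigma_n}\sigma\cdot\omega(\sigma^{-1}\cdot e)$, and then checks, using $\omega_i(\sigma\cdot e)=\omega_{\sigma^{-1}(i)}(e)$, that $\widetilde{\omega}$ is still a cocycle in $\ker\rho$ with $\delta_i\widetilde{\omega}(e)=\omega_i(e)$. Your argument needs this (or an equivalent equivariance device) to be complete; with it, the rest of your plan goes through exactly as in the non-unitary case via Lemma~\ref{propuniversalHirsch+}.
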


\begin{proof} We have built $\psi'$, lifting $\psi$ and extending $\varphi$ in the non-unitary case, thanks to $f: E \longrightarrow Q(n)$, verifying $\rho f = \psi_{| E}$ and $\partial f = \varphi d$. Now, we need to check the compatibility of $f$ with the restriction operations. That is, we would like to have $\delta_i (fe) = \varphi (\delta_i e) \ , \quad \text{for all} \ i = 1, \dots , n$. This is not necessarily true, so for every $i=1, \dots, n$, we consider the differences $\omega_i = \delta_i fe - \varphi \delta_i e$ and check the following:

\begin{itemize}
  \item[(i)] For every $i$, $\omega_i$ is a cocycle:  $\partial \omega_i = \partial \delta_ife - \partial \varphi\delta_i e = \delta_i\partial fe -\delta_i\partial fe = 0$.
  \item[(ii)] For every $i$, $\omega_i $ belongs to $\ker\rho$: $\rho \omega_i = \rho \delta_i fe - \rho\varphi\delta_i e = \delta_i \rho fe -\psi\iota\delta_i e = \delta_i\psi e - \psi\delta_i e = \psi\delta_i e - \psi\delta_i e = 0$.
  \item[(iii)] The family $\left\{ \omega_i \right\}_{i=1,\dots n}$ satisfies the Kan-like condition, \ref{Kan-likecondition}: $\delta_i\omega_j = \delta_i\delta_j fe -\delta_i\varphi\delta_j e = \delta_i\delta_jfe -\delta_i\delta_j\varphi e = \delta_{j-1}\delta_i fe - \delta_{j-1}\delta_i \varphi e = \delta_{j-1} \omega_i$, for $i<j$
\end{itemize}

Here, we have used the fact that $\varphi, \rho, \psi$ are already unitary $= \Lambda$-morphisms. Hence, because of lemma \ref{enhancedKan-likeresult}, we conclude that there is $\omega \in ZQ(n) \cap \ker\rho$ such that $\delta_i \omega = \omega_i $ for all $i=1,\dots , n$. So, we change our original $fe$ into $f'e = fe - \omega$. We immediately see that we haven't lost what we had obtained in the non-unitary case, namely $\rho f'e = \psi e$ and $\partial f' e = \varphi de$ and we have added the commutativity with the restriction operations: $
\delta_i f'e = \delta_i fe - \delta_i\omega = \delta_i fe - \omega_i = \delta_ife -(\delta_i fe - \varphi\delta_i e)  = \varphi\delta_i e$.

Final problem: this $\omega$ should depend $\mk[\Sigma_n]$-linearly on $e \in E(n)$ in order to have a map $f' : E(n) \longrightarrow Q(n)$: $e \mapsto \{ \omega_1(e), \dots , \omega_n(e)\} \mapsto \omega(e)$. $\mk$-linearity is clear at both steps, just looking at the definitions of $\omega_i$ and the algorithm producing $\omega$ in the proof of lemma \ref{Kan-likeresult}. However, $\Sigma_n$-equivariance is unclear. First, $\delta_i$'s appearing in the definition of $\omega_i$'s, are \emph{not} $\Sigma_n$-equivariant. Neither $s_i$'s used in the construction of $\omega$ from $\omega_i$'s are. Instead, for $\sigma \in \Sigma_n$ and $\nu$ an arbitrary element, we have the following relations, which are easy to verify: $\delta_i (\sigma\cdot  \nu) = \delta_{\sigma^{-1}(i)} (\nu)$ and $s_i (\sigma \cdot \nu) = s_{\sigma^{-1}(i)} (\nu)$. These relations pass to $\omega_i$: $\omega_i(\sigma\cdot e) = \omega_{\sigma^{-1}(i)} (e)$.

Fortunately, we have the standard \emph{average} procedure to get a $\Sigma_n$-equivariant morphism from $\omega$: $\widetilde{\omega} (e) = \frac{1}{n!} \sum_{\sigma \in \Sigma_n} \sigma \cdot \omega (\sigma^{-1} \cdot e)$. $\widetilde{\omega} (e)$ is still a cocycle and belongs to $\ker \rho$. And we still have what we were looking for from the very beginning:

\begin{eqnarray*}
  \delta_i\widetilde{\omega} (e) &=& \delta_i\left( \frac{1}{n!} \sum_{\sigma \in \Sigma_n} \sigma \cdot \omega (\sigma^{-1} \cdot e) \right) = \frac{1}{n!} \sum_{\sigma \in \Sigma_n} \delta_i \left(\sigma \cdot \omega (\sigma^{-1} \cdot e) \right)\\
   &=& \frac{1}{n!} \sum_{\sigma \in \Sigma_n} \delta_{\sigma^{-1}(i)} \left( \omega (\sigma^{-1} \cdot e) \right) = \frac{1}{n!} \sum_{\sigma \in \Sigma_n} \omega_{\sigma^{-1}(i)} (\sigma^{-1}\cdot e) = \frac{1}{n!} \sum_{\sigma \in \Sigma_n} \omega_i(e) = \omega_i(e) \ .
\end{eqnarray*}

Therefore, we put this $\widetilde{\omega}$ instead of our first $\omega$, and we are done.
\end{proof}

\subsection{Lifting properties: conclusions}

\begin{definition} A \textit{Sullivan operad} is the colimit of a sequence of principal extensions of arities $l_n \geq 2$, starting from the initial operad.

$$
 I_\alpha \longrightarrow P_1 = \Gamma (E(l_1))  \longrightarrow  \cdots \longrightarrow  P_{n} = P_{n-1} \sqcup \Gamma (E(l_n)) \longrightarrow \dots \longrightarrow \dirlim_n P_n = P_\infty \ .
$$
\end{definition}

This definition stands for both the non-unitary and unitary cases, as long as we read it with the following dictionary, where the first option is for the non-unitary case, and the second for the unitary one: \label{diccionari} (1) $\alpha = 0, +$, (2) $P = P, P_+$, (3) $\Gamma = \Gamma_0, \Gamma_+$, (4) $\sqcup = \sqcup_d, \sqcup_d^\delta$.

The next result says that Sullivan operads are cofibrant objects in the Hinich  model structure of the category of operads, \cite{Hin97}.

\begin{proposition}\label{strict_lifting}
Let $S$ be a Sullivan operad. For every solid diagram of operads (resp., of operads with unitary multiplication)

$$
\xymatrix{
&P\ar@{->>}[d]^{\rho}_{\wr}\\
\ar@{.>}^{\varphi'}[ur]S\ar[r]^\varphi&Q&
}
$$

in which $\rho$ is a surjective quasi-isomorphism, there exists a morphism of operads (resp., of operads with unitary multiplication) $\varphi'$ making the diagram commute.
\end{proposition}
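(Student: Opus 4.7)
The plan is to run a straightforward induction on the filtration of the Sullivan operad and then pass to the colimit, using Lemma \ref{liftingthroughextensions} (non-unitary case) or Lemma \ref{liftingthroughextensions+} (unitary case) at each inductive step. Write
$$
I_\alpha = P_0 \longrightarrow P_1 \longrightarrow \cdots \longrightarrow P_n = P_{n-1}\sqcup\Gamma(E(l_n)) \longrightarrow \cdots \longrightarrow \dirlim_n P_n = S,
$$
with the convention of the dictionary \ref{diccionari}. I would build, by induction on $n$, a compatible family of morphisms $\varphi'_n : P_n \longrightarrow P$ such that $\rho\varphi'_n = \varphi_{|P_n}$ and $\varphi'_n$ extends $\varphi'_{n-1}$.

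For the base case $n=0$, the operad $P_0 = I_\alpha$ is the initial object of the ambient category (of non-unitary connected operads when $\alpha=0$, or of unitary connected operads when $\alpha=+$, which in the unitary multiplication setting is compatible with the chosen structure morphism from $\Ass_+$). Hence there is a unique morphism $\varphi'_0 : I_\alpha \longrightarrow P$, and the identity $\rho\varphi'_0 = \varphi_{|I_\alpha}$ holds automatically by the same initiality. For the inductive step, suppose $\varphi'_{n-1}$ has been produced. Then the solid square
$$
\xymatrix{
P_{n-1}\ar[r]^{\varphi'_{n-1}}\ar[d]_{\iota}   &   P\ar@{->>}[d]^{\rho}_{\wr} \\
P_n\ar[r]^{\varphi_{|P_n}}\ar@{-->}[ur]^{\varphi'_n}  &   Q
}
$$
commutes, since $\rho\varphi'_{n-1} = \varphi_{|P_{n-1}} = \varphi_{|P_n}\circ\iota$. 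Lemma \ref{liftingthroughextensions} (respectively Lemma \ref{liftingthroughextensions+}) applied to the principal extension $\iota$ then delivers a morphism $\varphi'_n$ making both triangles commute.

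For the colimit step, the compatibility $\varphi'_n\circ\iota = \varphi'_{n-1}$ guaranteed by the upper triangle at each stage means that $\{\varphi'_n\}_n$ is a cocone over the diagram defining $S$. The universal property of $\dirlim$ then produces a unique morphism $\varphi' : S \longrightarrow P$ with $\varphi'_{|P_n} = \varphi'_n$ for all $n$; and the relation $\rho\varphi' = \varphi$ follows from $\rho\varphi'_n = \varphi_{|P_n}$ for every $n$ together with the uniqueness part of the colimit universal property.

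The only real work is concentrated in the principal-extension lifting lemmas already proved; the main subtlety to keep an eye on is the unitary setting, where one must verify that each $\varphi'_n$ is a morphism in $\Ass_+\backslash\Op$ so that the hypotheses of Lemma \ref{liftingthroughextensions+} remain valid at the next step. This is automatic: $\varphi'_0$ is compatible with the structure map from $\Ass_+$ by initiality, the lemma produces $\varphi'_n$ as a morphism of unitary operads extending $\varphi'_{n-1}$, hence respecting the map from $\Ass_+$ coming through $P_{n-1}$, and this property is inherited by $\varphi'$ in the colimit.
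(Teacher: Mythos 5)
Your proposal is correct and follows exactly the paper's argument: the paper proves this proposition by induction over the tower of principal extensions defining $S$, applying Lemma \ref{liftingthroughextensions} (resp.\ Lemma \ref{liftingthroughextensions+}) at each stage and passing to the colimit. Your write-up simply makes explicit the base case, the inductive square, and the cocone argument that the paper leaves implicit.
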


\begin{proof} Induction and lemmas \ref{liftingthroughextensions}, or \ref{liftingthroughextensions+}, depending on wether we are dealing with the non-unitary, or the unitary case.
\end{proof}

\subsection{Homotopy} Similarly to the setting of commutative algebras, there is a notion of homotopy between morphisms of operads, defined via a functorial path (see Section 3.10 of \cite{MSS02}, cf. \cite{CR19}), based on the following remark.

\begin{remark} Let $P$ be a dg operad and $K$ a commutative dg algebra. Then $P \otimes K = \left\{ P(n)\otimes K\right\}_{n \geq 0}$ has a natural operad structure given by the partial composition products $(\omega \otimes a) \circ_i (\eta \otimes b) = (-1)^{|a||\eta|} (\omega\circ_i\eta)\otimes (ab)$.
\end{remark}

In particular, let $K = \mk [t,dt] = \Lambda (t,dt)$ be the free commutative dg algebra on two generators $|t| = 0$, $|dt| = 1$ and differential sending $t$ to $dt$. We have the unit $\iota $ and evaluations $\delta^0$ and $\delta^1$ at $t=0$ and $t=1$ respectively,
which are morphisms of $\Com$-algebras satisfying $\delta^0 \circ \iota=\delta^1 \circ \iota=\id$.

\[
\xymatrix{\mk\ar[r]^-{\iota}&\mk[t,dt] \ar@<.6ex>[r]^-{\delta^1} \ar@<-.6ex>[r]_-{\delta^0}&\mk}\,\,\,;\,\,\, \delta^k\circ \iota=\id  \ .
\]

The following are standard consequences of Proposition $\ref{strict_lifting}$. Proofs are easy adaptations of the analogous results in the setting of $\Com$-algebras (see Section 11.3 of \cite{GM13}; see also \cite{CR19} in the context of operad algebras).

\begin{remark} We can skip treating the non-unitary and unitary cases separately in this section, once we notice that:
\begin{itemize}
  \item[(a)] If $P_+$ is a (cohomologically) unitary operad, then so is $P_+[t,dt]$, thanks to $1 \in \mk [t,dt]$.
  \item[(b)] Whenever we state the existence of a morphism, we either use a universal property valid in both cases, or we are using the appropriate version of proposition \ref{strict_lifting} above.
\end{itemize}
\end{remark}

\begin{definition}\label{path} A \textit{functorial path} in the category of operads is defined as the functor $-[t,dt]:\Op \longrightarrow \Op $, given on objects by $P[t,dt]=P\otimes \mk[t,dt]$ and on morphisms by $\varphi [t,dt]=\varphi\otimes\mk[t,dt]$, together with the natural transformations

$$
\xymatrix{P\ar[r]^-{\iota}&P[t,dt] \ar@<.6ex>[r]^-{\delta^1} \ar@<-.6ex>[r]_-{\delta^0}&P}\,\,\,;\,\,\, \delta^k\circ \iota= \id \ ,
$$

where

$$
\delta^k=1\otimes \delta^k:P[t,dt]\longrightarrow P\otimes\mk=P
\quad \text{and} \iota=1\otimes\iota:P=P\otimes\mk\to P[t,dt].
$$
\end{definition}

The map $\iota$ is a quasi-isomorphism of operads while the maps $\delta^0$ and $\delta^1$ are
surjective \emph{quis} of operads. The functorial path gives a natural notion of homotopy between morphisms of operads:

\begin{definition}
Let $\varphi, \psi :P \longrightarrow Q$ be two morphisms of operads. A \textit{homotopy from $\varphi$ to $\psi$} is given by a morphism of operads $H:P\longrightarrow Q[t,dt]$ such that $\delta^0\circ H=\varphi$ and $\delta^1\circ H=\psi$. We use the notation $H:f\simeq g$.
\end{definition}

The homotopy relation defined by a functorial path is reflexive and compatible with the composition. Furthermore, the symmetry of $\Com$-algebras $\mk[t,dt]\lra \mk[t,dt]$ given by $t\mapsto 1-t$ makes the homotopy relation into a symmetric relation. However, the homotopy relation is not transitive in general. As in the rational homotopy setting of $\Com$-algebras, we have:

\begin{proposition}\label{transitive}
The homotopy relation between morphisms of operads is an equivalence relation for those morphisms whose source is a Sullivan operad.
\end{proposition}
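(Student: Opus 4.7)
The plan is to adapt the standard $\Com$-algebra argument (cf.\ \cite{GM13}, \S 11.3) via horn filling on the $2$-simplex. Let $H : f \simeq g$ and $K : g \simeq h$ be homotopies with common source the Sullivan operad $S$; the goal is to construct a third homotopy $L : f \simeq h$ by lifting the pair $(H,K)$ through an auxiliary surjective quasi-isomorphism and extracting the missing face.

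First, introduce the \emph{$2$-simplex path} operad
\[
Q[\Delta^2] := Q\otimes \mk[t_0,t_1,t_2,dt_0,dt_1,dt_2]/(t_0+t_1+t_2-1,\,dt_0+dt_1+dt_2),
\]
with the tensor-product operad structure from the remark preceding Definition \ref{path}. Setting $t_i=0$ and parametrizing the remaining edge by $t$ gives face morphisms $d_i : Q[\Delta^2]\to Q[t,dt]$ satisfying the simplicial identities $\delta^0 d_1=\delta^0 d_2$, $\delta^1 d_2=\delta^0 d_0$, and $\delta^1 d_1=\delta^1 d_0$. Dually, consider the horn
\[
Q[\Lambda^2_1] := Q[t,dt]\times_{Q} Q[t,dt]=\{(a,b)\mid \delta^1 a=\delta^0 b\},
\]
and let $\rho := (d_2,d_0) : Q[\Delta^2]\to Q[\Lambda^2_1]$. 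As in the $\Com$-algebra case, $\rho$ is a surjective quasi-isomorphism: surjectivity is the algebraic horn-filling property for polynomial de Rham forms (tensor $Q$ with the classical surjection of $\Omega^*_{PL}(\Delta^2)$ onto $\Omega^*_{PL}(\Lambda^2_1)$), and both source and target are quasi-isomorphic to $Q$ via the constant inclusions, so $\rho$ is a quasi-isomorphism. When $Q$ has unitary multiplication, so do $Q[\Delta^2]$ and $Q[\Lambda^2_1]$ (via $1\otimes 1$), and thus $\rho$ lives in $\Ass_+\backslash\Op$.

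Second, the equality $\delta^1 H = g = \delta^0 K$ yields, by the universal property of the pullback, a morphism $(H,K) : S\to Q[\Lambda^2_1]$. Since $S$ is Sullivan, Proposition \ref{strict_lifting} produces a lift $\widetilde L : S\to Q[\Delta^2]$ with $\rho\widetilde L=(H,K)$. Setting $L := d_1 \widetilde L : S\to Q[t,dt]$, the simplicial identities above give $\delta^0 L = \delta^0 d_2\widetilde L = \delta^0 H = f$ and $\delta^1 L = \delta^1 d_0\widetilde L = \delta^1 K = h$, so $L : f \simeq h$ is the sought composite.

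The main obstacle is step one: verifying that $\rho$ is a surjective quasi-isomorphism. This is the operadic incarnation of the algebraic horn-filling property, classical in rational homotopy theory; it remains valid after tensoring with $Q$ because $-\otimes Q$ preserves both surjections and quasi-isomorphisms in our setting.
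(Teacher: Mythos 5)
Your argument is correct, and it reaches the same target object as the paper but from a different source: the operad you must map onto, namely $Q[t,dt]\times_Q Q[t,dt]$ (pairs of paths glued via $\delta^1=\delta^0$), is exactly the pullback the paper calls $Q$ in its proof, but where the paper covers it by the \emph{square} $P[t,dt,s,ds]$, proves surjectivity by the explicit polynomial formula $\pi\bigl(p(s,ds)+q(st,dt)-q(0,0)\bigr)=(p(s,ds),q(t,dt))$, and then extracts the composite homotopy by the fold $\nabla\colon t,s\mapsto t$ (the diagonal of the square), you cover it by the $2$-simplex $Q\otimes\Omega^*_{PL}(\Delta^2)$, prove surjectivity by quoting the classical extendability of $\Omega^*_{PL}$ (restriction to the inner horn is onto) tensored with $Q$, and extract the composite homotopy as the missing face $d_1$. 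Both hinge on Proposition \ref{strict_lifting} applied to a surjective quasi-isomorphism with target the glued-paths operad, so the homotopical content is the same; your version is conceptually cleaner (the composite really is a face, and the pattern generalizes to higher horns/coherences), at the cost of importing the PL-forms extension lemma as an external input, whereas the paper's square-plus-diagonal argument is self-contained and purely computational. Two small points to make explicit: the identification $Q[t,dt]\times_Q Q[t,dt]\cong Q\otimes\Omega^*_{PL}(\Lambda^2_1)$ (needed to transport surjectivity and acyclicity) uses exactness of $-\otimes_\mk Q$ over the field $\mk$; and in the unitary case one should note that $d_i$, $\delta^k$ and $\rho$ all commute with the constant inclusion of $Q$, so they are morphisms in $\Ass_+\backslash\Op$, which is what the unitary form of Proposition \ref{strict_lifting} requires. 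Finally, you only treat transitivity; that is consistent with the paper, since reflexivity and symmetry were already observed to follow from the functorial path and the involution $t\mapsto 1-t$.
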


\begin{proof} The interested reader can take the proof of the analogous result in \cite{CR19}, proposition 3.10, and make the obvious changes.
\end{proof}

Denote by $[S,P]$ the set of homotopy classes of morphisms of operads $\varphi : S\longrightarrow P$.

\begin{proposition}\label{bijecciohomotopies}Let $S$ be a Sullivan operad. Any quasi-isomorphism
$\varpi: P\longrightarrow Q$ of operads induces a bijection $\varpi_*:[S,P] \longrightarrow [S,Q]$.
\end{proposition}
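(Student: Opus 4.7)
The plan is to prove the proposition in two stages: first when $\varpi$ is a \emph{surjective} quasi-isomorphism (so that Proposition \ref{strict_lifting} applies directly), and then reduce the general case via a mapping-path factorization. Well-definedness of the induced map $\varpi_*$ on homotopy classes is immediate: any homotopy $H: S \to P[t,dt]$ from $\varphi$ to $\varphi'$ yields the homotopy $\varpi[t,dt] \circ H : S \to Q[t,dt]$ from $\varpi\varphi$ to $\varpi\varphi'$.

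\textbf{Surjective case.} Surjectivity of $\varpi_*$ is a direct application of Proposition \ref{strict_lifting}: any $\psi : S \to Q$ lifts to some $\varphi : S \to P$ with $\varpi\varphi = \psi$. For injectivity, given $\varphi, \varphi' : S \to P$ and a homotopy $K : S \to Q[t,dt]$ from $\varpi\varphi$ to $\varpi\varphi'$, one seeks a homotopy $H : S \to P[t,dt]$ from $\varphi$ to $\varphi'$ lifting $K$. Form the pullback
$$W = (P \times P) \times_{Q \times Q} Q[t,dt]$$
along $\varpi \times \varpi$ and $(\delta^0, \delta^1)$. The data $(\varphi, \varphi', K)$ defines a morphism $S \to W$, and the canonical comparison $\Xi : P[t,dt] \to W$ given by $\alpha \mapsto (\delta^0 \alpha, \delta^1 \alpha, \varpi[t,dt]\alpha)$ produces $H$ via Proposition \ref{strict_lifting}, once $\Xi$ is shown to be a surjective quasi-isomorphism.

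The verification that $\Xi$ is a surjective quasi-isomorphism is the main technical point. For surjectivity, given $(p_0, p_1, \beta) \in W$, start with $\alpha_0 = p_0(1-t) + p_1 t$, whose restrictions at $t = 0, 1$ are $p_0, p_1$; the correction $\beta - \varpi[t,dt]\alpha_0$ lies in the ideal of $Q[t,dt]$ generated by $t(1-t)$ and $dt$, and surjectivity of $\varpi$ allows lifting this correction coefficient-wise to the analogous ideal of $P[t,dt]$. For the quasi-isomorphism property, compare the short exact sequences
$$0 \to K_P \to P[t,dt] \xrightarrow{(\delta^0, \delta^1)} P \times P \to 0 \quad \text{and} \quad 0 \to K_W \to W \to P \times P \to 0 \ ,$$
where $K_W$ identifies with $\ker[(\delta^0, \delta^1) : Q[t,dt] \to Q \times Q]$. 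Since $\varpi[t,dt]$ is a quasi-isomorphism, the five lemma (applied first to the analogous sequences for $P[t,dt]$ and $Q[t,dt]$) shows that the induced map on kernels $K_P \to K_W$ is a quasi-isomorphism; a second application of the five lemma to the two displayed sequences then gives that $\Xi$ is a quasi-isomorphism.

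\textbf{General case.} Factor $\varpi$ as $P \xrightarrow{j} M_\varpi \xrightarrow{p} Q$ via the mapping path $M_\varpi := P \times_Q Q[t,dt]$ (pullback along $\varpi$ and $\delta^0$), with $j(x) = (x, \iota\varpi(x))$ and $p(x, \beta) = \delta^1 \beta$. The first projection $\pi_1 : M_\varpi \to P$ is a surjective quasi-isomorphism (pullback of the surjective quasi-isomorphism $\delta^0$) satisfying $\pi_1 j = \id_P$, and $p$ is a surjective quasi-isomorphism by inspection together with $2$-out-of-$3$ applied to $pj = \varpi$. By the surjective case, both $p_*$ and $(\pi_1)_*$ are bijections; from $(\pi_1)_* j_* = \id$ we deduce $j_* = (\pi_1)_*^{-1}$, and therefore $\varpi_* = p_* \circ j_*$ is a bijection.
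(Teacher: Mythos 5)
Your proof is correct, and it is organized differently from the paper's, even though the two share the same basic ingredients (the lifting property \ref{strict_lifting}, the path $P[t,dt]$, and the two pullbacks: your $M_\varpi$ and $W$ are exactly the paper's mapping paths $R(\varpi)$ and $R(\varpi,\varpi)$). The paper treats an arbitrary quasi-isomorphism $\varpi$ directly in both halves: surjectivity of $\varpi_*$ is obtained by lifting through the surjective quasi-isomorphism $\psi:R(\varpi)\to Q$ and then arguing up to homotopy, and injectivity is obtained by lifting $(\varphi_0,\varphi_1,H)$ through $\overline{\varpi}:P[t,dt]\to R(\varpi,\varpi)$ only up to homotopy (using the already-established surjectivity of $\overline{\varpi}_*$) and then invoking transitivity of the homotopy relation on a Sullivan source (Proposition \ref{transitive}). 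You instead split into two stages: for surjective $\varpi$ you lift $(\varphi,\varphi',K)$ \emph{strictly} through $\Xi=\overline{\varpi}$, which gives a homotopy with the correct endpoints on the nose and so dispenses with Proposition \ref{transitive} at that step, and you then reduce the general case to the surjective one by the factorization $\varpi=p\circ j$ through $M_\varpi$ together with the retraction $\pi_1 j=\id$, concluding formally from $p_*$ and $(\pi_1)_*$ being bijections. A further point in your favour: you actually verify that $\Xi$ is a surjective quasi-isomorphism, and you correctly note that its surjectivity uses surjectivity of $\varpi$; the paper asserts ``one may verify that $\overline{\varpi}$ is a surjective quasi-isomorphism'' for a general quasi-isomorphism $\varpi$, which as stated is only safe when $\varpi$ is surjective (the paper's argument survives because it only uses surjectivity of $\overline{\varpi}_*$ on homotopy classes, supplied by its first half). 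What each approach buys: the paper's is shorter and stays at the level of homotopy classes throughout; yours is slightly longer but each lifting is strict, the technical claims are proved rather than asserted, and the surjective/general dichotomy makes transparent exactly where surjectivity is needed. Minor elision worth making explicit: in the general case, $j$ is a quasi-isomorphism (by two-out-of-three from $\pi_1 j=\id_P$ and $\pi_1$ a quasi-isomorphism) before you can apply two-out-of-three to $pj=\varpi$ to conclude that $p$ is one.
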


\begin{proof} See the proof of the analogous result in \cite{CR19}, proposition 3.11.
\end{proof}

\section{Minimal models}

Sullivan \emph{minimal} operads are Sullivan operads for which the process of adding new generators $E$ is done with strictly increasing arities. In this section, we prove the existence and uniqueness of Sullivan's minimal models for operads in our two cases mentioned above: (cohomologically) non-unitary and unitary.

\begin{definition}\label{defSullmin} A \textit{Sullivan minimal operad} $P_\infty$ is the colimit of a sequence of principal extensions starting from the initial operad, ordered by strictly increasing arities

$$
I_\alpha \longrightarrow P_2 = \Gamma (E(2))  \longrightarrow  \cdots \longrightarrow  P_{n} = P_{n-1} \sqcup \Gamma (E(n)) \longrightarrow \dots \longrightarrow \dirlim_n P_n = P_\infty \ ,
$$

with $E(n)$ an arity $n$ homogeneous $\Sigma_n$-module with zero differential. A \emph{Sullivan minimal model} for an operad $P$ is a Sullivan minimal operad $P_\infty$ together with a quasi-isomorphism $\rho: P_\infty \stackrel{\sim}{\longrightarrow} P$.
\end{definition}

\begin{remarks}
\begin{itemize}
\item[(1)] The same conventions as in \ref{diccionari} apply here, so this definition works for both cases: non-unitary and unitary one.
\item[(2)] In particular, a Sullivan minimal operad is a free graded operad $P_\infty = \Gamma (E)$, with $E = \bigoplus_n E(n)$, plus an extra condition on its differential $\partial$, usually called being \emph{decomposable}. The interested reader can check that both definitions, as a colimit of principal extensions or as a free graded operad plus a decomposable differential, agree by looking at \cite{GNPR05}, proposition 4.4.1. Even though this second characterization is useful in practice to recognize a Sullivan minimal operad, we are not going to use it in this paper.
\end{itemize}
\end{remarks}

\subsection{Existence. The non-unitary case}

\begin{theorem}\label{existencia} Every cohomologically non-unitary $HP(0) = 0$ and cohomologically connected $HP(1)\allowbreak = \mk$ operad $P$ has a Sullivan minimal model $P_\infty \stackrel{\sim}{\longrightarrow} P$. This minimal model is non-unitary $P_\infty(0) = 0$ and connected $P_\infty(1) = \mk$.
\end{theorem}

\begin{proof} Let $P$ be a cohomologically connected, cohomologically non-unitary operad. This is Markl's case \cite{MSS02}, with the slight improvement that we are just assuming $HP(0) =0$ instead of $P(0) = 0$. We are going first to write down the proof for this case, and then comment on the modifications needed for the cohomologically unitary case.

Here, we use the free operad functor $\Gamma = \Gamma_0$ and start with $E = E(2) = HP(2)$. Take a $\mk[\Sigma_2]$-linear section $s_2 : HP(2) \longrightarrow ZP(2) \subset P(2)$ of the projection $\pi_2 : ZP(2) \longrightarrow HP(2)$, which exists because $\mk$ is a characteristic zero field, and define:

$$
P_2 = \Gamma (E) \ , \quad {\partial_2}_{|E} = 0 \ , \qquad \text{and}\qquad \rho_2 : P_2 \longrightarrow P \ , \quad {\rho_2}_{|E} = s_2 \ .
$$

It's clear that $P_2$ is a dg operad with differential $\partial_2 = 0$ and $\rho_2$ a morphism of dg operads. Also it is a \emph{quis} in arities $\leq 2$ because:

\begin{itemize}
  \item[(0)] $P_2(0) = \Gamma (E)(0) = 0 = HP(0)$, because of lemma \ref{lamadredelcordero} (a),
  \item[(1)] $P_2(1) = \Gamma (E)(1) = \mk = HP(1) $, because of lemma \ref{lamadredelcordero} (a), and
  \item[(2)] $P_2(2) = \Gamma (E)(2) = E(2) = HP(2)$, because of lemma \ref{lamadredelcordero} (b).
\end{itemize}

Assume we have constructed a morphism of dg operads $\rho_{n-1} : P_{n-1} \longrightarrow P$  in such a way that:

\begin{enumerate}
\item $P_{n-1}$ is a minimal operad, and
\item $\rho_{n-1} : P_{n-1} \longrightarrow P$ is a {\it quis\/} in arities $\leq n-1$.
\end{enumerate}

To build the next step, consider the $\Sigma_n$-module of the relative cohomology of $\rho_{n-1} (n): P_{n-1}(n) \longrightarrow P(n)$, $E = E(n) = H(P_{n-1}(n),P(n))$. Since we work in characteristic zero, every $\Sigma_n$-module is projective. So we have a $\Sigma_n$-equivariant section $s_n = (d_n \ f_n)$ of the projection $P_{n-1}(n)^{+1} \oplus P(n) \supset Z(P_{n-1}(n),P(n))\longrightarrow H(P_{n-1}(n),P(n))$. Let

$$
\begin{pmatrix}
  -\partial_{n-1} (n)          & 0 \\
  -\rho_{n-1}(n) & \partial (n)
\end{pmatrix}
$$

be the differential of the mapping cone $C\rho_{n-1}(n)$: the cocycle condition implies that $
\partial_{n-1}(n)d_n = 0$ and $ \rho_{n-1}d_n = \partial_n(n)f_n$. That is, $d_n$ induces a differential $\partial_n$ on $P_n = P_{n-1} \sqcup_{d_n} \Gamma (E)$ and $f_n$ a morphism of operads $\rho_n : P_n \longrightarrow P$ such that ${\rho_n}_{|P_{n-1}} = \rho_{n-1}$ and ${\rho_n}_{|E'} = f_n$, because of lemmas \ref{diferencialHirsch} and \ref{propuniversalHirsch}.

The verification that $\rho_n$ induces an isomorphism in cohomology ${\rho_n}_* : H P_n(m) \longrightarrow HP(m)$ in arities $m= 0,\dots , n$ relies on lemma \ref{lamadredelcordero}. First, if $m < n$, $\rho_n (m) = \rho_{n-1} (m)$ by lemma \ref{lamadredelcordero}  and so, by the induction hypothesis, we are done. Again by lemma \ref{lamadredelcordero} in arity $n$, we have:

$$
\rho_n (n) = (\rho_{n-1}(n) \quad f_n ) : P_{n-1}(n) \oplus E(n) \longrightarrow P(n) \ .
$$

Checking that this $\rho_n (n)$ is a \emph{quis} is now an easy exercise.
\end{proof}

\subsection{Existence. The unitary case}

\begin{theorem}\label{existencia+} Every cohomologically unitary $HP(0) = \mk$, cohomologically connected $HP(1) =\mk$ operad with unitary multiplication $P\in \Mag_+ \backslash \Op $, has a Sullivan minimal model $P_\infty \stackrel{\sim}{\longrightarrow} P$. This minimal model is unitary $P_\infty(0) = \mk$, connected $P_\infty(1) = \mk$ and has a unitary multiplication.
\end{theorem}

\begin{proof} Let $P$ be a cohomologically connected and cohomologically unitary operad. We make the most of the non-unitary construction we have already built, but now using $\Gamma = \Gamma_+$. To that process we have to add two things:

\begin{enumerate}
  \item[(1)] A $\Lambda$-structure on the new generators $E$, and
  \item[(2)] The need to check the compatibility of the differentials $d_n$ and the successive extensions of our \emph{quis} $f_n$ with this $\Lambda$-structures.
\end{enumerate}

So, starting with $E = E(2) = HP(2)$, we have natural choices for the restriction operations: the ones induced in cohomology by those of $HP(2)$. Next, we have to find a section $s'_2$ that makes the following diagram commute for $i=1,2$:

$$
\xymatrix{
{E =HP(2)} \ar[d]^{\delta_i} \ar@{.>}@/^1pc/[r]^{s'_2}   &  {ZP(2)}  \ar[d]^{\delta_i} \ar[l]^{\pi_2}     \\
{I_+(1) = \mk = HP(1)}  \ar@/^1pc/[r]^{s_1}    &  {ZP(1)} \ar[l]^-{\pi_1}
}
$$

Here, the section $s_1$ is the unique morphism from the initial operad $I_+$ and the $\Lambda$-structure on $E$ is the one induced by $\delta_i : P(2) \longrightarrow P(1)$ on cohomology. Notice that $s_1$ is necessarily a section of $\pi_1$. So we are seeking a section $s'_2$ that satisfies $\delta_i s'_2 = s_1 \delta_i$ for $ i=1,2$, so the induced morphism $\rho_2 : P_2 = \Gamma (E) \longrightarrow P$ to be unitary.

We work as before: given $e\in E$, and any section $s_2$ obtained from the non-unitary case, we check that the elements $\omega_i = \delta_i s_2 e - s_1\delta_i e, i = 1,2$ are cocycles satisfying the Kan condition \ref{Kan-likecondition}. Moreover: they are coboundaries: $  \pi_1\omega_i = \pi_1 (\delta_is_2 e - s_1\delta_i e) =  \pi_1\delta_i s_2 e - \pi_1 s_1 \delta_i e   = \delta_i\pi_2s_2 e - \pi_1 s_1 \delta_i e =  \delta_i e - \delta_i e = 0$ for $i = 1,2$. Here we have used the fact that the projections onto the cohomology $\pi_1, \pi_2$ are $\Lambda$-morphisms and $s_1, s_2$ are sections of them. Hence, \ref{enhancedKan-likeresult} tells us that there is a coboundary $\partial\omega \in BP(2)$ such that $\delta_i \partial\omega = \omega_i, i=1,2$. So, we subtract this $\partial\omega$ from our previous, arbitrary section $s_2$: $s'_2 e = s_2 e - \partial\omega$, perform the average procedure, and check we still have a \emph{quis}.

Assume we have already extended the $\Lambda$-structure with a compatible \emph{quis} up to arity $n-1, n>2$. For the next step, we have to produce a $\Lambda$-structure on $E = E(n) = HC\rho_{n-1}(n) = H(P_{n-1}(n), P(n))$ and a compatible section $s'_n = (d'_n \ f'_n)^t$; that is, making the following diagram commute for $i=1,\dots , n$:

$$
\xymatrix{
{E} \ar[d]^{\delta_i} \ar@{.>}@/^1pc/[r]^-{s'_n}    & {Z(P_{n-1}(n), P(n))}  \ar[d]^{\delta_i} \ar[l]^-{\pi_n}     \\
M_{n-1}(n-1) \ar[r]^-{s_{n-1}}     &  {Z(P_{n-1}(n-1), P(n-1))}
}
$$

Here $s_{n-1} = (\partial (n-1) \ \rho(n-1))^t$. So again we need our section $s'_n$ to verify $
\delta_i s'_n = s_{n-1}\delta_i, \ , \quad \text{for} \ i=1,\dots , n$, since then the induced morphism $\rho_n : P_n = P_{n-1} \sqcup_d^\delta \Gamma (E) \longrightarrow Q$ will be a unitary operad morphism. An always available choice for the $\Lambda$-structure on $E$ is $\delta_i = 0$ for all $i=1, \dots , n$ (see remark \ref{unitsremainforever} below). Hence we have to produce a section $s'_n$ such that $\delta_i s'_n = 0, i = 1, \dots , n$: take the section $s_n$ we got from the non-unitary case, $e \in E$ and compute: $\pi_{n-1}\delta_i s_n e = 0$. Here, $\pi_{n-1} : Z(P_{n-1}(n-1), P(n-1)) \longrightarrow H(P_{n-1}(n-1), P(n-1)) = 0$, because $\rho(n-1)$ is a \emph{quis}, by the induction hypothesis. So we have $\omega_i$ such that $\delta_is_n e = \partial\omega_i$, for $i=1, \dots , n$. These $\partial \omega_i$ verify the Kan condition \ref{Kan-likecondition}, hence we get some $\partial \omega$ such that $\delta_i\partial\omega = \partial\omega_i, i =1, \dots , n$, because of \ref{enhancedKan-likeresult}. Therefore, we start with some arbitrary section $s_n$  obtained in the non-unitary case and rectify it with this $\partial\omega$: $s'_ne = s_ne - \partial\omega$. Then we perform the fancy average procedure to get a $\Sigma_n$-equivariant $\omega$ and check that everything works as it is supposed to work.
\end{proof}

\begin{remark}\label{unitsremainforever} Notice that the action of the units on $(P_+)_\infty$ we have built

$$
\delta_i = \_ \circ_i 1 : (P_+)_\infty(n) \otimes (P_+)_\infty(0) \longrightarrow (P_+)_\infty(n-1)\ , n > 1 \ , i=1, \dots , n
$$

reduces to the following cases:

\begin{itemize}
  \item[(a)] For $n=1$, it is the isomorphism $\mk \otimes \mk \longrightarrow \mk $.
  \item[(b)] For $n =2$, it is just the induced action from $P_+$: $HP_+(2) \otimes HP_+(0) \longrightarrow HP_+(1)$
  \item[(c)] For $n>2$, it is the trivial action $\omega \mapsto \omega\circ_i 1 = 0$.
\end{itemize}
\end{remark}

\subsection{Uniqueness. The non-unitary case} The following lemma provides a proof of the uniqueness up to isomorphism of  minimal models. It is inspired in \cite{HT90}, definition 8.3, and theorem 8.7. It also inspired a categorical definition of minimal objects: see \cite{Roi93} and \cite{Roi94c}, cf. \cite{GNPR10}.

\begin{lemma}\label{seccio} Let $P_\infty$ be a Sullivan minimal operad and $\rho : Q \longrightarrow P_\infty$ a \emph{quis} of operads. Then there is a section $\sigma : P_\infty \longrightarrow Q$, $\rho\sigma = \id_{P_\infty} $.
\end{lemma}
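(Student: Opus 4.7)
The plan is to proceed by transfinite induction along the sequence of principal extensions that defines $P_\infty = \dirlim_n P_n$, constructing a compatible family of operad morphisms $\sigma_n : P_n \longrightarrow Q$ such that $\rho\sigma_n = \iota_n$, where $\iota_n : P_n \hookrightarrow P_\infty$ denotes the canonical inclusion. The base case is forced by the initial object property: $I_\alpha$ is initial in the ambient category (non-unitary connected or unitary connected, according to the dictionary of \ref{diccionari}), so both $\sigma_1 : I_\alpha \longrightarrow Q$ and the equality $\rho\sigma_1 = \iota_1$ are automatic.

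For the inductive step, given $\sigma_{n-1}$ with the desired property, the universal property of principal extensions (Lemmas \ref{propuniversalHirsch} and \ref{propuniversalHirsch+}) reduces the construction of $\sigma_n : P_n = P_{n-1} \sqcup \Gamma(E(n)) \longrightarrow Q$ to exhibiting a $\mk[\Sigma_n]$-linear map $f : E(n) \longrightarrow Q(n)$ satisfying $\partial_Q f(e) = \sigma_{n-1}(d e)$, $\rho f(e) = e$ (so that $\rho\sigma_n$ still equals $\iota_n$ on the new generators), and, in the unitary case, $\delta_i f(e) = \sigma_{n-1}(\delta_i e)$ for every $i = 1,\dots ,n$. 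The candidate lift comes from the relative cone: a direct computation using $\partial_{P_\infty} e = d e$ and $\rho\sigma_{n-1} = \iota_{n-1}$ shows that the pair $(\sigma_{n-1}(d e), e) \in Q(n)^{+1}\oplus P_\infty(n)$ is a relative cocycle of $\rho(n)$, and since $\rho$ is a quasi-isomorphism, $HC\rho(n) = 0$, so this cocycle is a coboundary whose primitive provides $f(e)$. The $\Sigma_n$-equivariance of $f$ is then imposed by the averaging procedure of Theorem \ref{existencia+}, and in the unitary case the compatibility with the restriction operations $\delta_i$ is obtained by applying the Kan-like result (Lemma \ref{enhancedKan-likeresult}) to the differences $\delta_i f(e) - \sigma_{n-1}(\delta_i e)$, exactly as in the unitary existence proof. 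Passing to the colimit $\sigma = \dirlim_n \sigma_n : P_\infty \longrightarrow Q$ then yields the section.

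I expect the main obstacle to be guaranteeing the \emph{strict} equality $\rho f(e) = e$ rather than equality modulo a coboundary, because $\rho$ is not assumed surjective. The naive primitive extracted from $C\rho(n)$ only produces $\rho f(e) = e + \partial v$ for some $v \in P_\infty(n)^{-1}$; eliminating the spurious term $\partial v$ requires adjusting the primitive by a cocycle $\alpha \in ZQ(n)$ with $\rho\alpha = \partial v$, which must be produced from the surjectivity of $\rho_*$ on cohomology together with additional cocycle-level bookkeeping along the lines of the closing step of the proof of Lemma \ref{liftingthroughextensions}. Once this adjustment is accommodated at each arity, the remaining ingredients ($\Sigma_n$-equivariance and, in the unitary case, the $\Lambda$-compatibility via averaging and the Kan-like argument) go through exactly as in Theorems \ref{existencia} and \ref{existencia+}.
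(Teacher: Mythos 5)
Your overall strategy (induct along the principal extensions, reduce to a $\Sigma_n$-equivariant map $f:E(n)\to Q(n)$ with $\partial f(e)=\sigma_{n-1}(d e)$ and $\rho f(e)=e$) matches the shape of the paper's argument, but the step you yourself flag as the main obstacle is a genuine gap, and the fix you gesture at does not work. The acyclicity of the cone $C\rho(n)$ only gives a primitive with $\rho f(e)=e+\partial v$ for some $v\in P_\infty(n)$, and to remove $\partial v$ you would need a cocycle $\alpha\in ZQ(n)$ with $\rho\alpha=\partial v$ \emph{on the nose}. Surjectivity of $\rho_*$ on cohomology cannot produce this: since $[\partial v]=0$, it only yields cocycles of $Q$ mapping to elements cohomologous to $\partial v$, i.e.\ you are back to the same error term. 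The ``closing step'' of Lemma \ref{liftingthroughextensions} that you invoke is precisely the place where surjectivity of $\rho$ is used (to choose $q''$ with $r'=\rho q''$), and in Lemma \ref{seccio} $\rho$ is \emph{not} assumed surjective — that is the whole point of the lemma, since it is later applied to an arbitrary quis between minimal operads. So as written the induction step does not close.

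The paper resolves this by a different device, which is the missing idea: set $Q_{n-1}=\im\,\sigma_{n-1}\subset Q$ (isomorphic to $P_{n-1}$ because $\sigma_{n-1}$ has the retraction $\rho$), and compare the short exact sequences of the pairs $(Q_{n-1}(n),Q(n))$ and $(P_{n-1}(n),P_\infty(n))$. The induced map $Q(n)/Q_{n-1}(n)\to P_\infty(n)/P_{n-1}(n)\cong E(n)$ is then a quis onto a complex with zero differential, so $Z(Q(n)/Q_{n-1}(n))\to E(n)$ is onto and admits a $\Sigma_n$-equivariant section $s$; one then lifts $(s\ i)$ through the map $Q(n)\to (Q(n)/Q_{n-1}(n))\times_{E(n)}P_\infty(n)$, whose surjectivity follows from the inductive identity $\rho\sigma_{n-1}=\id$ (not from any surjectivity of $\rho$), using only projectivity of $\Sigma_n$-modules. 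This produces $f$ with $\rho f(e)=e$ strictly by construction, and the compatibility $\partial f(e)=\sigma_{n-1}(d_n e)$ then comes for free from $\pi(n)f(e)=s(e)$ being a cocycle of the quotient, which forces $\partial f(e)\in Q_{n-1}(n)$ and identifies it via $\rho$. Note also that the statement you are proving is the non-unitary Lemma \ref{seccio} (starting from $I_0$); the $\Lambda$-compatibility via Lemma \ref{enhancedKan-likeresult} belongs to the separate unitary Lemma \ref{seccio+}, and even there it is a rectification performed \emph{after} a strict section-on-generators has been secured by the quotient/pullback argument, not a substitute for it.
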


\begin{proof} We are going to build the section $\sigma : P_\infty \longrightarrow Q$ inductively on the arity:

$$
\xymatrix{
{I_0}\ar[r]\ar@{.>}[rrrrrd]_{\sigma_1} &{P_2}\ar[r]\ar@{.>}[rrrrd]^{\sigma_2}  &{\dots}\ar[r] &{P_n}\ar[r]\ar@{.>}[rrd]^{\sigma_n} &  {\dots}  \ar[r]  & {P_\infty} \ar@/^/@{.>}[d]^{\sigma}   \\
                 &  &  &  &  &  {Q} \ar[u]^{\rho}
}
$$

in such a way that:

\begin{itemize}\label{secciosigma}
\item[(1)] $\rho\sigma_n = \id_{P_n}$ (note that, because of the minimality, $\im \rho\sigma_n \subset P_n$), and
\item[(2)] ${\sigma_n}_{| P_{n-1}} = \sigma_{n-1}$.
\end{itemize}

So, we start with the universal morphism $\sigma_1 : I_0 \longrightarrow Q$ from the initial operad $I_0$ to $Q$. It's clear that $\rho\sigma_1 = \id_{I_0}$. Let us assume that we have already constructed up to $\sigma_{n-1} : P_{n-1} \longrightarrow Q$ satisfying conditions above $(1)$ and $(2)$ and let us define $\sigma_n : P_n \longrightarrow Q$ as follows: first, take the $\Sigma$-module $Q_{n-1} := \im (\sigma_{n-1}: P_{n-1} \longrightarrow Q)$. By the induction hypothesis, $\sigma_{n-1}$ is a monomorphism, so $\sigma_{n-1}: P_{n-1} \longrightarrow Q_{n-1}$ is an isomorphism of $\Sigma$-modules and $\rho_{\vert Q_{n-1}}$ its inverse.

Next, consider the following commutative diagram of $\Sigma_n$-modules,

$$
\begin{CD}
0 @>>> Q_{n-1}(n)     @>>>   Q(n)     @>>> Q(n)/Q_{n-1}(n)   @>>>  0 \\
@. @VV\rho(n)_{| Q_{n-1}} V @VV\rho (n) V @VV\overline{\rho} (n)V @.\\
0 @>>> P_{n-1}(n) @>>> P_\infty(n) @>>> P_\infty(n)/P_{n-1}(n) @>>> 0
\end{CD}  \ .
$$

in which the horizontal rows are exact. As we said, the first column is an isomorphism, and the second a {\it quis\/}. So the third column is also a {\it quis\/}. By minimality and lemma \ref{lamadredelcordero}, $P_\infty(n) = P_n(n) = P_{n-1} \oplus E(n) $. Hence, $P_\infty(n) / P_{n-1}(n) \cong E(n)$, with zero differential. So we have an epimorphism of $\Sigma_n$-modules, $Z(Q(n)/Q_{n-1}(n)) \longrightarrow H(Q(n)/Q_{n-1}(n)) \cong E(n)$. Take a section $s: E(n) \longrightarrow Z(Q(n)/Q_{n-1}(n))$ and consider the pull-back of $\Sigma_n$-modules

$$
\xymatrix{
{Q(n)}\ar@/^2pc/[drr]^{\rho(n)}\ar@/_2pc/[ddr]_{\pi(n)}\ar@{.>}[dr]^{(\pi(n)\ \rho(n)))}    &    &      \\
    & {(Q(n)/Q_{n-1}(n)) \times_{E(n)} P_\infty (n)} \ar[r]\ar[d]    &  {P_\infty(n)}  \ar[d]   \\
    & {Q(n)/Q(n-1)}  \ar[r]^{\overline{\rho(n)}}  &  {P_\infty(n)/P_{n-1}(n) \cong E(n)} \ .
}
$$

It is an easy exercise to check that the induced morphism $(\pi (n) \ \rho (n) )$ is an epimorphism.

Let $i: E(n) \hookrightarrow P_\infty(n)$ denote the inclusion. We can lift $(s \ i )$ in the diagram

$$
\xymatrix{
                        &  {Q(n)}\ar[d]^{(\pi(n)\ \rho(n))} \\
E(n) \ar@{.>}[ur]^{f} \ar[r]_-{(s\ i)} & {(Q(n)/Q_{n-1}(n)) \times_{E(n)} P_\infty(n)}
}
$$

to a morphism $f : E(n) \longrightarrow Q(n)$ such that $(\pi (n) \ \rho (n) ) \circ f = (s \ i)$. Note that here we are using bare projectiviness to lift morphisms, since we don not need them to commute with any differentials at this stage. Finally, define $\sigma_n : P_n \longrightarrow Q$ by $
\sigma_{n \vert P_{n-1}} = \sigma_{n-1}$ and $\sigma_{n \vert E(n)} = f $.

According to the universal property of principal extensions \ref{propuniversalHirsch}, in order to check that $\sigma_n$ is a morphism of operads, we only need to prove that $\sigma_{n-1}d_n e = \partial_{Q(n)} f e$ for every $e \in E(n)$. This is again an easy exercise, as it is to check that $\rho\sigma_n = \id_{P_n}$.
\end{proof}

\subsection{Uniqueness. The unitary case}

\begin{lemma}\label{seccio+} Let $P_\infty$ be a unitary Sullivan minimal operad and $\rho : Q \longrightarrow P_\infty$ a \emph{quis} of operads, where $Q$ has a unitary multiplication. Then, there is a unitary section $\sigma : P_\infty \longrightarrow Q$, $\rho\sigma = \id_{P_\infty} $.
\end{lemma}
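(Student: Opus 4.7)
The plan is to mimic the inductive construction of Lemma~\ref{seccio} arity by arity, but at each step correct the lifting it produces so that it becomes compatible with the restriction operations $\delta_i$, in exactly the same spirit as the proof of Theorem~\ref{existencia+}. I build a tower of partial sections $\sigma_n : P_n \longrightarrow Q$ satisfying $\rho\sigma_n = \id_{P_n}$ and ${\sigma_n}_{|P_{n-1}} = \sigma_{n-1}$, with each $\sigma_n$ being a morphism of \emph{unitary} operads. The initial step $\sigma_1 : I_+ \longrightarrow Q$ is forced by the universal property of the initial object $I_+$ of $\Op_{+1}$, and exists since $Q$ is a unitary operad.

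For the inductive step, suppose $\sigma_{n-1}$ has been constructed as a unitary morphism. The non-unitary Lemma~\ref{seccio} first yields a $\Sigma_n$-equivariant map $f : E(n) \longrightarrow Q(n)$ satisfying $\rho(n) f = i$ and $\partial f = \sigma_{n-1} d_n$. I then need to enforce
$$
\delta_i f(e) = \sigma_{n-1}\delta_i(e), \qquad i = 1,\dots,n,
$$
so I measure the failure by $\omega_i(e) = \delta_i f(e) - \sigma_{n-1}\delta_i(e) \in Q(n-1)$. The same three verifications carried out inside the proof of Lemma~\ref{liftingthroughextensions+} show that the $\omega_i(e)$ are cocycles, lie in $\ker \rho(n-1)$, and jointly satisfy the Kan-like condition $\delta_i \omega_j = \delta_{j-1}\omega_i$ for $i<j$, using the $\Lambda$-relations prescribed on $E$ inside the principal extension $P_n = P_{n-1}\sqcup_{d_n}^{\delta}\Gamma(E)$ together with the fact that both $\sigma_{n-1}$ and $\rho$ are already unitary morphisms.

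The hard part is exactly this correction: producing a single $\omega(e) \in ZQ(n)\cap \ker\rho(n)$ whose restrictions hit the prescribed $\omega_i(e)$. This is precisely where the hypothesis that $Q$ has unitary multiplication enters, allowing me to invoke the enhanced Kan-like result from the appendix (Lemma~\ref{enhancedKan-likeresult}) to produce such an $\omega(e)$. Replacing $f(e)$ by $f'(e) = f(e) - \omega(e)$ leaves $\rho f' = i$ and $\partial f' = \sigma_{n-1} d_n$ intact, because $\omega$ is a cocycle in $\ker\rho$, while acquiring the desired $\Lambda$-compatibility.

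Finally, to restore $\Sigma_n$-equivariance after the correction I would run the same averaging $\widetilde{\omega}(e) = \tfrac{1}{n!}\sum_{\sigma\in\Sigma_n} \sigma\cdot \omega(\sigma^{-1}\cdot e)$ used at the end of Lemma~\ref{liftingthroughextensions+}; both $\widetilde{\omega}(e)\in ZQ(n)\cap\ker\rho(n)$ and $\delta_i\widetilde{\omega}(e) = \omega_i(e)$ survive the averaging. Setting ${\sigma_n}_{|P_{n-1}} = \sigma_{n-1}$ and ${\sigma_n}_{|E(n)} = f'$ defines, by the universal property of unitary principal extensions (Lemma~\ref{propuniversalHirsch+}), a morphism of unitary operads $\sigma_n : P_n \longrightarrow Q$ with $\rho\sigma_n = \id_{P_n}$, and the colimit $\sigma = \dirlim_n \sigma_n$ is the sought unitary section.
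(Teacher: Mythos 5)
Your proposal is correct and follows essentially the same route as the paper: take the non-unitary section $f$ from Lemma~\ref{seccio}, measure the failure of $\Lambda$-compatibility by the differences $\omega_i$, check they are cocycles in $\ker\rho$ satisfying the Kan-like condition, correct $f$ by the element supplied by Lemma~\ref{enhancedKan-likeresult} (this is exactly where the unitary multiplication on $Q$ is used), and average for $\Sigma_n$-equivariance before invoking Lemma~\ref{propuniversalHirsch+}. The only cosmetic difference is that your uniform formula $\omega_i(e)=\delta_i f(e)-\sigma_{n-1}\delta_i(e)$ treats all arities at once, whereas the paper separates $n=2$ (induced $\Lambda$-structure on $E(2)$) from $n>2$ (trivial $\Lambda$-structure).
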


\begin{proof} We follow the same strategy as in the previous results: starting with section $f$ already built for the non-unitary case,

\begin{equation}\label{eqseccio+}
\xymatrix{
                        &  {Q(n)}\ar[d]^{(\pi(n)\ \rho(n))} \\
E(n) \ar[ur]^{f} \ar[r]_-{(s\ i)} & {(Q(n)/Q_{n-1}(n)) \times_{E(n)} P_\infty(n) \ ,}
}
\end{equation}

we rectify it in order to make it compatible with the $\Lambda$-structures. Let us start with the case $n=2$. Diagram \ref{eqseccio+} reduces to

$$
\xymatrix{
                        &  {ZQ(2) \subset Q(2)}\ar[d]^{\id} \\
{E(2) \cong HQ(2)} \ar@{.>}[ur]^{f} \ar[r]_{s_2} & {ZQ(2) \subset Q(2) \ .}
}
$$

Hence $f=s_2$ and the problem boils down to proving that section $s_2$ can be chosen to be compatible with the $\Lambda$-structures. Again, this means studying the differences $\omega_i = \delta_is_2e - s_1\delta_ie \ , \quad \text{for} \ i =1,2$, for $e \in E(2)$. Here, $s_1 : HP_\infty (1) = \mk \longrightarrow ZQ(1)$ is the unique lifting of the isomorphism $\rho_*(1) : HP_\infty(1) \longrightarrow HQ(1)$ to $ZQ(1)$ that sends $\id \in HP_\infty (1)$ to $\id \in ZQ(1)$.

$$
\xymatrix{
{E(2)} \ar[r]^{s_2}\ar[d]^{\delta_i} & {ZQ(2)}  \ar[d]^{\delta_i}  \\
{HP_\infty(1) = HQ(1)} \ar[r]^-{s_1} &  {ZQ(1)\ .}
}
$$

Again, these $\omega_i$ don not need to be zero, but it is easy to verify that they are cocycles, satisfy the Kan-like condition \ref{Kan-likecondition} and belong to $\ker\rho$. Let's check this last statement: $  \rho\omega_i = \rho\delta_is_2e - \rho s_1\delta_i e = \delta_i\rho s_2 e - \rho s_1 \delta_i e = \delta_i e - \delta_i e = 0  $, for $i =1,2$. Here we have used the fact that $s_2$ and $s_1 $ are already sections of $\rho$, which we know from the non-unitary case, and that $\rho$ is a morphism of unitary operads.

Hence, because of lemma \ref{enhancedKan-likeresult} there is some $\omega \in ZQ(2) \cap \ker \rho$ such that $\delta_i \omega = \omega_i, i = 1,2$ and we use it to rectify our previous choice of a section: $ s'_2 e = s_2 e  - \omega$. Apply the average procedure to turn $s'$ into a $\Sigma_2$-equivariant morphism, and we are done with $n=2$.

Assume we have already built the section $\sigma$ up to the $n-1, n>2$, stage of the minimal operad $P_\infty$. Now the $\Lambda$-structure on $E(n)$ is the trivial one $\delta_i = 0, i=1, \dots n$. So we have to rectify our $f$ in \ref{eqseccio+} to verify $\delta_i f = 0, i = 0, \dots , n$. Again, for $e\in E(n)$ we don't necessary have $\omega_i = \delta_i fe = 0, i =0, \dots , n$. Nevertheless, $\pi \omega_i = \delta_i \pi fe = \delta_i se = s\delta_i e = 0$, because of the trivial $\Lambda$-structure on $E(n)$. Here we have used the fact that we could also have previously rectified also $s$ into a morphism of $\Lambda$-structures---a verification which we leave to the conscientious reader. Therefore, $\delta_ife \in Q_{n-1}(n-1), i =1, \dots , n$. Again these $\delta_ife$ verify the Kan condition \ref{Kan-likecondition}, and belong to $\ker\rho$. So, because of lemma \ref{enhancedKan-likeresult}, we get $\omega \in Q_{n-1}(n) \cap \ker\rho$ such that $\delta_i\omega = \omega_i$ and we redefine our $f$ from the non-unitary case as usual as $f'e = fe - \omega$. Again, we average to produce a $\Sigma_n$-equivariant $\widetilde{\omega}$ and check that everything works as it is supposed to.
\end{proof}

\subsection{Uniqueness: conclusions}

From lemmas \ref{seccio} and \ref{seccio+}, uniqueness follows at once from both the non-unitary and unitary cases.

\begin{proposition}\label{quisimplicaiso} Let $\rho : P_\infty \longrightarrow P'_\infty$ be a \emph{quis} between minimal Sullivan operads. Then, $\rho$ is an isomorphism.
\end{proposition}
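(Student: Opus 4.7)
The plan is to deduce the result from the section lemmas (\ref{seccio} in the non-unitary case, \ref{seccio+} in the unitary case) by applying them twice and then using a two-out-of-three argument. The two cases are handled simultaneously since the conclusions of the two lemmas are formally identical.

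First I would apply the appropriate section lemma to $\rho\colon P_\infty \longrightarrow P'_\infty$: since $P'_\infty$ is a Sullivan minimal operad and $\rho$ is a \emph{quis}, there exists a morphism of operads $\sigma\colon P'_\infty \longrightarrow P_\infty$ with $\rho\sigma = \id_{P'_\infty}$. In the unitary setting, $P_\infty$ is built by unitary principal extensions starting from $I_+$, so it has unitary multiplication, and the applicable section lemma \ref{seccio+} delivers a unitary $\sigma$.

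Next, since $\rho$ is a \emph{quis} and $\rho\sigma = \id_{P'_\infty}$ is (trivially) a \emph{quis}, the two-out-of-three property for quasi-isomorphisms of complexes, applied arity-wise, forces $\sigma$ to be a \emph{quis} as well. Now I would apply the section lemma a second time, this time to the \emph{quis} $\sigma\colon P'_\infty \longrightarrow P_\infty$, whose target $P_\infty$ is also a Sullivan minimal operad. This yields a morphism $\tau\colon P_\infty \longrightarrow P'_\infty$ satisfying $\sigma\tau = \id_{P_\infty}$.

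The conclusion is then a short calculation:
$$
\tau = \id_{P'_\infty}\,\tau = (\rho\sigma)\tau = \rho(\sigma\tau) = \rho\,\id_{P_\infty} = \rho \ .
$$
Hence $\rho\sigma = \id_{P'_\infty}$ and $\sigma\rho = \sigma\tau = \id_{P_\infty}$, so $\rho$ is an isomorphism with inverse $\sigma$. There is no real obstacle here: the entire weight of the argument is carried by Lemmas \ref{seccio} and \ref{seccio+}, and the only point to watch is that, in the unitary case, both applications of the lemma stay within operads with unitary multiplication, which is guaranteed because minimal Sullivan operads built via $\Gamma_{+1}$ and unitary principal extensions automatically come equipped with such a structure.
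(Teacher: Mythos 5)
Your argument is correct and matches the paper's own proof: the paper likewise applies the section lemmas \ref{seccio}/\ref{seccio+} to $\rho$, uses two-out-of-three to see the section $\sigma$ is a \emph{quis}, and applies the lemma again to $\sigma$, concluding that $\sigma$ (being split mono and split epi) is an isomorphism. Your explicit computation $\tau=\rho$ is just a slightly more detailed way of phrasing that same final step.
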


\begin{proof} Because of the previous lemmas \ref{seccio} and \ref{seccio+}, $\rho$ has a section $\sigma$, which by the two out of three property is also a \emph{quis}. So $\sigma$ also has a section and it is both a monomorphism and an epimorphism.
\end{proof}

\begin{theorem}\label{unicitat} Let $\varphi :P_\infty \lra P$ and $\varphi': P'_\infty \lra P$ be two Sullivan minimal models of $P$. Then there is an isomorphism $\psi : P_\infty \lra P'_\infty$, unique up-to-homotopy, such that $\varphi' \psi\simeq \varphi$.
\end{theorem}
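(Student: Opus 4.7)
The plan is to deduce the theorem as a fairly formal consequence of Proposition \ref{bijecciohomotopies} and Proposition \ref{quisimplicaiso}, which together play the role of the Whitehead-type theorem in this setting.

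First, I would produce the morphism $\psi$ as follows. Since $P_\infty$ is a Sullivan operad and $\varphi' : P'_\infty \longrightarrow P$ is a quasi-isomorphism, Proposition \ref{bijecciohomotopies} gives a bijection
$$
\varphi'_* : [P_\infty, P'_\infty] \longrightarrow [P_\infty, P] \ .
$$
In particular, the class $[\varphi] \in [P_\infty, P]$ has a unique preimage $[\psi] \in [P_\infty, P'_\infty]$. Any representative $\psi : P_\infty \longrightarrow P'_\infty$ of this class then satisfies $\varphi' \psi \simeq \varphi$, and any two such representatives are homotopic by construction.

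Next I would verify that $\psi$ is a quasi-isomorphism. Homotopic morphisms of operads induce the same map on cohomology: if $H : \psi_0 \simeq \psi_1$ is a homotopy, then $\psi_k = \delta^k \circ H$ for $k=0,1$, and since both $\delta^0$ and $\delta^1$ are quasi-isomorphisms (and left inverses to the quasi-isomorphism $\iota$), the equality $(\delta^0)_* = (\delta^1)_*$ on $H(Q[t,dt])$ forces ${\psi_0}_* = {\psi_1}_*$. Applied to $\varphi' \psi \simeq \varphi$, this yields $\varphi'_* \circ \psi_* = \varphi_*$ on cohomology. Since $\varphi$ and $\varphi'$ are quasi-isomorphisms, $\varphi_*$ and $\varphi'_*$ are isomorphisms in each arity, and therefore so is $\psi_*$; hence $\psi$ is a quasi-isomorphism.

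Finally, I would invoke Proposition \ref{quisimplicaiso}: a quasi-isomorphism between Sullivan minimal operads is an isomorphism. This immediately upgrades $\psi$ to an isomorphism. Uniqueness up to homotopy of $\psi$ was already built into the first step via the bijection $\varphi'_*$. The argument is formally identical in the non-unitary and unitary cases, since both Propositions \ref{bijecciohomotopies} and \ref{quisimplicaiso} have been established in each setting with the appropriate reading of the dictionary in \ref{diccionari}. I do not expect a genuine obstacle here; the only point that merits care is the homotopy-invariance of cohomology, which is standard and follows cleanly from the properties of the functorial path of Definition \ref{path}.
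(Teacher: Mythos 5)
Your proposal is correct and follows essentially the same route as the paper: produce $\psi$ by lifting $\varphi$ up to homotopy through the quasi-isomorphism $\varphi'$, conclude that $\psi$ is a quis by two-out-of-three, and upgrade it to an isomorphism via Proposition \ref{quisimplicaiso}. Your explicit appeal to Proposition \ref{bijecciohomotopies} (which also delivers uniqueness up to homotopy via injectivity) and your spelled-out homotopy-invariance of cohomology are just more detailed versions of the steps the paper cites in one line.
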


\begin{proof} The existence of $\psi$ follows from the up-to-homotopy lifting property \ref{strict_lifting}. It is a \emph{quis} because of the $2$-out-of-$3$ property and hence an isomorphism because of the previous proposition \ref{quisimplicaiso}.
\end{proof}

\section{Miscellanea}

In this section, we develop some corollaries relating the minimal models of $P_+$ and $P$, and establishing their relationship with up-to-homotopy algebras. Namely:

\begin{itemize}
  \item[(6.1)] We compare the minimal models of a unitary operad $P_+$ and its non-unitary truncation $P$.
  \item[(6.2)] We relate the minimal model of a unitary operad $(P_+)_\infty$ and the up-to-homotopy $P_+$-algebras with strict units.
  \item[(6.3)] For the case of the unitary associative operad, we compare our minimal model $su\Ass_\infty = \Ass_{+\infty}$ with the one of up-to-homotopy algebras with up-to-homotopy units, $hu\Ass_\infty$.
  \item[(6.4)] Here we extend some results of our previous paper \cite{CR19}, that we could not address there for the lack of minimal models for unitary operads.
  \item[(6.5)] We complete the results of \cite{GNPR05} concerning the formality of operads, so as to include unitary operads.
\end{itemize}

\subsection{Minimal models of an operad and its unitary extension} Let $P$ be an operad admitting a unitary extension $P_+$. We clearly have a split exact sequence of $\Sigma$-modules

$$
0 \longrightarrow P \longrightarrow P_+ \longrightarrow \mk [1] \longrightarrow 0 \ .
$$

Here, $P \longrightarrow P_+$ is the canonical embedding, $\mk [1] = \mk$ the $\Sigma$-module which is just a $\mk$-vector space on one generator $1$ in arity-degree $(0,0)$ and zero outside and $P_+ \longrightarrow \mk [1]$ the projection of $\Sigma$-modules that sends $P(l)$, $ l > 0$ to zero and the identity on $P_+(0) = \mk$. We could also write $P_+ = P \oplus \mk [1]$ as $\Sigma$-modules.

\begin{proposition}\label{twominimalmodels} For every cohomologically unitary, cohomologically connected and with unitary multiplication operad $P_+$ we have an isomorphism of operads $(P_+)_\infty = (P_\infty)_+ $.
\end{proposition}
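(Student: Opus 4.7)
The plan is to relate the two minimal models via truncation. The crucial input is Lemma \ref{lamadredelcordero} together with the remark following it: for any $\Lambda$-module $M$ with $M(0) = M(1) = 0$, one has the identity $\Gamma_{+1}(M) = (\Gamma_{01}(M))_+$, or equivalently $\tau \Gamma_{+1}(M) = \Gamma_{01}(M)$. Combined with the isomorphism of categories $\tau : \Op_{+1} \longleftrightarrow \mathbf{\Lambda}\Op_{01}/\Com : (\ )_+$ recalled in Section~2, this tells us that truncation carries unitary principal extensions to non-unitary principal extensions and commutes with the sequential colimits appearing in Sullivan constructions.

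Starting from the Sullivan tower $I_+ \to (P_+)_2 \to \cdots \to (P_+)_\infty$ produced by Theorem \ref{existencia+}, I would apply $\tau$ term by term. Each unitary principal extension $(P_+)_n = (P_+)_{n-1} \sqcup_d^\delta \Gamma_{+1}(E(n))$ truncates to a non-unitary principal extension $\tau (P_+)_n = \tau (P_+)_{n-1} \sqcup_d \Gamma_{01}(E(n))$, and the colimit becomes $\tau(P_+)_\infty$. Thus $\tau(P_+)_\infty$ is a Sullivan minimal operad in the non-unitary sense, built from $I_0 = \tau I_+$ by principal extensions of strictly increasing arity. Moreover, the quasi-isomorphism $\rho_+ : (P_+)_\infty \to P_+$ truncates to a morphism $\tau \rho_+ : \tau(P_+)_\infty \to \tau P_+ = P$ which remains a quasi-isomorphism, since cohomology is computed arity-wise and truncation only discards arity $0$ (where both sides vanish by construction).

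Hence $\tau(P_+)_\infty$ is a Sullivan minimal model of $P$, and by the uniqueness Theorem \ref{unicitat} it is isomorphic to $P_\infty$. Applying the unitary extension functor $(\ )_+$ to this isomorphism, and using the fact that $(\ )_+ \circ \tau = \mathrm{id}$ on the subcategory of unitary connected operads, we conclude
\[
(P_+)_\infty \;=\; (\tau (P_+)_\infty)_+ \;\cong\; (P_\infty)_+ \ ,
\]
which is the desired isomorphism of operads.

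The points that require care are (i) verifying that $\tau$ indeed sends the unitary Sullivan tower to a non-unitary Sullivan tower, in particular that it commutes with principal extensions and with the sequential colimit, and (ii) checking that the $\Lambda$-structure on $P_\infty$ implicit in the expression $(P_\infty)_+$ is exactly the one inherited from the truncation of $(P_+)_\infty$. Both are direct consequences of the isomorphism of categories $\tau \leftrightarrow (\ )_+$, together with Lemma \ref{extensionfunctor}; I do not expect either of them to be a genuine obstacle, since the delicate compatibility work for $\Lambda$-structures has already been done in the proof of Theorem \ref{existencia+}.
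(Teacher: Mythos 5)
Your overall strategy is the paper's argument run in reverse: the paper starts from the non-unitary tower for $P_\infty$, equips it with the $\Lambda$-structure of Remark \ref{unitsremainforever}, applies $(\ )_+$ stage by stage, and identifies the result with the construction of Theorem \ref{existencia+}, using Lemma \ref{extensionfunctor} to commute $(\ )_+$ with the colimit; you instead truncate the unitary tower and then invoke uniqueness. The truncation part of your argument is fine: $\tau\Gamma_{+1}(M\oplus E)=\Gamma_{01}(M\oplus E)$ with the same differential, the arity-wise stabilizing colimit passes through $\tau$, and $\tau\rho_+$ is a quasi-isomorphism onto $P$, so $\tau\bigl((P_+)_\infty\bigr)$ is indeed a Sullivan minimal model of $P$.

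The gap is in the last step, where you "apply the unitary extension functor $(\ )_+$ to this isomorphism." The functor $(\ )_+$ is defined on $\mathbf{\Lambda}\Op_{01}/\Com$, i.e.\ on operads \emph{with} their restriction operations and on morphisms that commute with them. Theorem \ref{unicitat} (via the non-unitary Proposition \ref{strict_lifting} and Lemma \ref{seccio}) only produces an isomorphism $\psi:\tau\bigl((P_+)_\infty\bigr)\to P_\infty$ in $\Op_{01}$; nothing in that construction makes $\psi$ compatible with the $\delta_i$'s, and compatibility is not automatic — on arity $\geq 3$ the restriction operations of both sides are nonzero on decomposable elements (they are only trivial on generators), and $\psi$ need not preserve generators. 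So this is emphatically not "a direct consequence of the isomorphism of categories $\tau\leftrightarrow(\ )_+$": that isomorphism identifies unitary morphisms with $\Lambda$-morphisms, but it does not upgrade an arbitrary non-unitary isomorphism between two $\Lambda$-operads to a $\Lambda$-morphism. To close the gap you must either (a) do what the paper effectively does, namely choose $P_\infty$ with the rectified ($\Lambda$-compatible) sections of Theorem \ref{existencia+}, so that $\tau\bigl((P_+)_\infty\bigr)$ \emph{is} $P_\infty$ with its inherited $\Lambda$-structure and no comparison isomorphism is needed, or (b) perform the comparison at the unitary level, i.e.\ note that $(P_\infty)_+\to P_+$ is a quasi-isomorphism of unitary operads (again using that $\rho:P_\infty\to P$ is a $\Lambda$-morphism, which holds for the intended $\Lambda$-structure) and apply the unitary versions of the lifting and uniqueness statements, whose rectifications rest on the Kan-like Lemma \ref{enhancedKan-likeresult} and hence on the unitary multiplication hypothesis. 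Either repair works, but as written the step does not follow.
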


In particular, we have an isomorphism of $\Sigma$-modules $(P_+)_\infty = P_\infty \oplus \mk [1]$.

\begin{proof} For every $P_+$, its truncation $P$ is a $\Lambda$-operad. and this structure passes to its minimal model $P_\infty$ (see remark \ref{unitsremainforever}). So, we have a unitary extension $(P_\infty)_+$. Let us see how this unitary extension agrees with the minimal model of $P_+$. Indeed, $P_\infty$ is a colimit of principal extensions

$$
I_0 \longrightarrow P_2 = \Gamma \left( E(2) \right) \longrightarrow \dots \longrightarrow P_n = \Gamma \left(\bigoplus_{n\geq 2} E(n)\right)\longrightarrow \dots \longrightarrow \dirlim_n P_n = P_\infty
$$

starting with the non-unitary initial operad $I_0$. For the same reasons we just remarked about $P_\infty$, all these operads $P_n$ have unitary extensions. So we can take the unitary extension of the whole sequence

$$
I_+ \longrightarrow (P_2)_+ = \Gamma (E(2))_+  \longrightarrow \dots \longrightarrow (P_n)_+ = \Gamma \left( \bigoplus_{n\geq 2} E(n) \right)_+    \longrightarrow \dots \longrightarrow (\dirlim_n P_n)_+ = (P_\infty)_+ \ .
$$

But, as we noticed in lemma \ref{extensionfunctor}, the functor $(\ )_+$ commutes with colimits, so $(P_\infty)_+ = (\dirlim_n P_n)_+ = \dirlim_n (P_{n+}) = (P_+)_\infty$.
\end{proof}

\subsection{Minimal models and up-to-homotopy algebras}

In the non-unitary case, the importance of these minimal models $P_\infty$ is well-known: they provide a \emph{strictification} of up-to-homotopy $P$-algebras. That is, up-to-homotopy $P$-algebras are the same as regular, \emph{strict} $P_\infty$-algebras. One way to prove it is the following: first, we have a commonly accepted definition for up-to-homotopy $P$-algebras, at least for \emph{Koszul} operads. Namely, the one in \cite{GK94}:

\begin{definition}(\cite{GK94}, see also \cite{LoVa12}) Let $P$ be a Koszul operad. Then an \emph{up-to-homotopy} $P$-algebra is an algebra over the Koszul resolution (model) $\Omega P^{\text{\text{!`}}}$ of $P$.
\end{definition}

So one proves that $\Omega P^{\text{!`}} \stackrel{\sim}{\longrightarrow} P$ is a minimal model of $P$, in the sense that it is unique up to isomorphism (\cite{LoVa12}, corollary 7.4.3). Since  Markl's minimal model \emph{à la Sullivan} $P_\infty \stackrel{\sim}{\longrightarrow} P$ is also minimal and cofibrant, we necessarily have an isomorphism $P_\infty = \Omega P^{\text{!`}}$ (see \cite{Mar96}). Next, one has to check that this definition as $\Omega P^{\text{!`}}$-algebras also agrees with the definitions through \lq\lq equations" in the particular cases. For instance, one has to check that $\Ass_\infty = \Omega\Ass^{\text{!`}}$-algebras are the same as $A_\infty$-algebras, defined as dg modules, together with a sequence of $n$-ary operations $\mu_n : A^{\otimes n} \longrightarrow A ,\  n \geq 2 ,\  |\mu_n| = 2 - n $, satisfying the equations $\partial (\mu_n) = \sum_{\substack{ p+q+r = n  \\  p+1+r=m}} (-1)^{qr+p+1} \mu_m \circ_{p+1} \mu_q $. (See \cite{LoVa12}, lemma 9.2.1.)

We would like to say that the same is true in the unitary case, in other words, to prove a theorem such as

\begin{theorem} Up-to-homotopy $P_+$-algebras with strict unit are the same as $(P_+)_\infty$-algebras.
\end{theorem}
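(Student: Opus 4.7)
The plan is to follow the template that works in the non-unitary case (as recalled by the author just before the theorem statement). First I would fix a working definition of an up-to-homotopy $P_+$-algebra with strict unit as an algebra over a cofibrant resolution $Q \stackrel{\sim}{\longrightarrow} P_+$ in the category $\Op_{+1}$ of unitary connected operads with a unitary multiplication, for instance the curved bar-cobar (Koszul) resolution of Hirsh--Millès \cite{HM12} / Burke \cite{Bur18} when $P_+$ is suitably Koszul, or more generally any cofibrant replacement in a Hinich-type model structure on $\Op_{+1}$. The natural candidate is of course the Sullivan minimal model $(P_+)_\infty$ produced by Theorem \ref{existencia+}, which by construction has strict units inherited from its $\Lambda$-structure (Remark \ref{unitsremainforever}).

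Second, I would use the uniqueness machinery of Section~5 to show that the notion is independent of the chosen cofibrant resolution in $\Op_{+1}$: given any cofibrant $Q \stackrel{\sim}{\longrightarrow} P_+$ with strict units, Proposition \ref{strict_lifting} produces a unitary operad morphism $(P_+)_\infty \longrightarrow Q$ over $P_+$; by 2-out-of-3 it is a quasi-isomorphism, and then Proposition \ref{quisimplicaiso} (or Theorem \ref{unicitat}, once $Q$ itself is refined to a minimal model) forces it to be an isomorphism at the minimal stage. Hence $(P_+)_\infty$ is, up to isomorphism of unitary operads, the unique minimal model and computes the category of up-to-homotopy $P_+$-algebras with strict units.

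Third, to make contact with the classical equation-based definitions, I would verify the identification explicitly for $P_+ = \Ass_+$: by the strictification lemma for free operads (Lemma \ref{lamadredelcordero}) the generators of $(\Ass_+)_\infty$ in arity $n \geq 2$ are operations $\mu_n$ of degree $2-n$, the differential on the generators produces the usual $A_\infty$-relations $\partial(\mu_n) = \sum (-1)^{qr+p+1} \mu_m \circ_{p+1} \mu_q$ exactly as in \cite{LoVa12}, Lemma~9.2.1, and the $\Lambda$-structure fixed in Remark \ref{unitsremainforever} translates into $\mu_2 \circ_i 1 = \id$ and $\mu_n \circ_i 1 = 0$ for $n > 2$; this recovers precisely Stasheff's $A_\infty$-algebras with strict units $su\Ass_\infty = (\Ass_+)_\infty$.

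The main obstacle is the very first step: pinning down \emph{definitionally} what an up-to-homotopy $P_+$-algebra with strict unit should be, independently of a choice of resolution. In the non-unitary Koszul setting one appeals to $\Omega P^{\text{!`}}$, but its unitary analogue requires curved Koszul duality and is not available in our generality (we only assume $HP_+(0)=HP_+(1)=\mk$ and the existence of a unitary multiplication, not quadraticity or Koszulness). The only truly robust way around this is to take \emph{as} the definition of up-to-homotopy $P_+$-algebra with strict unit an algebra over $(P_+)_\infty$, and then justify this choice \emph{a posteriori} via the uniqueness theorem together with the explicit match for $\Ass_+$ above; once the definitional issue is granted, the rest of the proof is formal.
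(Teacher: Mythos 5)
Your diagnosis of the obstacle is exactly the paper's own: there the general statement is offered only as a desideratum (``a theorem such as\dots''), and the author explicitly declines to prove it in that generality because no accepted, resolution-independent definition of up-to-homotopy $P_+$-algebra with strict unit exists; the theorem is then proved only for $P_+ = \Ass_+$, using the equation-based Kontsevich--Soibelman definition of a strictly unital $A_\infty$-algebra. The substantive part of your proposal, your third step, is essentially the paper's entire proof: one checks that the unit $1 \in \Ass_{+\infty}(0)$ acts as the strict-unit equations demand, and this follows from Remark \ref{unitsremainforever} --- in arity $2$ the partial composition with $1$ is the one induced from $\Ass_+$, giving $\mu_2\circ_i 1 = \id$, while in arities $n>2$ it is trivial, giving $\mu_n\circ_i 1 = 0$. (The identification of the generators and of the $A_\infty$-relations rests on $(\Ass_+)_\infty = (\Ass_\infty)_+$, i.e.\ Proposition \ref{twominimalmodels} together with the known non-unitary $\Ass_\infty$, rather than on Lemma \ref{lamadredelcordero} alone.)

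Your first two steps go beyond the paper and need care. Declaring up-to-homotopy $P_+$-algebras with strict unit to \emph{be} $(P_+)_\infty$-algebras makes the general statement true by convention, not by proof --- which is precisely why the paper restricts to $\Ass_+$. More seriously, ``any cofibrant resolution'' is too loose: the paper invokes no model structure on unitary operads (this is exactly Hinich's arity-zero problem recalled in the introduction), and Section 6.3 stresses that $su\Ass_\infty$ is minimal and cofibrant only in $\Ass_+\backslash\Op$, not in $\Op$; the lifting property \ref{strict_lifting} is available only against surjective quasi-isomorphisms of operads \emph{with unitary multiplication}. In particular, citing the Hirsh--Mill\`es curved bar-cobar resolution as a possible source for the definition is off the mark: that resolution is $hu\Ass_\infty$, whose algebras have up-to-homotopy units, and the paper points out it is \emph{not} isomorphic to $su\Ass_\infty$. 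Strictness of units is not preserved by arbitrary resolutions in $\Op$; it must be built into the ambient category (the functor $\Gamma_{+1}$ and the fixed $\Lambda$-structure), which is the whole point of the paper's construction, so your comparison-of-resolutions argument can only be run inside $\Ass_+\backslash\Op$ from the start.
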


However, one important ingredient is missing: we lack a common, accepted definition for up-to-homotopy $P_+$-algebras with strict units. To the best of our knowledge, such a definition exists only for the operad $u\Ass = \Ass_+ $. For instance, the one in \cite{KS09}, definition 4.1.1 (cf. \cite{Lyu11}, \cite{Bur18}):

\begin{definition} An $A_\infty$-algebra $A$ is said to have a \emph{strict unit} if there is an element $1\in A$ of degree zero such that $\mu_2(1,a) = a = \mu_2(a,1)$ and $\mu_n(a_1, \dots , 1, \dots, a_n) = 0$ for all $n\neq 2$ and $a, a_1, \dots , a_n \in A$.
\end{definition}

So we prove our theorem for the only case currently available: $P_+ = \Ass_+$.

\begin{theorem} $A_\infty$-algebras with strict unit are the same as $\Ass_{+\infty}$-algebras.
\end{theorem}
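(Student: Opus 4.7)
The plan is to unfold what an $\Ass_{+\infty}$-algebra amounts to in concrete terms and match the resulting data against the definition of strictly unital $A_\infty$-algebra recalled just above. Thanks to the groundwork already laid in the paper, this should reduce to a direct dictionary translation: the unitary extension structure on the minimal model is tailored precisely to encode the strict unit axioms, and there is no need to reinvent a notion of up-to-homotopy compatibility.

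The first step is to invoke Proposition \ref{twominimalmodels} to write $\Ass_{+\infty} = (\Ass_\infty)_+$. This identifies the generating $\Lambda$-module: in arity $n \geq 2$ the generators are Markl's $\mu_n$ carrying the same $A_\infty$ differential, while in arity zero we have the forced unit $1 \in \mk$. Crucially, Remark \ref{unitsremainforever} computes the composition products of the $\mu_n$'s with this unit explicitly:
$$
\mu_2 \circ_1 1 = \id = \mu_2 \circ_2 1 \ , \qquad \mu_n \circ_i 1 = 0 \ \text{for all } n > 2, \ i = 1,\dots,n \ .
$$

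Next I would unfold a morphism of operads $\varphi : \Ass_{+\infty} \longrightarrow \Endop_A$. By the universal property of unitary principal extensions (Lemma \ref{propuniversalHirsch+}) applied inductively through the tower defining $(\Ass_\infty)_+$, such a $\varphi$ is equivalent to the data of: (i) an element $1_A := \varphi(1) \in \Endop_A(0) = A$; (ii) operations $\mu_n^A := \varphi(\mu_n) : A^{\otimes n} \longrightarrow A$ for $n \geq 2$ satisfying the $A_\infty$ relations, since $\varphi$ commutes with differentials and respects the internal compositions $\mu_m \circ_{p+1} \mu_q$; and (iii) the compatibility $\varphi(\mu_n \circ_i 1) = \mu_n^A \circ_i 1_A$. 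Substituting the formulas from step two, condition (iii) translates exactly into
$$
\mu_2^A(1_A, a) = a = \mu_2^A(a, 1_A) \ , \qquad \mu_n^A(a_1, \dots, 1_A, \dots, a_n) = 0 \ \text{for } n > 2 \ ,
$$
which are the strict unit axioms from \cite{KS09}. The converse direction reassembles these same data into a morphism of unitary operads via the same universal property.

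The explicit computation underlying Remark \ref{unitsremainforever} is the only substantive input; everything else is bookkeeping. I do not anticipate any real obstacle, since the whole point of constructing $(P_+)_\infty$ as a unitary extension in Theorem \ref{existencia+}, with the trivial $\Lambda$-structure imposed on generators in arity $n > 2$, was precisely to make the strict unit conditions of \cite{KS09} visible as the operadic relations $\mu_n \circ_i 1 = 0$ inside the minimal model itself.
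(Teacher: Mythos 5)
Your proposal is correct and follows essentially the same route as the paper: the whole argument rests on Remark \ref{unitsremainforever} (together with the identification $\Ass_{+\infty}=(\Ass_\infty)_+$ and the known equivalence of $\Ass_\infty$-algebras with $A_\infty$-algebras), which shows that $1$ acts on $\mu_2$ as a strict unit and kills the higher $\mu_n$, so an $\Ass_{+\infty}$-algebra structure on $A$ is exactly a strictly unital $A_\infty$-structure. Your unfolding of the morphism $\Ass_{+\infty}\longrightarrow \Endop_A$ through the universal property of unitary principal extensions just makes explicit the bookkeeping that the paper's proof leaves implicit.
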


\begin{proof} We just have to prove that the unit $1 \in \Ass_{+\infty}(0)$ acts as described in the definition.  Namely, $\mu_2 \circ_1 1 = \id = \mu_2 \circ_2 1$ and $\mu_n \circ_i 1 = 0$, for $
n > 2$, and $i = 1, \dots , n$. As for the first equations, because of remark \ref{unitsremainforever}, partial composition products

$$
\circ_i : \Ass_{+\infty} (2) \otimes \Ass_{+\infty} (0) \longrightarrow \Ass_{+\infty} (1) \ , \quad i = 1, 2 \ ,
$$

are induced by $\circ_i : \Ass_+(2) \otimes \Ass_+(0) \longrightarrow \Ass_+ (1) \ , \quad i = 1, 2
$, which verify said identities. As for the rest of the equations, for $n>2$, again because of remark \ref{unitsremainforever}, partial composition products $\circ_i : \Ass_{+\infty} (n) \otimes \Ass_{+\infty} (0) \longrightarrow \Ass_{+\infty} (n-1)$ are trivial.
\end{proof}

\subsection{Strict units and up-to-homotopy units}

Here we compare two minimal models of a unitary operad $P_+$: the one with \emph{strict units} that we developed $(P_+)_\infty$, and the one with \emph{up-to-homotopy units} that we find for the case of the unitary associative operad in \cite{HM12} or \cite{Lyu11}. We use the notations $su\Ass_\infty = \Ass_{+\infty}$ and $hu\Ass_\infty$, respectively.

As \cite{HM12} mentions, $su\Ass_\infty$ \emph{cannot} be cofibrant, nor minimal, since if it were, we would have two \emph{quis} $su\Ass_\infty \stackrel{\sim}{\longrightarrow} u\Ass \stackrel{\sim}{\longleftarrow}   hu\Ass_\infty $ and hence, by the up-to-homotopy lifting property and the fact that both are minimal, we would conclude that both operads $su\Ass_\infty$ and $hu\Ass_\infty$ should be isomorphic, which we know they are not, just by looking at their presentations. We are suppressing the generators given by the actions of the permutations groups in order to lighten the formulas

$$
hu\Ass_\infty = \Gamma ( \{\mu_n^S\}_{S,n\geq2}) \ ,
$$

(see \cite{HM12}, \cite{Lyu11}) and

$$
su\Ass_\infty = \Gamma_+ (\{\mu_n\}_{n\geq 2}) = \dfrac{\Gamma  (1, \{\mu_n\}_{n\geq 2})}{\langle \mu_2 \circ_1 1 - \id,\ \mu_2 \circ_2 1 - \id, \ \{\mu_n \circ_i 1\}_{n\geq 2, i=1,\dots, n}\rangle }
$$

Nevertheless, we have indeed proven that $su\Ass_\infty$ is a minimal and cofibrant operad. And it is of course, but \emph{as an operad with unitary multiplication}, in $\Mag_+ \backslash \Op_+$. Even though \emph{it is not as an operad} in $\Op$. Indeed, looking at its second presentation, with the free operad functor $\Gamma$, we see that it seems to lack the first condition of minimality, i.e., being free as a graded operad. Again, there is no contradiction at all: it is free graded \emph{as a unitary operad}; that is, in $\Mag_+ \backslash \Op$, with the free operad functor $\Gamma_+$.

\begin{example}The \emph{free}  $\C$-algebra $CX$ in \cite{May72}, construction 2.4, and lemma 2.9, or, more generally, the \emph{free reduced} $P_+$-algebra $\mathbb{S}_*(P_+, X)$ for a unitary operad $P_+$ in \cite{Fre17a}, p. 74, are also examples of free objects when you consider them in categories of unitary algebras, but losing its \lq\lq freedom" in the categories of all algebras, unitary or otherwise.
\end{example}

This difference between the same object being free in a subcategory and not being free in a larger category has as a consequence that minimal objects in a subcategory can lose its minimality in the broader one. This paradoxical phenomenon, it is not so new and has already been observed (see, for instance, \cite{Roi94c}, remark 4.8). Here we present another example of it, but in the category of dg commutative algebras.

\begin{example}\label{nounitexample}  Let $\Cdga{\mathbb{Q}}$ denote the category of dg commutative algebras, \emph{without unit}. Let $\Cdga{\mathbb{Q}}_1$ denote the category of algebras \emph{with unit}. By forgetting the unit, we can consider $\Cdga{\mathbb{Q}}_1$ as a subcategory of $\Cdga{\mathbb{Q}}$.

$\mathbb{Q}$, being the initial object in $\Cdga{\mathbb{Q}}_1$, is free, cofibrant, and minimal  in $\Cdga{\mathbb{Q}}_1$. Indeed, if we denote by $\Lambda_1$ the free graded commutative algebra \emph{with unit} functor, then $\Lambda_1(0) = \mathbb{Q}$: the free graded commutative algebra \emph{with} unit on the $\mathbb{Q}$-vector space $0$. However, it is \emph{neither} minimal nor cofibrant, nor free as an object in the larger category $\Cdga{\mathbb{Q}}$. To see this, let us denote by $\Lambda$ the free graded-commutative algebra \emph{without} unit. As an algebra \emph{without} unit, $\mathbb{Q}$ has a new relation. Namely, $1^2 = 1$. So, it is \emph{not} a free algebra in $\Cdga{\mathbb{Q}}$: $\mathbb{Q} = \Lambda_1 (0) = \Lambda (1)/(1^2 - 1)$.

Next, consider the free graded-commutative algebra \emph{without} unit  $\Lambda (t,x)$ on two generators $t,x$ in degrees $|t|=-1$ and $|x|=0$ and differential $dx = 0$ and $dt = x^2 - x$. Hence, as a graded vector space,

$$
\Lambda (t,x)^{i} =
\begin{cases}
  (x), & \mbox{if } i = 0, \\
  [t, tx, tx^2, \dots , tx^n, \dots], & \mbox{if } i = -1, \\
  0 , & \mbox{otherwise}.
\end{cases}
$$

where:

\begin{itemize}
  \item[(1)] $(x)$ is the ideal generated by $x$ in the polynomial algebra $\mathbb{Q}[x]$. That is, the $\mathbb{Q}$-vector space $[x, x^2, \dots , x^n, \dots]$
  \item[(2)] $[t, tx, tx^2, \dots , tx^n, \dots]$ means the $\mathbb{Q}$-vector space generated by those vectors.
\end{itemize}

Consider the morphism of algebras \emph{without} unit $\varphi : \Lambda (t,x) \longrightarrow \mathbb{Q}$ defined by $\varphi(x) = 1, \varphi (t) = 0$. $\varphi$ is a \emph{quis} and an epimorphism. So, if $\mathbb{Q}$ were a minimal and  cofibrant algebra \emph{without unit}, we would have a section $\sigma : \mathbb{Q} \longrightarrow \Lambda (t,x)$, $\varphi \sigma = \id $. For degree reasons, we would then have $\sigma (1) = p(x)$, for some polynomial $p(x) \in (x)$. That is to say, a polynomial of degree $\geq 1$. But, since $\sigma(1)\sigma(1) = \sigma (1^2) = \sigma (1) $,  we would get $p(x)^2 = p(x) $, which is impossible for a polynomial of degree $\geq 1$.

Hence, $\mathbb{Q}$ is graded-free, cofibrant and minimal \emph{as algebra with unit}. But it is neither of those things \emph{as an algebra without unit}.
\end{example}

\subsection{Minimal models of operad algebras for tame operads} In \cite{CR19} we proved the existence and uniqueness of Sullivan minimal models for operad algebras, for a wide class of operads we called \lq\lq tame", and for operad algebras satisfying  just the usual connectivity hypotheses.

Of particular importance was the fact that, if an operad $P$ is tame, then its minimal model $P_\infty$ is also tame: that is, $P_\infty$-algebras also have Sullivan minimal models \cite{CR19}, proposition 4.10. This provides minimal models for $\Ass_\infty, \Com_\infty$ and $\Lie_\infty$-algebras, for instance. Since at that time we were not aware of the possibility of building minimal models for unitary operads, there was a gap in our statements, meaning we had to formulate them only for non-unitary operads (there called \lq\lq reduced"). Now we can mend that gap.

\begin{proposition} Let $P\in \Op$ be a cohomologically connected, cohomologically unitary and with unitary multiplication $r$-tame operad. Then its minimal model is also $r$-tame.
\end{proposition}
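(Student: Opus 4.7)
The plan is to split the statement into its two cases — cohomologically non-unitary and cohomologically unitary — and reduce the unitary case to the non-unitary one via Proposition \ref{twominimalmodels}. In the non-unitary case there is nothing genuinely new to prove: by Theorem \ref{existencia}, the minimal model $P_\infty$ is a Sullivan minimal operad in $\Op_{01}$ (it is non-unitary, connected, and obtained as a transfinite composition of principal extensions starting from $I_0$), and this is exactly the setting of Proposition 4.10 of \cite{CR19}. Invoking that result gives $r$-tameness of $P_\infty$ directly.

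For the unitary case, I would let $P_+$ be cohomologically connected, cohomologically unitary, and $r$-tame, and denote by $P = \tau P_+$ its non-unitary truncation. The key observation is that $r$-tameness in the sense of \cite{CR19} is a condition on the positive arity components: it controls the growth of the generating $\Sigma$-modules and the relations in arities $\geq 1$, and is therefore insensitive to the arity-zero slot. Consequently $P$ inherits $r$-tameness from $P_+$, and the non-unitary case already treated yields that the Sullivan minimal model $P_\infty$ of $P$ is $r$-tame. Proposition \ref{twominimalmodels} then identifies the minimal model of $P_+$ with $(P_\infty)_+$, and the same arity-zero-insensitivity observation, applied in the opposite direction, gives that $(P_\infty)_+ = (P_+)_\infty$ is $r$-tame as required.

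The main obstacle I foresee is purely bookkeeping: one must pin down the precise definition of $r$-tameness from \cite{CR19} and check that it is stable under the truncation/unitary extension adjunction. Since the unitary extension functor $(\,)_+$ is exact, commutes with cohomology and colimits by Lemma \ref{extensionfunctor}, and only adjoins a copy of $\mk$ in arity zero, this verification reduces to inspecting that every ingredient in the definition of $r$-tameness (dimension bounds, connectivity hypotheses, and whatever quadratic or homotopical data are involved) depends only on the components of positive arity. If any ingredient formally mentions arity zero, one reformulates it via the $\Lambda$-module structure, whose non-trivial part also lives in positive arities; no substantive new argument is needed once this compatibility is in place.
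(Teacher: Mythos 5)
Your proposal is correct and follows essentially the same route as the paper, whose entire proof is the single observation that a non-trivial arity-zero component adds nothing to the condition of being tame; your "arity-zero-insensitivity" point is exactly that observation, and the extra scaffolding (reducing the unitary case via $(P_+)_\infty = (P_\infty)_+$ from Proposition \ref{twominimalmodels} and invoking \cite{CR19}, Proposition 4.10, for the non-unitary case) only makes explicit what the paper leaves implicit in the surrounding discussion.
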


\begin{proof} Indeed, the presence of a non-trivial arity zero $P(0)$ adds nothing to the condition of being tame or not.
\end{proof}

\begin{corollary} Every cohomologically connected $\Ass_{+\infty}$ or $\Com_{+\infty}$-algebra has a Sullivan minimal model. Also, every $1$-connected $\Ger_{+\infty}$-algebra has a Sullivan minimal model.
\end{corollary}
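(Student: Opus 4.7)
The plan is to assemble three inputs: (i) the existence theorem for Sullivan minimal models of algebras over tame operads from \cite{CR19}; (ii) the preceding proposition, which transports $r$-tameness from $P$ to $P_\infty$; and (iii) the observation that the unitary operads $\Ass_+$, $\Com_+$, $\Ger_+$ are themselves $r$-tame for the appropriate value of $r$.

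First I would recall from \cite{CR19} that the non-unitary operads $\Ass$, $\Com$, $\Ger$ are $r$-tame with $r = 0$, $0$, and $1$ respectively, and that cohomological connectivity (resp.\ $1$-connectivity, for the Gerstenhaber case) is precisely the hypothesis needed on a $P$-algebra $A$ for the main existence theorem of \emph{loc.\ cit.}\ to apply. Since, as remarked in the proof of the preceding proposition, the $r$-tameness condition depends only on arities $\geq 1$ of the operad and is thus insensitive to the arity-zero component, the unitary extensions $\Ass_+$, $\Com_+$, $\Ger_+$ are $r$-tame with the same values of $r$ as their non-unitary truncations.

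Next I would invoke Theorem~\ref{existencia+} to produce the Sullivan minimal models $\Ass_{+\infty}$, $\Com_{+\infty}$, $\Ger_{+\infty}$, and then apply the preceding proposition to conclude that each of these minimal models is itself $r$-tame with the same $r$ as the original operad. Finally, the existence of a Sullivan minimal model for any cohomologically connected $\Ass_{+\infty}$- or $\Com_{+\infty}$-algebra, and for any $1$-connected $\Ger_{+\infty}$-algebra, follows by applying the main existence theorem of \cite{CR19} to the tame operad $P = \Ass_{+\infty}$, $\Com_{+\infty}$, or $\Ger_{+\infty}$.

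The only real obstacle is bookkeeping: one must verify that the definition of $r$-tameness used in \cite{CR19}, and the connectivity hypothesis imposed there on the algebra $A$, are unaffected by the presence of a non-trivial arity-zero part in the governing operad. This is essentially the content of the preceding proposition and of the introductory remark of the present subsection explaining why the gap in \cite{CR19} now closes once minimal models for unitary operads are available.
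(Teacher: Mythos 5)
Your proposal is correct and follows exactly the route the paper intends: the unitary operads $\Ass_+$, $\Com_+$, $\Ger_+$ are $r$-tame because tameness is insensitive to the arity-zero part, Theorem \ref{existencia+} gives their Sullivan minimal models, the preceding proposition transports $r$-tameness to $\Ass_{+\infty}$, $\Com_{+\infty}$, $\Ger_{+\infty}$, and the existence theorem of \cite{CR19} for algebras over tame operads then yields the corollary under the stated connectivity hypotheses. The paper leaves this chain implicit (the corollary is stated without proof right after the proposition), so your write-up matches its argument, just spelled out.
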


Then we went on to prove the same results for pairs $(P,A)$, where $P$ is a tame operad and $A$ a $P$-algebra, thus providing a global invariance for our minimal models in the form of a minimal model $(P_\infty, \M) \stackrel{\sim}{\longrightarrow} (P,A)$ in the category of such pairs, the category of operad algebras \emph{over variable operads}. We can add now unitary operads to that result too.

\begin{theorem} Let $P$ be a cohomologically connected, cohomologically unitary, and with unitary multiplication $r$-tame operad and $A$ an $r$-connected $P$-algebra. Then $(P,A)$ has a Sullivan $r$-minimal model $(P_\infty, \M) \stackrel{\sim}{\longrightarrow} (P,A)$.
\end{theorem}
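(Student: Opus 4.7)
The plan is to reduce the statement to the corresponding theorem for pairs $(P,A)$ in the non-unitary case proved in \cite{CR19}, using the existence and tameness-preservation results established in this paper for unitary operads. Concretely, I would proceed in three stages.

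First, I would construct the operad part of the minimal model. By Theorem \ref{existencia} in the non-unitary case, or Theorem \ref{existencia+} in the unitary case, the operad $P$ admits a Sullivan minimal model $\rho : P_\infty \stackrel{\sim}{\longrightarrow} P$, which is connected and (non-unitary resp.\ unitary) accordingly. The previous proposition in this section ensures that $P_\infty$ is again $r$-tame, since tameness is a condition on arities $\geq 1$ and, as noted there, the presence of a non-trivial arity zero does not affect it. In the unitary case one further uses Proposition \ref{twominimalmodels} to identify $(P_+)_\infty = (P_\infty)_+$, so that $P_\infty$ is unitary exactly when $P$ is.

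Second, I would pull back the $P$-algebra structure on $A$ along $\rho$ to obtain a $P_\infty$-algebra $A_\rho$ with the same underlying cochain complex, in particular still $r$-connected. The pair $(P_\infty, A_\rho)$ now sits over $(P,A)$ via the morphism $(\rho, \id_A)$ in the category of operad algebras over variable operads, and this is a quasi-isomorphism of pairs by construction. Applying the minimal model theorem for algebras over a fixed tame operad from \cite{CR19} (whose hypotheses are fulfilled because $P_\infty$ is $r$-tame and $A_\rho$ is $r$-connected), one obtains an $r$-minimal Sullivan $P_\infty$-algebra $\M$ with a quis $\M \stackrel{\sim}{\longrightarrow} A_\rho$. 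Composing gives the desired quis $(P_\infty, \M) \stackrel{\sim}{\longrightarrow} (P,A)$.

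Finally, I would verify that $(P_\infty, \M)$ is indeed a Sullivan $r$-minimal model of the pair in the sense of \cite{CR19}. The operad part is $r$-minimal by Theorems \ref{existencia}--\ref{existencia+}, the algebra part is $r$-minimal over $P_\infty$ by construction, and these two minimalities are precisely what defines $r$-minimality for the pair in \emph{loc.~cit.} Uniqueness up to isomorphism follows formally from the lifting/uniqueness machinery already set up: for the operad component one invokes Theorem \ref{unicitat}, and for the algebra component the corresponding uniqueness result of \cite{CR19}, which applies to any $r$-tame operad, unitary or not. The only step where one needs to be careful is to check that the constructions of \cite{CR19} for algebras, phrased originally for non-unitary operads, go through verbatim for unitary ones; this is the main obstacle, but it is essentially formal since the $\Lambda$-structure on $P_\infty$ acts trivially on operations of arity $>2$ by Remark \ref{unitsremainforever}, so the Sullivan inductive construction of $\M$ involves only compositions that already make sense in the non-unitary setting and needs no further adjustment.
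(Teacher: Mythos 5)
Your proposal is correct and follows exactly the route the paper intends: the theorem is stated there without a separate proof, as an immediate consequence of the operad-level results (Theorems \ref{existencia}, \ref{existencia+}, Proposition \ref{twominimalmodels}), the preceding proposition that tameness is unaffected by arity zero, and the algebra-level machinery of \cite{CR19} applied after restricting the $P$-algebra structure along $P_\infty \stackrel{\sim}{\longrightarrow} P$. Your extra remark that the $\Lambda$-structure of $P_\infty$ is essentially trivial above arity $2$ (Remark \ref{unitsremainforever}) is a reasonable gloss, at the same level of detail as the paper's own justification, for why the constructions of \cite{CR19} carry over to the unitary case.
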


\subsection{Formality} It has been pointed out by Willwacher in his speech at the 2018 Rio's International Congress of Mathematicians, \cite{Wil18}, talking about the history of the formality of the little disks operads, that our paper \cite{GNPR05} missed the arity zero. Here we complete the results of that paper for the unitary case.

\begin{proposition}\label{formality+} Let $P_+$ be a unitary dg operad with $HP_+(0) = HP_+(1) = \mk$. Then $P_+$ is formal if and only if $P$ is formal.
\end{proposition}

\begin{proof} Since the truncation functor is exact, implication $\Longrightarrow$ is clear. In the opposite direction, because of the hypotheses, $P$ and $P_+$ have minimal models $P_\infty$ and $(P_\infty)_+ = (P_+)_\infty$. Assume $P$ is formal. Then we have a couple of \emph{quis} $HP \stackrel{\sim}{\longleftarrow} P_\infty \stackrel{\sim}{\longrightarrow} P$. Applying the unitary extension functor to this diagram, and taking into account that it is an exact functor because of \ref{extensionfunctor}, we get $(HP)_+ \stackrel{\sim}{\longleftarrow} (P_\infty)_+ \stackrel{\sim}{\longrightarrow} P_+$. Which, it is just $H(P_+) \stackrel{\sim}{\longleftarrow} (P_+)_\infty \stackrel{\sim}{\longrightarrow} P_+ $. Hence, $P_+$ is also a formal operad.
\end{proof}

\begin{corollary} (cf. \cite{Kon99}, \cite{Tam03}, \cite{GNPR05}, \cite{LaVo14}, \cite{FW18}) The unitary $n$-little disks operad $\D_{n+}$ is formal over $\mathbb{Q}$.
\end{corollary}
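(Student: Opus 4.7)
The plan is to apply Proposition \ref{formality+} directly, once we verify that the little disks operad satisfies its hypotheses and once we invoke the already-established formality of the non-unitary truncation. So the proof is essentially a three-line chain of implications, and the work lies in checking the inputs rather than in producing new homotopical arguments.

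First, I would verify the cohomological hypotheses on $\D_{n+}$. The arity zero of the unitary little disks operad is a point (the empty configuration with the marked disk), so $H\D_{n+}(0) = H^*(\mathrm{pt};\mathbb{Q}) = \mathbb{Q}$. Likewise, $\D_{n+}(1)$ is homotopy equivalent to a point (the space of a single subdisk inside the ambient disk is contractible), whence $H\D_{n+}(1) = \mathbb{Q}$. This shows that $\D_{n+}$ is cohomologically connected and cohomologically unitary in the sense of Theorem \ref{existencia+}, and that the hypotheses of Proposition \ref{formality+} are satisfied. Here I am taking rational singular cochains as the dg operad model, so that $H\D_{n+}$ recovers the Poisson--Gerstenhaber cohomology operad.

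Second, I would invoke the known formality of the non-unitary little disks operad $\D_n$ over $\mathbb{Q}$. This is due to Kontsevich \cite{Kon99} and Tamarkin \cite{Tam03}, with alternative proofs in \cite{GNPR05}, \cite{LaVo14}, \cite{FW18}. Concretely, there is a zig-zag of quasi-isomorphisms of dg operads $H\D_n \stackrel{\sim}{\longleftarrow} \cdot \stackrel{\sim}{\longrightarrow} \D_n$ over $\mathbb{Q}$, so that the truncation $\D_n$ of $\D_{n+}$ is formal.

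Third, applying Proposition \ref{formality+} to $P_+ = \D_{n+}$, whose truncation $P = \D_n$ is formal and which satisfies $HP_+(0) = HP_+(1) = \mathbb{Q}$ by the first step, I conclude that $\D_{n+}$ is formal over $\mathbb{Q}$. The main conceptual point, and the only place where the work of the present paper actually enters, is the existence of the minimal model $(P_+)_\infty = (P_\infty)_+$ from Proposition \ref{twominimalmodels} and the exactness of the unitary extension functor $(\ )_+$ from Lemma \ref{extensionfunctor}, both of which are used inside Proposition \ref{formality+} to transport the formality zig-zag from the non-unitary to the unitary setting without breaking the $\Lambda$-structure (equivalently, the strict units). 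I expect no real obstacle: provided one is careful to state the non-unitary input at the level of dg operads over $\mathbb{Q}$ (rather than, say, only in topological spaces or only for the truncation), the application of Proposition \ref{formality+} is immediate.
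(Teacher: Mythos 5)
Your proposal is correct and follows essentially the same route as the paper: the paper's proof simply invokes the formality of the non-unitary little disks operad (citing \cite{GNPR05}, Corollary 6.3.3) and applies Proposition \ref{formality+}. Your additional verification that $H\D_{n+}(0)=H\D_{n+}(1)=\mathbb{Q}$ (via contractibility of the arity $0$ and $1$ spaces, after passing to rational (co)chains) is exactly the implicit hypothesis check the paper leaves to the reader.
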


\begin{proof} Follows from \cite{GNPR05}, corollary 6.3.3 and our previous proposition \ref{formality+}.
\end{proof}

We can also offer a unitary version of the main theorem 6.2.1 in \emph{op.cit.} about the independence of formality from the ground field.

\begin{corollary} (cf. \cite{Sul77}, \cite{HS79}, \cite{Roi94b}, \cite{GNPR05}) Let $\mk$ be a field of characteristic zero, and let $\mk \subset \mathbf{K}$ be a field extension. Let $P$ be a cohomologically connected, cohomologically unitary and with unitary multiplication $\mk$-operad with finite type cohomology. Then $P$ is formal if and only if $P\otimes \mathbf{K}$ is formal.
\end{corollary}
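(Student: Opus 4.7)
The plan is to reduce the statement to the non-unitary version of the theorem, which is exactly theorem 6.2.1 of \cite{GNPR05}, and then transfer back and forth between the unitary operad and its truncation using proposition \ref{formality+}. The key observation making this work is that the truncation functor $\tau$ and the unitary extension functor $(\ )_+$ both commute with the scalar extension $-\otimes_\mk \mathbf{K}$, since scalar extension only acts on the underlying dg vector spaces and leaves the $\Sigma$- and $\Lambda$-structures unchanged. In particular, $\tau(P\otimes_\mk \mathbf{K}) = (\tau P)\otimes_\mk \mathbf{K}$ as dg $\mathbf{K}$-operads.

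First I would verify that the hypotheses are preserved under the operations involved. The truncation $\tau P$ is a cohomologically connected non-unitary dg $\mk$-operad with finite type cohomology, since these conditions only depend on the arities $l\geq 1$, which are unchanged by truncation. Similarly $P\otimes_\mk\mathbf{K}$ is cohomologically connected and cohomologically unitary with finite type cohomology over $\mathbf{K}$, because cohomology commutes with flat scalar extension and $H(P\otimes_\mk\mathbf{K})(l) = HP(l)\otimes_\mk \mathbf{K}$. Thus proposition \ref{formality+} applies both to $P$ over $\mk$ and to $P\otimes_\mk \mathbf{K}$ over $\mathbf{K}$, while theorem 6.2.1 of \cite{GNPR05} applies to the non-unitary operad $\tau P$.

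The proof then runs as a chain of equivalences:
\begin{eqnarray*}
P \ \text{formal over}\ \mk &\Longleftrightarrow& \tau P \ \text{formal over}\ \mk \qquad \text{(prop.\ \ref{formality+})} \\
&\Longleftrightarrow& (\tau P)\otimes_\mk\mathbf{K} \ \text{formal over}\ \mathbf{K} \qquad \text{(theorem 6.2.1 of \cite{GNPR05})} \\
&\Longleftrightarrow& \tau(P\otimes_\mk \mathbf{K}) \ \text{formal over}\ \mathbf{K} \qquad \text{(since}\ \tau\ \text{commutes with}\ -\otimes_\mk\mathbf{K}\text{)} \\
&\Longleftrightarrow& P\otimes_\mk\mathbf{K} \ \text{formal over}\ \mathbf{K} \qquad \text{(prop.\ \ref{formality+})}.
\end{eqnarray*}

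There is no real obstacle here once proposition \ref{formality+} and the non-unitary scalar-extension result are in hand; the only point deserving a line of justification is the commutation $\tau(P\otimes_\mk\mathbf{K}) = (\tau P)\otimes_\mk\mathbf{K}$, which is immediate from the definition of the truncation functor together with $\mk\otimes_\mk\mathbf{K}=\mathbf{K}$. The forward direction $(1)\Rightarrow(2)$ of the corollary could of course also be argued directly by tensoring a zig-zag of quasi-isomorphisms $P \stackrel{\sim}{\longleftarrow} (P)_\infty \stackrel{\sim}{\longrightarrow} HP$ with $\mathbf{K}$ and using $H(P\otimes_\mk\mathbf{K}) = HP\otimes_\mk\mathbf{K}$, but the four-step equivalence above treats both implications uniformly.
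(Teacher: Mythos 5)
Your chain of equivalences is essentially the paper's argument (Proposition \ref{formality+} applied twice around theorem 6.2.1 of \cite{GNPR05}, using that truncation/unitary extension commute with $-\otimes_\mk\mathbf{K}$), but there is a gap in your hypothesis check. Proposition \ref{formality+} is stated for a \emph{unitary} operad, i.e.\ one with $P_+(0)=\mk$ on the nose; the corollary only assumes $P$ is \emph{cohomologically} unitary, $HP(0)=\mk$. Your first and last equivalences apply \ref{formality+} directly to $P$ and to $P\otimes_\mk\mathbf{K}$, which need not satisfy that hypothesis. The easy direction (formality of the operad implies formality of its truncation) only uses exactness of $\tau$ and survives, but the direction you actually need in both implications — formality of the truncation implies formality of the operad — is proved in \ref{formality+} by applying the unitary extension functor to a zig-zag for the truncation, and this recovers $P$ only when $P=(\tau P)_+$, i.e.\ when $P(0)=\mk$ exactly. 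If $P(0)$ is merely a complex quasi-isomorphic to $\mk$, the zig-zag you obtain relates $(\tau P)_+$ to $H P_+$, not $P$ to $HP$, and extending a formality zig-zag of $\tau P$ over the arity-zero part of $P$ compatibly with all compositions is not automatic.

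The missing idea is the paper's opening move: since formality only depends on the quasi-isomorphism type, first replace $P$ by its Sullivan minimal model (Theorem \ref{existencia+}), which is genuinely connected and unitary, $P_\infty(1)=P_\infty(0)=\mk$; call it $P_+$. After this reduction both $P_+$ and $P_+\otimes_\mk\mathbf{K}$ are strictly unitary (and still cohomologically connected, of finite type), so Proposition \ref{formality+} applies legitimately at both ends of your chain, and the rest of your argument — including the commutation $\tau(P_+\otimes_\mk\mathbf{K})=(\tau P_+)\otimes_\mk\mathbf{K}$ and the appeal to theorem 6.2.1 of \cite{GNPR05} for the non-unitary truncation — goes through exactly as you wrote it.
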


\begin{proof} Because the statements only depend on the homotopy type of the operad, we can assume $P$ to be minimal, and hence connected and unitary: let us call it $P_+$. Then, $P_+$ is formal if and only if its truncation $P$ is so because of the previous proposition \ref{formality+}. Because of op.cit. theorem 6.2.1, $P$ is formal if and only if $P\otimes \mathbf{K}$ is so. Because of previous proposition \ref{formality+}, this is true if and only if $(P\otimes \mathbf{K})_+ = P_+ \otimes \mathbf{K}$ is formal.
\end{proof}

The interested reader can easily check that the rest of the sections of \cite{GNPR05} concerning non-unitary operads admit similar extensions to unitary ones. This is true, even for the finite type results like theorem 4.6.3 in \emph{op.cit} from which the descent of formality hinges:

\begin{theorem} Let $P$ be a cohomologically connected, cohomologically unitary and with unitary multiplication operad. If the cohomology of $P$ is of finite type, then its minimal model $P_\infty$ is of finite type.
\end{theorem}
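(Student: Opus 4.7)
The plan is to reduce the unitary case to the non-unitary one, which is already established as Theorem 4.6.3 of \cite{GNPR05}. The crucial tool is Proposition \ref{twominimalmodels}, which says that the minimal model of a cohomologically unitary operad is simply the unitary extension of the minimal model of its non-unitary truncation. Since the unitary extension only adjoins a single copy of $\mk$ in arity $0$, it preserves finite-typeness in a trivial way.

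Concretely, let $P_+$ be cohomologically connected and cohomologically unitary with $HP_+$ of finite type, and let $P = \tau P_+$ be its non-unitary truncation. First I would observe that $HP$ remains of finite type and of the right shape: $HP(0) = 0$, $HP(1) = HP_+(1) = \mk$, and $HP(l) = HP_+(l)$ for $l \geq 2$. So $P$ satisfies the hypotheses of the non-unitary version of the theorem. Applying Theorem 4.6.3 of \cite{GNPR05} yields that $P_\infty$ is of finite type. Then I would invoke Proposition \ref{twominimalmodels} to identify $(P_+)_\infty = (P_\infty)_+$, and conclude by noting that the unitary extension functor $(\ )_+$ preserves finite type (it only adds $\mk$ in arity $0$).

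If one wanted to give a self-contained proof rather than citing \cite{GNPR05}, the argument would proceed by induction on the arity $n$ of the principal extensions defining $P_\infty$, exactly as in the existence proof of Theorems \ref{existencia} and \ref{existencia+}. The inductive claim is that $P_{n-1}$ is of finite type; the step requires $E(n) = H(P_{n-1}(n), P(n))$ to be of finite type. This follows from the long exact sequence of the relative cohomology together with the following two ingredients: $HP(n)$ is of finite type by hypothesis, and $HP_{n-1}(n)$ is of finite type because Lemma \ref{lamadredelcordero} and the tree description give
\[
P_{n-1}(n) = \bigoplus_{T\in\widetilde{\tree(n)}} M(T),
\]
where $M = \bigoplus_{k=2}^{n-1} E(k)$; the sum is finite because a reduced tree with $n$ inputs and all $r_v \geq 2$ has at most $n-1$ vertices, so $P_{n-1}(n)$ is a finite direct sum of tensor products of finite-type $\Sigma$-modules.

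The main obstacle one might worry about is degree-wise finiteness when tensoring the previously constructed $E(k)$, since a priori the generators could accumulate in a single cohomological degree. However, this is controlled by the standard Sullivan estimates used in \cite{GNPR05} (cohomological connectivity forces the generators to occupy strictly increasing ranges of bidegrees). Since we are free to cite that paper for the non-unitary case, the delicate degree bookkeeping is entirely avoided, and the whole argument for the unitary case collapses to the two lines: apply \cite{GNPR05} to $\tau P_+$, then apply $(\ )_+$ using Proposition \ref{twominimalmodels}.
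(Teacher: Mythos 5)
Your argument is correct, and it reaches the paper's conclusion by a slightly different mechanism. The paper disposes of the theorem in one line: the finite-type argument of Theorem 4.6.3 of \cite{GNPR05} carries over verbatim to the unitary case because there too $P_\infty = \Gamma(E)$ with $E(0)=E(1)=0$ (Lemma \ref{lamadredelcordero} guaranteeing that $\Gamma_{+1}(E)$ agrees with the ordinary free operad outside arities $0,1$, where it is just $\mk$). You instead reduce formally: truncate, apply the non-unitary theorem of \cite{GNPR05} to $\tau P_+$ (which is legitimate, since $\tau P_+$ has arity zero strictly equal to $0$, is cohomologically connected, and has finite-type cohomology), and transport the conclusion through the identification $(P_+)_\infty = (P_\infty)_+$ of Proposition \ref{twominimalmodels}, noting that $(\ )_+$ only adjoins $\mk[1]$ in arity $0$. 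What this buys is modularity: you never have to re-inspect the degree bookkeeping of \cite{GNPR05} inside the unitary construction, and your supplementary inductive sketch correctly identifies degreewise finiteness of the tensor products over reduced trees as the only delicate point, deferring it to the cited estimates exactly as the paper does. The one case your reduction does not literally cover is the cohomologically non-unitary one with $P(0)\neq 0$ but $HP(0)=0$, where the statement slightly exceeds the hypotheses of the earlier work ($P(0)=0$ there); for that case the relevant observation is the paper's: the construction of Theorem \ref{existencia} still produces $P_\infty=\Gamma_{01}(E)$ with $E(0)=E(1)=0$ by the same arity-wise induction, so the same finite-type argument applies unchanged, and nothing in your approach conflicts with it.
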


And this is so because, even in the unitary case, $P_\infty = \Gamma (E)$, with $E(0) = E(1) =0$.

In particular, we have the celebrated Sullivan's criterium of formality based on the lifting of a \emph{grading automorphism} also for unitary operads.

\begin{definition} Let $\alpha \in \mk^*$ to not be a root of unity and $C$  a complex of $\mk$-vector spaces. The \emph{grading automorphism} $\phi_\alpha$ of $HC$ is defined by $\phi_\alpha = \alpha^i \id_{HC^i}$ for all $i\in \mathbb{Z}$. A morphism of complexes $f$ of $C$ is said to be a \emph{lifting} of the grading automorphism if $Hf = \phi_\alpha$.
\end{definition}

\begin{proposition}(cf. \cite{Sul77}, \cite{GNPR05}) Let $P$ be a cohomologically connected, cohomologically unitary and with unitary multiplication operad with finite type cohomology. If for some non-root of unity $\alpha \in \mk^*$, $P$ has a lifting of $\phi_\alpha$, then $P$ is formal.
\end{proposition}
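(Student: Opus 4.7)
The plan is to adapt Sullivan's original argument, as already carried out for non-unitary operads in \cite{GNPR05}, treating both the non-unitary and the unitary case uniformly thanks to the minimal models of Theorems \ref{existencia} and \ref{existencia+}. By the previous theorem, the minimal model $\rho : P_\infty \longrightarrow P$ is of finite type, and one can write $P_\infty = \Gamma(E)$ with $E = \bigoplus_{n\geq 2} E(n)$ finite-dimensional in each arity-degree, where $\Gamma$ stands for $\Gamma_{01}$ or $\Gamma_{+1}$ according to the case.

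First I would lift the grading automorphism to $P_\infty$. Given a \emph{quis} $f : P \longrightarrow P$ with $Hf = \phi_\alpha$, Proposition \ref{bijecciohomotopies} yields a morphism $\psi : P_\infty \longrightarrow P_\infty$ with $\rho \psi \simeq f \rho$; then $H\psi = (H\rho)^{-1} \phi_\alpha H\rho$ is an isomorphism, so $\psi$ is a \emph{quis} between minimal Sullivan operads, hence an automorphism by Proposition \ref{quisimplicaiso}.

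Next I would show, by induction on the arity $n$, that the generating $\Sigma$-modules $E(n)$ can be chosen so that $\psi$ acts on $E(n)^i$ by multiplication by $\alpha^i$. At the inductive step from $P_{n-1}$ to $P_n = P_{n-1} \sqcup \Gamma(E(n))$, the automorphism $\psi$ induces a $\mk[\Sigma_n]$-linear, degree-preserving endomorphism on the quotient of $P_\infty(n)$ by the decomposable part, which by minimality and Lemma \ref{lamadredelcordero} can be identified with $E(n)$. Because $\psi$ realises $\phi_\alpha$ on cohomology and, by construction of the Sullivan tower, $E(n) = H(P_{n-1}(n), P(n))$ measures the new cohomology introduced at arity $n$, the eigenvalues of this induced map on the degree $i$ piece must all equal $\alpha^i$. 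A generalised-eigenspace decomposition, available since $\mk$ has characteristic zero and made $\Sigma_n$-equivariant by the averaging trick already used in Lemma \ref{liftingthroughextensions+}, then produces the required basis of $E(n)$. In the unitary case the decomposition is automatically compatible with the $\Lambda$-structure, because $\psi$ is itself a $\Lambda$-morphism and a $\psi$-invariant splitting is therefore preserved by the restriction operations.

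With such an eigenbasis the commutation $d\psi = \psi d$ forces the differential to vanish on $E$: for $e \in E(n)^i$, the element $d(e)$ lies in the decomposable part of $\Gamma(E)^{i+1}$, which is spanned by products of generators whose degrees sum to $i+1$ and on which $\psi$ acts by $\alpha^{i+1}$ (since $\psi$ is multiplicative and eigenvalues add in the exponent); hence $\alpha^i d(e) = d\psi(e) = \psi d(e) = \alpha^{i+1} d(e)$, and $\alpha$ not being a root of unity forces $d(e) = 0$. Therefore $P_\infty$ has trivial differential, so $P_\infty = HP_\infty = HP$, and $\rho$ together with this identification provides the formality \emph{quis} zig-zag. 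The principal obstacle is the inductive eigenvalue identification in the second step: one must argue carefully that the $\psi$-eigenvalues on $E(n)^i$ are not merely \emph{some} scalars compatible with $\phi_\alpha$ but are \emph{forced} to be $\alpha^i$ by the way $\psi$ lifts $\phi_\alpha$ through the Sullivan tower; everything else (equivariance, $\Lambda$-compatibility, the final eigenvalue calculation) slots smoothly into the framework developed in this paper.
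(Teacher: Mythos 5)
Your overall strategy (prove formality directly on the minimal model by exhibiting a grading automorphism there) is in the spirit of Sullivan, and your first step is sound: lifting $f$ to an automorphism $\psi$ of $P_\infty$ via Propositions \ref{bijecciohomotopies} and \ref{quisimplicaiso} works. But the inductive claim at the heart of your argument --- that the generators can be chosen so that $\psi$ acts on $E(n)^i$ by $\alpha^i$ --- is not merely the "principal obstacle" you flag; it is false, and the conclusion you derive from it (that the differential of $P_\infty$ vanishes) is visibly too strong: it would say that $HP$ is free as a graded operad whenever a lifting of $\phi_\alpha$ exists. Take $P = \Ass_+$ (or any formal operad whose cohomology is not free): its cohomology is concentrated in degree $0$, so $\phi_\alpha = \id$ and the identity is a lifting, yet $su\Ass_\infty$ has a nonzero differential, e.g.\ $\partial\mu_3 = \mu_2\circ_1\mu_2 - \mu_2\circ_2\mu_2$. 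Commutation of $\psi$ with the decomposable differential forces the weight of a generator to be a \emph{product} of weights of lower-arity generators ($\mu_3$, of degree $-1$, is forced to have weight $\alpha^0 = 1$, not $\alpha^{-1}$), and the relative cohomology $E(n) = H(P_{n-1}(n),P(n))$ mixes new classes of $HP(n)$ (which do carry weight $\alpha^i$ in degree $i$) with classes that kill the excess cohomology of the free operad $P_{n-1}(n)$, whose weights are such products. So the eigenvalues on $E(n)^i$ are genuinely not $\alpha^i$, and your final computation $\alpha^i\,d e = \alpha^{i+1}\,d e$ never gets off the ground.

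The classical argument, and the one the paper relies on, does not attempt to prove $d=0$. One uses the finite-type hypothesis (via the finite-type theorem for $P_\infty$ stated just before the proposition) to take the generalized eigenspace ("weight") decomposition of a lift; this decomposition is compatible with the differential, and weights multiply under composition products. Purity of cohomology --- $H^n$ carries only the eigenvalue $\alpha^n$, and these are pairwise distinct because $\alpha$ is not a root of unity --- then lets one build a zig-zag of quasi-isomorphisms $P \longleftarrow B \longrightarrow HP$, where $B$ is the suboperad which, in the weight-$\alpha^w$ piece, consists of all elements of degree $<w$, the cocycles in degree $w$, and nothing above; this is exactly the argument of \cite{GNPR05} (cf.\ \cite{Sul77}, \cite{Pet14}) for non-unitary operads. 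For the unitary case the paper's intended route is even shorter: a lifting of $\phi_\alpha$ on $P_+$ restricts to one on the truncation $P$, which is cohomologically connected, non-unitary and of finite type, so $P$ is formal by \cite{GNPR05}, and Proposition \ref{formality+} transfers formality back to $P_+$. Either repair the eigenvalue step along these lines (weight decomposition plus purity, rather than $d=0$), or replace your induction by this truncation argument; as written, the proof does not go through.
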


\appendix
\section{The Kan-like structure of an operad with unitary multiplication}

Let $P$ be an operad with a unit $1 \in P_+(0)$. Then we can define \emph{face maps}

$$
\delta_i : P(n) \longrightarrow P(n-1) \ , \quad i = 1, \dots , n \ , \ n \geq 1 \ , \qquad
\delta_i (\omega ) = \omega \circ_i 1 \ .
$$

Equivalently, start with a $\Lambda$-operad $P$, with its restriction operations $\delta_i : P(n) \longrightarrow P(n-1)$ subjected to verify the axioms of \cite{Fre17a}, p. 70-71.

Let $P$ be an operad \emph{with unitary multiplication}. That is, we have a morphism of operads $\varphi : \Mag_+ \longrightarrow P$. Let us call $m = \varphi (\mu)$ the image of the arity two generator $\mu \in \Mag_+(2)$ and $1 \in P(0)$ the image of $1 \in \Ass_+(0)$. This means we have an associative multiplication with unit $m \in P(2)$, which is a cocycle. That is, $m \circ_1 1 = \id = m \circ_2 1$, and $\partial m = 0 = \partial 1$. Equivalently, the first equations can be written in terms of the $\Lambda$-structure as $\delta_1 m = \id = \delta_2 m $.

This multiplication allows us to define \emph{degeneracy maps}

$$
s_i : P(n) \longrightarrow P(n+1) \ , \quad i = 1, \dots , n \ , \ n \geq 0 \ ,
\qquad
s_i(\omega ) = \omega \circ_i m \ .
$$

\begin{remark}\label{differentials} Notice that both \emph{face} and \emph{degeneracy} maps commute with the differentials of the operad.
\end{remark}

\begin{lemma}\label{simplicial} Let $P$ be an operad with unitary multiplication. Then the face and degeneracy maps verify:
\begin{enumerate}
  \item[(i)] $\delta_i\delta_j = \delta_{j-1}\delta_i$, if $i<j$,
  \item[(ii)] (a) $\delta_is_j = s_{j-1}\delta_i$, if $i<j$, and (b) $\delta_js_j = \id = \delta_{j+1}s_j$
\end{enumerate}
\end{lemma}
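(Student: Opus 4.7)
The plan is to prove each of the five identities by direct computation from the definitions $\delta_i(\omega) = \omega \circ_i 1$ and $s_i(\omega) = \omega \circ_i m_2$, using the two associativity axioms for the partial composition products of an operad together with the properties $(a), (b)$ of $m_2$. Recall these axioms in the convenient form of Fresse (\cite{Fre17a}): for $\omega \in P(n)$, $\eta \in P(a)$, $\xi \in P(b)$,
\begin{itemize}
\item[(P)] \emph{parallel axiom}: $(\omega \circ_i \eta) \circ_{j+a-1} \xi = (\omega \circ_j \xi) \circ_i \eta$, whenever $1 \leq i < j \leq n$;
\item[(S)] \emph{sequential axiom}: $(\omega \circ_i \eta) \circ_{i+k-1} \xi = \omega \circ_i (\eta \circ_k \xi)$, whenever $1 \leq i \leq n$ and $1 \leq k \leq a$.
\end{itemize}

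For identity (i), take $\eta = \xi = 1 \in P(0)$, so $a = 0$, in (P) with $i < j$: one gets $(\omega \circ_i 1) \circ_{j-1} 1 = (\omega \circ_j 1) \circ_i 1$, which is exactly $\delta_{j-1} \delta_i = \delta_i \delta_j$. Identity (iii)(a) is the same calculation with $\eta = 1$, $\xi = m_2$, and (iii)(c) is the symmetric one with $\eta = m_2$, $\xi = 1$, where the hypothesis $i > j+1$ lets us apply (P) at the pair $(j, i-1)$ of strictly ordered indices, shifted by the arity $2$ of $m_2$.

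For identity (ii) with $i < j$, apply (P) with $\eta = \xi = m_2$: $(\omega \circ_i m_2) \circ_{j+1} m_2 = (\omega \circ_j m_2) \circ_i m_2$, i.e.\ $s_{j+1} s_i = s_i s_j$. The case $i = j$ requires (S) instead: $(\omega \circ_i m_2) \circ_i m_2 = \omega \circ_i (m_2 \circ_1 m_2)$ and $(\omega \circ_i m_2) \circ_{i+1} m_2 = \omega \circ_i (m_2 \circ_2 m_2)$; these agree thanks to the associativity $m_2 \circ_1 m_2 = m_2 \circ_2 m_2$ of $(a)$. Finally, for (iii)(b), apply (S) with $\eta = m_2$, $\xi = 1$: $(\omega \circ_j m_2) \circ_j 1 = \omega \circ_j (m_2 \circ_1 1)$ and $(\omega \circ_j m_2) \circ_{j+1} 1 = \omega \circ_j (m_2 \circ_2 1)$, both of which collapse to $\omega \circ_j \id = \omega$ by the unit condition $(b)$.

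There is essentially no obstacle: every identity reduces to one application of (P) or (S), and the only non-formal input is condition $(a)$ (associativity of $m_2$) in case $i=j$ of (ii) and condition $(b)$ (unitality of $m_2$) in (iii)(b). The only care needed is the bookkeeping of index shifts, which is handled uniformly by remembering that inserting an operation of arity $a$ at position $i$ shifts the labels of positions $>i$ by $a-1$.
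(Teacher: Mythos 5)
Your proof is correct and follows essentially the same route as the paper: the paper simply cites the functoriality and associativity relations for partial composition products in Fresse's book together with conditions $(a)$ and $(b')$ on $m_2$, which is exactly the content of your explicit computations with the parallel and sequential axioms. Your write-up is in fact more complete, since you also verify (iii)(a) and (iii)(c), which the paper's proof leaves implicit, and your index bookkeeping (including the arity-zero case $a=0$ in the parallel axiom) checks out.
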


\begin{proof} Assertion (i) follows from the functorial relation $(\delta^j\delta^i)^* = \delta_i\delta_j$, together with the explicit formulas for the increasing maps $\delta^i : \underline{n-1} \longrightarrow \underline{n}$ in \cite{Fre17a}, p. 58-59. Assertion (iia) follows from inspection. Assertion (iib) follows from the fact that $m$ is a unitary multiplication.
\end{proof}

So, up to a shift of the arity index, every unitary operad with multiplication has the structure of an augmented simplicial-like complex. We also have a Kan-like condition and Kan-like property in unitary operads.

\begin{definition}\label{Kan-likecondition} Let $\left\{\omega_i \right\}_{i=1, \dots , n}$ be a family of elements in $P(n-1)$. We say that these elements verify a \emph{Kan-like condition} if $
\delta_i \omega_j = \delta_{j-1}\omega_i$, for all $ i < j $.
\end{definition}

\begin{example} Elements $\omega \in P(n), n \geq 1 ,$ produce families $\left\{ \omega_i = \delta_i \omega\right\}_{i = 1, \dots , n}$ in $P(n-1)$ that verify the \emph{Kan-like condition}. This follows from the first identity in \ref{simplicial}.
\end{example}

The main result in this appendix says the reciprocal of this example is also true.

\begin{lemma}\label{Kan-likeresult} Let $\left\{ \omega_i  \right\}_{i = 1, \dots , n}$ be a family of elements in $P(n-1)$ verifying the Kan-like condition. Then there exists $\omega \in P(n)$ such that $\delta_i \omega = \omega_i$ for all $i = 1, \dots , n$.
\end{lemma}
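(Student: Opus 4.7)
The plan is to produce $\omega$ by an inductive correction procedure in which we enforce one face at a time. The essential ingredients are the degeneracies $s_i(\nu) = \nu \circ_i m_2 : P(n-1) \to P(n)$, available for $i = 1, \dots, n-1$, together with an \emph{extra top degeneracy} $h'(\nu) := m_2 \circ_1 \nu$, which is defined for every $\nu \in P(n-1)$ thanks to the unitary multiplication. The operad axioms, combined with $m_2 \circ_2 1 = \id$, give directly that $\delta_n h'(\nu) = \nu$ and $\delta_i h'(\nu) = h'(\delta_i \nu)$ for $i = 1, \dots, n-1$. Together with the simplicial identities of Lemma \ref{simplicial}, these formulas provide, for each $k \in \{1, \dots, n\}$, a section $\sigma_k : P(n-1) \to P(n)$ satisfying $\delta_k \sigma_k = \id$ and $\delta_i \sigma_k(\xi) = 0$ whenever $i < k$ and $\delta_i \xi = 0$. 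Explicitly, take $\sigma_k := s_k$ for $k < n$ and $\sigma_n := h'$.

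With these sections in hand, I build elements $\omega^{(k)} \in P(n)$ inductively for $k = 0, 1, \dots, n$, with the property that $\delta_i \omega^{(k)} = \omega_i$ for all $i \leq k$. Set $\omega^{(0)} := 0$ and, having constructed $\omega^{(k-1)}$, form the residue $\xi_k := \omega_k - \delta_k \omega^{(k-1)} \in P(n-1)$ and take $\omega^{(k)} := \omega^{(k-1)} + \sigma_k(\xi_k)$. The inductive hypothesis $\delta_i \omega^{(k-1)} = \omega_i$ for $i<k$, the simplicial identity $\delta_i \delta_k = \delta_{k-1} \delta_i$, and the Kan-like assumption $\delta_i \omega_k = \delta_{k-1} \omega_i$ combine to give $\delta_i \xi_k = \delta_i \omega_k - \delta_{k-1} \delta_i \omega^{(k-1)} = \delta_{k-1} \omega_i - \delta_{k-1} \omega_i = 0$ for all $i < k$. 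The defining properties of $\sigma_k$ then yield $\delta_k \omega^{(k)} = \omega_k$ while preserving $\delta_i \omega^{(k)} = \omega_i$ for $i < k$, completing the induction. The desired filler is $\omega := \omega^{(n)}$.

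The main obstacle lies in the final step $k = n$, where the naive choice $\sigma_n = s_n$ fails because $s_n$ is not defined on $P(n-1)$ (it would require arity at least $n$). This is precisely where the hypothesis that $P$ has \emph{unitary multiplication} becomes essential: the cocycle $m_2 \in P(2)$ with $\delta_1 m_2 = \delta_2 m_2 = \id$ enables the extra top degeneracy $h'$, which plays the role of the missing $s_n$ at the last step. For the base case $n = 1$ the whole construction collapses to $\omega := h'(\omega_1) = m_2 \circ_1 \omega_1 \in P(1)$, which satisfies $\delta_1 \omega = \omega_1$ directly by the unit axiom applied to $m_2$.
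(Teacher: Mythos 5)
Your proof is correct, and it follows the same May-style strategy as the paper: an arity-preserving induction in which one face condition is enforced at a time by adding a degenerate correction term, using the simplicial identities of Lemma \ref{simplicial} together with the Kan-like condition to check that earlier faces are not disturbed (your recursion $\omega^{(k)}=\omega^{(k-1)}+\sigma_k\bigl(\omega_k-\delta_k\omega^{(k-1)}\bigr)$ is literally the paper's $u_r=u_{r-1}+s_r\bigl(-\delta_r u_{r-1}+\omega_r\bigr)$). Where you genuinely differ is at the top step $k=n$, and your treatment there is in fact sharper than the paper's: the paper applies $s_n$ to elements of $P(n-1)$, but by its own definition $s_i(\nu)=\nu\circ_i m_2$ is only available on $P(n-1)$ for $i\le n-1$, so the final correction (and the case $n=1$) does not parse as written. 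Your extra top degeneracy $h'(\nu)=m_2\circ_1\nu$, with $\delta_n h'=\id$ (from $m_2\circ_2 1=\id$) and $\delta_i h'=h'\delta_i$ for $i<n$, supplies exactly the missing section of $\delta_n$ that kills the lower faces of an element with vanishing lower faces; note that $s_{n-1}$ could not serve here, since $\delta_{n-1}s_{n-1}=\id$ would spoil the face already arranged. This also makes transparent why the unitary multiplication is needed beyond the bare simplicial identities: for a general simplicial object in vector spaces a compatible family of \emph{all} faces need not admit a filler, and it is the extra degeneracy coming from $m_2$ that makes the statement true. One small addendum for the way the lemma is used later (Lemma \ref{enhancedKan-likeresult} and the equivariance arguments): your filler is built from $\delta_i$, $s_i$, $h'$ and sums, so you should record that the relevant submodules $B$ (cocycles, coboundaries, kernels and images of operad morphisms) are also stable under $h'=m_2\circ_1(-)$, which they are, since $m_2$ is a cocycle and morphisms of operads preserve partial compositions.
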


\begin{proof} We can easily adapt the proof in \cite{May92}, theorem 17.1. Precisely, we are going to inductively construct elements $u_1, \dots , u_n \in P(n)$ such that $\delta_i u_r = \omega_i$ for all $ i \leq r$ and $ r = 1 , \dots , n $. Then we will just take $\omega = u_n$. We start with $u_1 = s_1\omega_1$. We have $\delta_1u_1 = \delta_1s_1\omega_1 = \omega_1$, because of \ref{simplicial}, (iiib). Assume $u_1, \dots , u_{r-1}$ have already been constructed. Then, define $u_r$ as follows:

$$
  y_{r-1} = s_r(-\delta_ru_{r-1} + \omega_r)  \ ,
  \qquad
  u_r = u_{r-1} + y_{r-1} \ .
$$

Let us verify that $\delta_i y_{r-1} = 0$, for $i<r$:

\begin{eqnarray*}
  \delta_iy_{r-1} &=& \delta_is_r(-\delta_ru_{r-1} + \omega_r ) = s_{r-1}\delta_i ( -\delta_ru_{r-1} + \omega_r  ) \ , \quad \text{because \ref{simplicial}, (iiia)}, \\
   &=& -s_{r-1}\delta_i\delta_ru_{r-1} + s_{r-1}\delta_i\omega_r = -s_{r-1}\delta_{r-1}\delta_i u_{r-1} + s_{r-1}\delta_i\omega_r \ , \quad \text{because \ref{simplicial}, (i)},\\
   &=& -s_{r-1}\delta_{r-1}\omega_i + s_{r-1}\delta_i\omega_r \ , \quad \text{by the induction hypothesis}, \\
   &=& -s_{r-1}\delta_{r-1}\omega_i + s_{r-1}\delta_{r-1}\omega_i = 0  \quad \text{because of the Kan-like condition} \ .
\end{eqnarray*}

Hence, by the induction hypothesis, for all $i< r$, we have $\delta_iu_r = \delta_iu_{r-1}  + \delta_iy_{r-1} = \delta_iu_{r-1} = \omega_i$. Finally, because \ref{simplicial} $\delta_r u_r = \delta_r u_{r-1} + \delta_rs_r (-\delta_r u_{r-1} + \omega_r) = \delta_ru_{r-1} - \delta_ru_{r-1} + \omega_r  = \omega_r $
\end{proof}

\begin{remark} Notice the operations we perform in order to produce the element $\omega$ such that $\delta_i \omega = \omega_i$: $\delta_i, s_i$, and additions. Hence, we can refine our statement: if the elements $\omega_i$ belong to a certain submodule and subsimplicial-like complex $B \subset P$, then also $\omega \in B$.
\end{remark}

\begin{examples}
\begin{itemize}
  \item[(1)] If $B \subset P$ is a dg ideal, then it is a sub-dg-module and a sub-simplical-like complex. For instance, the cocycles of the operad $B = ZP$, or the kernel of a morphism $\rho: P \longrightarrow Q$ of operads $B = \ker\rho$, are submodules and subsimplicial-like complexes.
  \item[(2)] The boundaries of the operad $B = BP$ and the image of a morphism of operads $B = \im\rho$ are sub-dg-modules and subsimplicial complexes.
\end{itemize}
\end{examples}

\begin{lemma}\label{enhancedKan-likeresult} Let $B \subset P$ be a sub-dg-module and a sub-simplicial complex of $P$. Let $\left\{ \omega_i  \right\}_{i = 1, \dots , n}$ be a family of elements in $B(n-1)$ verifying the Kan-like condition. Then, there exists an element $\omega \in B(n)$ such that $\delta_i \omega = \omega_i$ for all $i = 1, \dots , n$.
\end{lemma}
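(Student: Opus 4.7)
The plan is simply to rerun the inductive construction from Lemma~\ref{Kan-likeresult} and observe that it never leaves $B$. Recall that that proof builds $\omega = u_n$ as the last term of a sequence $u_1,\dots,u_n \in P(n)$ defined by
\[
u_1 = s_1\omega_1, \qquad u_r = u_{r-1} + s_r\bigl(-\delta_r u_{r-1} + \omega_r\bigr) \quad (r \geq 2),
\]
where $u_{r-1}$ satisfies $\delta_i u_{r-1} = \omega_i$ for all $i \leq r-1$.

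The key observation is that each step of this recursion uses only three kinds of operations on the $\omega_i$'s and the previously constructed $u_j$'s: face maps $\delta_i$, degeneracy maps $s_i$, and $\mk$-linear combinations. Since $B \subset P$ is assumed to be a submodule (so closed under $\mk$-linear combinations) and a subsimplicial subcomplex (so closed under every $\delta_i$ and $s_i$), an easy induction on $r$ shows that $u_r \in B(n)$ whenever all $\omega_i \in B(n-1)$: indeed $u_1 = s_1 \omega_1 \in B(n)$, and assuming $u_{r-1} \in B(n)$, the element $-\delta_r u_{r-1} + \omega_r$ lies in $B(n-1)$, hence $s_r(-\delta_r u_{r-1} + \omega_r) \in B(n)$, and finally $u_r \in B(n)$ by closure under addition. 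Taking $\omega = u_n$ yields the sought element, which by Lemma~\ref{Kan-likeresult} already satisfies $\delta_i \omega = \omega_i$ for all $i=1,\dots,n$.

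There is essentially no obstacle here beyond checking the stability of $B$ under the three basic operations, which is built into the hypothesis. In the applications in the body of the paper, the relevant choices of $B$ are $ZP$, $BP$, $\ker\rho$ and $\operatorname{im}\rho$ (and intersections thereof, such as $ZP \cap \ker\rho$), all of which are ideals or images/kernels of operad morphisms; for each of these, closure under $\delta_i$, $s_i$ and additions is immediate from the fact that these operations are given by partial composition with $1$ and $m_2$, which commute with differentials (see Remark~\ref{differentials}) and with operad morphisms preserving the unitary multiplication.
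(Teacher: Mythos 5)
Your proof is correct and follows exactly the paper's own reasoning: the paper derives this lemma from the remark that the inductive construction in Lemma~\ref{Kan-likeresult} only uses the face maps $\delta_i$, the degeneracies $s_i$, and additions, hence never leaves a submodule and subsimplicial complex $B$. Your explicit induction on $r$ showing $u_r \in B(n)$ is just a slightly more detailed write-up of the same argument.
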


\subsubsection*{Acknowledgments} I thank Daniel Tanré for pointing to the book \cite{Fre17a} and \cite{Fre17b}, which has been essential for the writing of this paper, and also for his pertinent questions about example \ref{nounitexample}, which forced me to rewrite it in order to make it more readable than the original versions. Joana Cirici taught me how to streamline the basic homotopy theory of almost everything while writing our \cite{CR19}, which has been helpful in section 3 of this paper. Fernando Muro pointed out that in the first appearance of up-to-homotopy things, they already had \emph{strict} units. I'm also grateful to Andrew Tonks and Imma Gálvez for listening to my explanations of some early draft versions of this paper.

\linespread{1}
\bibliographystyle{amsalpha}
\bibliography{bibliografia}
\mbox{}\\
\linespread{1.2}

\end{document}